\newcommand{\End}{\mathop{\mathrm{End}}}
\newcommand{\skel}[1]{^{(#1)}}
\newcommand{\dom}{\mathop{\boldsymbol d}}
\newcommand{\ran}{\mathop{\boldsymbol r}}
\newcommand{\supp}{\mathop{\mathrm{supp}}}
\newcommand{\J}{\mathrel{\mathscr J}} 
\newcommand{\R}{\mathrel{\mathscr R}} 
\newcommand{\eL}{\mathrel{\mathscr L}} 
\newcommand{\inv}{^{-1}}
\newcommand{\p}{\varphi}
\newcommand{\ov}[1]{\ensuremath{\overline {#1}}}
\newcommand{\wh}{\widehat}
\newtheorem{Thm}{Theorem}[section]
\newtheorem{Prop}[Thm]{Proposition}
\newtheorem{Lemma}[Thm]{Lemma}
{\theoremstyle{definition}
}
{\theoremstyle{remark}
\newtheorem{Rmk}[Thm]{Remark}}
\newtheorem{Cor}[Thm]{Corollary}
{\theoremstyle{remark}
}
{\theoremstyle{remark}
}
\theoremstyle{remark}
\theoremstyle{remark}
\theoremstyle{remark}
\theoremstyle{remark}
\newtheorem*{Claim*}{Claim}}
\newtheorem*{Thm*}{Theorem}
\numberwithin{equation}{section}
\title[Stable finiteness of ample groupoid algebras]{Stable finiteness of ample groupoid algebras, traces and applications}
\author{Benjamin Steinberg}
\address[B.~Steinberg]{%
    Department of Mathematics\\
    City College of New York\\
    Convent Avenue at 138th Street\\
    New York, New York 10031\\
    USA}
\email{bsteinberg@ccny.cuny.edu}
\thanks{The author was supported by a PSC CUNY grant, a Simons Foundation Collaboration Grant, award number 849561, the Australian Research Council Grant DP230103184 and Marsden Fund Grant MFP-VUW2411}
\date{June 3, 2025}
\keywords{Stable finiteness, ample groupoid, inverse semigroup, Leavitt path algebra, traces}
\subjclass[2020]{20M25, 16S88, 22A22}
\begin{document}

\begin{abstract}
In this paper we study stable finiteness of ample groupoid algebras with applications to inverse semigroup algebras and Leavitt path algebras, recovering old results and proving some new ones.  In addition, we develop a theory of (faithful) traces on ample groupoid algebras, mimicking the $C^\ast$-algebra theory but taking advantage of the fact that our functions are simple and so do not have integrability issues, even in the non-Hausdorff setting.  The theory of traces is closely connected with the theory of invariant means on Boolean inverse semigroups.  It turns out that for Hausdorff ample groupoids with compact unit space, having a stably finite algebra over some commutative ring implies the existence of a tracial state on its reduced $C^*$-algebra.
We include an appendix on stable finiteness of more general semigroup algebras, improving on an earlier result of Munn, which is independent of the rest of the paper.
\end{abstract}
\maketitle

\section{Introduction}
A ring $R$ (not necessarily unital) is Dedekind finite if every left invertible element of a corner in $R$ is also right invertible; it is stably finite if each matrix ring $M_n(R)$ with $n\geq 1$ is Dedekind finite.   Stable finiteness plays an important role in $C^*$-algebra theory, particularly in the classification program of Elliott. It has also long been important in ring theory including, for instance, in the foundational work of Kaplansky~\cite{Kaplanskydirect} on stable finiteness of group rings; see also~\cite{Goodearlreg} for the role of stable finiteness in the theory of von Neumann regular rings.  For example, the $K_0$-group of a stably finite unital ring (or $C^*$-algebra) is a partially ordered abelian group with strong order unit~\cite{GoodearlHandelman}.  A fundamental result says that a stably finite unital $C^*$-algebra admits a quasitrace~\cite{quasitrace}.  One can view a result of Goodearl and Handelman~\cite{GoodearlHandelman} concerning the existence of (pseudo)rank functions on stably finite von Neumann regular rings as an analogue to the existence of quasitraces on stably finite $C^*$-algebras.  Stable finiteness also plays a role in Cohn's work on rank functions for general rings~\cite{Cohnrankfunc}, although the analogy with quasitraces is less direct here.  Initially, in pure ring theory, stable finiteness was viewed as a strengthening of the IBN (invariant basis number) property; see, for example, the discussion in~\cite[Chapter~1]{Lam2}.  A unital ring $R$ has the IBN  if the rank of a free module is well defined.  For example, the Leavitt path algebra associated to a graph with a single vertex and  $n$ edges has the free module of rank $1$ isomorphic to the free module of rank $n$~\cite{LeavittBook}, and hence does not have IBN.

This paper is concerned with the stable finiteness of algebras of ample groupoids over commutative rings with unit.  These algebras include group algebras, Leavitt path algebras~\cite{LeavittBook} and inverse semigroup algebras~\cite{mygroupoidalgebra}.  I was primarily interested in the question of whether an inverse semigroup algebra $KS$ over a field $K$ is stably finite if and only if $S$ contains no infinite idempotents (i.e., does not contain a copy of the bicyclic monoid).  As far as I know, this is an open question, even over fields of characteristic $0$.

Kaplansky~\cite{Kaplanskydirect} famously showed that a group algebra $KG$ over a field of characteristic $0$ is stably finite.  His approach was analytic.  The whole problem reduces to the complex group algebra $\mathbb CG$ which embeds in its group von Neumann algebra, which has a faithful trace.  Kaplansky observed that a $C^\ast$-algebra with a faithful trace is stably finite and hence all its subrings are stably finite. Later, Passman~\cite{Passmanidem} gave a proof staying within $\mathbb CG$, but using metric properties of $\mathbb C$; see also his book~\cite{PassmanBook}.  It is an open question, known as Kaplansky's direct finiteness conjecture,  whether group algebras over fields of positive characteristic are stably finite.  Kaplansky's conjecture is known to be true for sofic groups~\cite{soficgroup} (which might possibly include all groups), and  this is often used as a selling point for the theory of sofic groups.

Easdown and Munn~\cite{EasdownMunn} and Crabb and Munn~\cite{CrabbMunn} studied the existence of faithful traces on inverse semigroups algebras but did not address the issue of stable finiteness in their papers.  Inverse semigroup algebras are isomorphic to ample groupoid algebras and there is a theory on how to build traces on $C^\ast$-algebras of such groupoids, at least when the groupoids are Hausdorff (and have a compact unit space).  Since groupoids associated to inverse semigroups do not always have Hausdorff groupoids or compact unit spaces, we cannot just work in this setting.  But the functions in the complex algebra of an ample groupoid always have finite images and their support intersects the unit space in a Borel set,  and so it turns out that there is no problem, even in the non-Hausdorff case, to integrate them against an invariant measure on the unit space to get a trace. The criterion for faithfulness of the trace does become more complicated in the non-Hausdorff case but the difficulties are not insurmountable.   The argument of Passman can then be generalized to get stable finiteness in this setting without passing to the $C^*$-algebra.    This approach allows one to prove stable finiteness for $KS$ whenever $S$ is an inverse semigroup with the property that each $\mathscr D$-class contains only finitely many idempotents and $K$ is a field of characteristic zero.  However, I then discovered that this last result is implicit in an old paper of Munn~\cite{MunnDirect} (who proves Dedekind finiteness or direct finiteness of these types of inverse semigroup algebras).  I initially missed Munn's paper because of the many differing terminologies for the notion of Dedekind finiteness.  Munn's proof reduces things to Kaplansky's theorem.  He also proves under the above hypotheses that if the maximal subgroups of $S$ are abelian then $KS$ is stably finite over any field.  His paper works with more general semigroups than just inverse semigroups. Nonetheless, I believe the results on traces, which make up the second half of this paper, are of interest in their own right.  They also turn out to be connected with the purely algebraic results via ordered $K$-theory.

Inspired by (and improving upon) Munn's idea, I came up with a sufficient condition for stable finiteness of ample groupoid algebras over arbitrary commutative rings with unit in terms of the orbit structure and the stable finiteness of group algebras of certain isotropy groups.  In particular, I was able to improve on Munn's result and show that if $S$ is an inverse semigroup such that each $\mathscr D$-class has finitely many idempotents and $K$ is any commutative ring with unit, then $KS$ is stably finite if and only if $KG$ is stably finite for each maximal subgroup $G$ of $S$.  Applying my results to the groupoids associated to Leavitt path algebras I was able to recover, and generalize, Va\v{s}'s characterization of stably finite Leavitt path algebras~\cite{Vasdirect} (in fact, like Munn, she only considered Dedekind finiteness):
\begin{Thm*}
Let $K$ be a commutative ring with unit and $E$ a graph.  Then  $L_K(E)$ is stably finite if and only if no cycle in $E$ has an exit.  
\end{Thm*}

The main result on stable finiteness, which I'll formulate here only for Hausdorff groupoids (the correct statement for non-Hausdorff groupoids is more technical), says that if $\mathscr G$ is an ample groupoid and $K$ is a commutative ring with unit such that there is a dense invariant subset $X$ of the unit space $\mathscr G\skel 0$ such that each orbit of $X$ is closed and discrete in the induced topology, and the algebra of each isotropy group from $X$ over $K$ is stably finite, then the algebra of $\mathscr G$ over $K$ is stably finite.  The typical case is when the orbits in $X$ are all finite (i.e., there is dense set of periodic orbits with nice isotropy groups).

The purely algebraic results are in turn linked with the analytic results concerning traces. If an ample groupoid with compact unit space has a stably finite algebra over some commutative ring, then its complex algebra admits a normalized trace.  If, in addition, the groupoid is Hausdorff, then the reduced $C^*$-algebra admits a tracial state.  This can be viewed as an analogue for ample groupoid algebras of the existence of quasitraces for stably finite unital $C^*$-algebras.  I don't \textit{a priori} see any reason that stable finiteness over some commutative ring (e.g., a finite field) would imply stable finiteness over $\mathbb C$ (although stable finiteness over every finite field does, cf.~Proposition~\ref{p:malcev.idea}), but nonetheless stable finiteness over some ring is reflected in the complex algebra by the existence of this trace. This result uses that $K_0$ is partially ordered with a strong order unit for stably finite rings and results of Goodearl and Handelman~\cite{GoodearlHandelman} on the existence of states on such groups.

The paper is organized as follows.  There is a preliminary section on groupoids and inverse semigroups, followed by a section on Dedekind finite and stably finite rings.  Then the sufficient condition for stable finiteness of an ample groupoid algebra is given, followed by the applications to inverse semigroup algebras and Leavitt path algebras.  As an intermediate step relativized Cohn path algebras~\cite{LeavittBook} are realized as ample groupoid algebras with a different proof than is typically found even for the case of Leavitt path algebras.

 The second half of the paper is devoted to traces on ample groupoid algebras.  A preliminary section concerns traces and norms on $\ast$-algebras and provides a fairly general setting under which a faithful trace on a $\ast$-algebra will imply stable finiteness; the key idea comes from Passman's work~\cite{Passmanidem}.  I then discuss the connection between traces on ample groupoid algebras and invariant means~\cite{lawsonetalmean} on the unit space (already explored in the Hausdorff and unital $C^*$-context in~\cite{starlingmean} in the language of Boolean inverse monoids).  I introduce the notion of a strongly faithful invariant mean (which is the same as a faithful invariant mean for Hausdorff groupoids) and show that the existence of a strongly faithful invariant mean on an ample groupoid yields a faithful trace and stable finiteness.  For Hausdorff groupoids, existence of a faithful trace provides a strongly faithful invariant mean but it is not clear if that remains true in the non-Hausdorff case.  As a consequence, an alternate proof of the results of the first half of the paper is obtained over fields of characteristic $0$ that is more analytic in flavor and subsumes Kaplansky's result, rather than relies on it as the first half does.  In this section, I also establish that, for an ample groupoid with compact unit space, if its algebra is stably finite over some commutative ring, then its complex algebra admits a normalized trace.  If, in addition, the groupoid is Hausdorff, then its reduced $C^*$-algebra admits a tracial state.

The paper ends with an appendix that considers stable finiteness of semigroup algebras over unital rings (not necessarily commutative) and improves on Munn's result~\cite{MunnDirect}.  It recovers the earlier results for inverse semigroups using purely structural semigroup theoretic methods (like Sch\"utzenberger representations) rather than groupoids.  This appendix assume familiarity with Green's relations and other technical notions from semigroup theory.

\subsection*{Acknowledgements}
I would like to thank the referee for their careful reading of this paper and a number of helpful remarks.

\section{Groupoids and inverse semigroups}
This section contains preliminaries on groupoids and inverse semigroups.

\subsection{Inverse semigroups}
The reader is referred to~\cite{Lawson} for details on all the facts that we recall here about inverse semigroups.
An \emph{inverse semigroup} is a semigroup $S$ such that, for all $s\in S$, there is a unique $s^\ast\in S$ with $ss^\ast s=s$ and $s^\ast ss^\ast=s^\ast$.   Note that $\ast$ is an involution: $(s^\ast)^\ast=s$ and $(st)^\ast =t^\ast s^\ast$.  Thus inverse semigroups are $\ast$-semigroups where we recall that a \emph{$\ast$-semigroup} is a semigroup with an involution $\ast$ satisfying these latter two properties.
 Inverse semigroups include groups and meet semilattices.  An  important example is the inverse semigroup $I(X)$ of all partial homeomorphisms of a topological space $X$.  It consists of all homeomorphisms $f\colon Y\to Z$ between open subsets  $Y,Z\subseteq X$ under the binary operation of composition of partial mappings.  The involution is just the usual inverse homeomorphism.  Every inverse semigroup $S$ can be embedded in $I(S)$ where $S$ is endowed with the discrete topology (this is called the Preston-Wagner theorem).

The idempotents of an inverse semigroup commute and form a meet semilattice under the ordering $e\leq f$ if $ef=e=fe$.  The set of idempotents is denoted $E(S)$.  This order extends to a compatible ordering on the inverse semigroup $S$, called the \emph{natural partial ordering}, by putting $s\leq t$ if there is an idempotent $e\in E(S)$ with $s=te$ or, equivalently, if $s=ft$ for some $f\in E(S)$.  One can always choose $e=s^\ast s$ and $f=ss^\ast$.  Note that if $e\in E(S)$, then so is $ses^\ast$ for all $s\in S$.  In $I(X)$, one has $f\leq g$ if and only if $f$ is a restriction of $g$, and so you should think of the ordering as an algebraic encoding of restriction.

Homomorphisms of inverse semigroups are just semigroup homo\-mor\-phis\-ms.  They automatically preserve the natural partial order and the involution.

Two idempotents $e,f$ of an inverse semigroup $S$ are \emph{$\mathscr D$-equivalent} if there is $s\in S$ with $s^\ast s=e$ and $ss^\ast =f$.  The set of elements $s\in S$ with $ss^\ast=f$ is called the \emph{$\mathscr R$-class} $R_f$ of $f$.  If $R_f$ is finite, then there can only be finitely many idempotents $\mathscr D$-equivalent to $f$.  The \emph{maximal subgroup} $G_e$ at an idempotent $e$ is the group of all elements $s\in S$ with $ss^\ast=e=s^\ast s$.  It is the group of units of $eSe$.

An important inverse semigroup for this paper is the \emph{bicyclic monoid} $B$~\cite{CP}.  As a monoid, it has presentation $\langle a,b\mid ab=1\rangle$.  Note that $a=b^\ast$, and so it can be presented as an inverse monoid (or even as a $\ast$-semigroup) by $\langle b\mid b^\ast b=1\rangle$.  Every proper quotient of $B$ is a group and so if $x,y$ are two elements of a monoid $M$ with $xy=1$ and $yx\neq 1$, then $x,y$ generate a copy of the bicyclic monoid.  Each element of $B$ can be written uniquely in the form $b^ma^n$ with $m,n\geq 0$.  The idempotents are of the form $e_n=b^na^n$ and they form a descending chain $1=e_0>e_1>\cdots$.    All elements of $B$ belong to a single $\mathscr D$-class with a trivial maximal subgroup.  See~\cite{CP} for details.

If $R$ is a unital ring and $S$ is a semigroup, then $RS$ denotes the usual semigroup algebra of $S$;  so it has $R$-basis $S$ and product \[\sum_{s\in S}r_ss\cdot \sum_{t\in S}r'_tt=\sum_{s,t\in S}r_sr'_tst.\]  Note that if $S$ has a zero element, we do not identify the zeroes of $S$ and $RS$.

\subsection{Groupoids}

A \emph{groupoid} $\mathscr G$ is a small category in which every arrow is an isomorphism.  For example, a group is the same thing as a groupoid with a single object.  We shall write $\mathscr G$ for the set of arrows of the groupoid and identify each object with the identity arrow at that object.  The set of identities is called the \emph{unit space} and is denoted $\mathscr G\skel 0$. From this viewpoint, the domain map $\dom$ has the form $\dom(\gamma)=\gamma\inv \gamma$ and the range map $\ran$ has the form $\ran(\gamma)=\gamma\gamma\inv$.  However, at times it will be convenient to just write $\gamma\colon x\to y$ to mean $\gamma$ is an arrow from $x$ to $y$ without worrying about whether we view $x,y$ as objects or identity arrows.  This viewpoint on groupoids is not common in category theory but is prevalent in analysis and is convenient when defining function algebras on groupoids.  Note that $\mathscr G\skel 0$ is a subgroupoid of $\mathscr G$.

A \emph{topological groupoid} is a groupoid $\mathscr G$ endowed with a topology for which the multiplication map and the inversion map are continuous (where the space of composable pairs is topologized as a subspace of $\mathscr G\times \mathscr G$).  We equip $\mathscr G\skel 0$ with the subspace topology and note that $\dom$ and $\ran$ are continuous.   A topological groupoid is \emph{\'etale} if $\dom$ is a local homeomorphism.  This is well known to be equivalent to $\ran$ being a local homeomorphism, and also to the multiplication map being a local homeomorphism, cf.~\cite{resendeetale}.  In an \'etale groupoid $\mathscr G\skel 0$ is open.  Good references on \'etale groupoids are~\cite{Exel,resendeetale,Paterson}.

A (local) \emph{bisection} of a topological groupoid $\mathscr G$ is an open subset $U\subseteq\mathscr G$ such that each of $\dom|_U$ and $\ran|_U$ is a homeomorphism of $U$ with its image.  For an \'etale groupoid,  $\dom$ and $\ran$ are open, and so $U$ is a bisection if and only if $\dom|_U$ and $\ran|_U$ are injective in this case.  Some authors do not require bisections to be open and so we often add the word open for emphasis.  A topological groupoid is \'etale if and only if the bisections form a basis for the topology~\cite{resendeetale}.  The set $\Gamma(\mathscr G)$ of open bisections of an \'etale groupoid $\mathscr G$ is an inverse monoid under setwise product $UV=\{\alpha\beta\mid \alpha\in U,\beta\in V\}$ with the inverse of $U$ being taken setwise, that is, $U^\ast =U\inv = \{\gamma\inv\mid \gamma\in U\}$.  Note that if $U\in \Gamma(\mathscr G)$, then $\dom(U)=U\inv U$ and $\ran(U)=UU\inv$.  In this paper all \'etale groupoids are assumed to have locally compact Hausdorff unit spaces, but $\mathscr G$ itself is not required to be Hausdorff.  It is well known that an \'etale groupoid (in this sense) is Hausdorff if and only if $\mathscr G\skel 0$ is closed.

Following the terminology of Paterson~\cite{Paterson}, an \emph{ample groupoid} is an \'etale groupoid $\mathscr G$ whose unit space is Hausdorff and has a basis of compact open sets; again we do not require $\mathscr G$ to be Hausdorff.  We denote by $\Gamma_c(\mathscr G)$ the set of compact open bisections of $\mathscr G$; it is an inverse subsemigroup of $\Gamma(\mathscr G)$.   A groupoid is ample if and only if it has a Hausdorff unit space and a basis of compact open bisections.

If $x\in \mathscr G\skel 0$, then the \emph{orbit}  of $x$ is the set of all $y\in \mathscr G\skel 0$ such that there is an arrow $\gamma\colon x\to y$.  The orbits form a partition of $\mathscr G\skel 0$.  A subset of $\mathscr G\skel 0$ is called \emph{invariant} if it is a union of orbits.  If $X\subseteq \mathscr G\skel 0$ is invariant, then $\mathscr G|_X$ is the subgroupoid consisting of all arrows beginning (and hence ending) in $X$.  If $X$ is either open or closed, then $\mathscr G|_X$ is ample whenever $\mathscr G$ is ample.

The \emph{isotropy group} at $x\in \mathscr G\skel 0$ is the group of all arrows $\gamma\colon x\to x$.  All isotropy groups at elements in the same orbit are isomorphic.

A groupoid $\mathscr G$ is called a \emph{group bundle} if all its elements are isotropy, that is $\dom(\gamma)=\ran(\gamma)$ for all $\gamma\in \mathscr G$.

If $K$ is a commutative ring with unit and $\mathscr G$ an ample groupoid,  the algebra of $\mathscr G$ over $K$, denoted $K\mathscr G$, is the $K$-span of the indicator functions $1_U$ with $U\in \Gamma_c(\mathscr G)$ with the convolution product
\[f\ast g(\gamma) = \sum_{\alpha\beta=\gamma}f(\alpha)g(\beta);\] these algebras were first studied systematically in~\cite{mygroupoidarxiv,mygroupoidalgebra}.  When $\mathscr G$ is Hausdorff, $K\mathscr G$ consists of the locally constant mappings $f\colon \mathscr G\to K$ with compact support, but in the non-Hausdoff case things are more subtle.  One has that $1_U\ast 1_V = 1_{UV}$, and so $K\mathscr G$ is a quotient of the semigroup algebra $K\Gamma_c(\mathscr G)$. In fact, it is the quotient of $K\Gamma_c(\mathscr G)$ by the ideal generated by elements of the form $1_{U\cup V}-1_U-1_V$ with $U,V\subseteq \mathscr G\skel 0$ disjoint compact open subsets (this was first proved in~\cite{mygroupoidarxiv} for the Hausdorff case and in an unpublished note by Buss and Meyers in the general case; see~\cite{simplicity} for a proof).

If $\mathscr G$ is an ample groupoid, $U$ is an invariant open subset of $\mathscr G\skel 0$ and $X=\mathscr G\skel 0\setminus U$, then $K\mathscr G|_U$ is a two-sided ideal in $K\mathscr G$ and $K\mathscr G/K\mathscr G|_U\cong K\mathscr G|_{X}$; see~\cite{gcrccr} for details.

\subsection{The universal groupoid of an inverse semigroup}
An action of an inverse semigroup $S$ on a locally compact space $X$ is a homomorphism $\beta\colon S\to I(X)$. We assume that the action is nondegenerate, meaning that the domains of the elements of $\beta(S)$ cover $X$. We write $\beta_s$ for $\beta(s)$ and write $\exists \beta_s(x)$ to mean that $\beta_s(x)$ is defined.  The \emph{groupoid of germs} $\mathscr G=S\ltimes X$  of the action is $\{(s,x)\mid \exists \beta_s(x)\}/{\sim}$ where $(s,x)\sim (t,y)$ if $x=y$ and there exists $u\leq s,t$ such that $\exists \beta_u(x)$.  We write $[s,x]$ for the class of $(s,x)$ (which is sometimes referred to as the \emph{germ} of $s$ at $x$).  Notice that if $[s,x]=[t,x]$, then $\beta_s(x)=\beta_t(x)$.  The product $[s,x][t,y]$ is defined if and only if $x=\beta_t(y)$, in which case it is $[st,y]$.   It is straightforward to verify that $\mathscr G$ is a groupoid and the identities are the elements of the form $[e,x]$ with $e\in E(S)$ and $\exists \beta_e(x)$.  Moreover, $[e,x]\mapsto x$ gives a bijection between $\mathscr G\skel 0$ and $X$.  Identifying $\mathscr G\skel 0$ with $X$, one has $\dom([s,x])=x$ and $\ran([s,x])=\beta_s(x)$.  In particular, $[s,x]$ is in the isotropy group at $x$ if and only if $\beta_s(x)=x$, i.e., $s$ fixes $x$.  A topology is given on $\mathscr G$ by taking as a basis sets of the form $(s,U) = \{[s,x]\mid x\in U\}$ where $U$ is an open subset of the domain of $\beta_s$.  With this topology, $\mathscr G$ is a topological groupoid.    The sets $(s,U)$ are bisections, and so the groupoid $\mathscr G$ is \'etale with unit space homeomorphic to $X$.  In particular, if $X$ has a basis of compact open sets, then $\mathscr G$ is ample.

If $S$ is an inverse semigroup, then its universal groupoid $\mathscr G(S)$  (due to Paterson~\cite{Paterson}) is the groupoid of germs for the action of $S$ on the spectrum of its semilattice of idempotents $\widehat{E(S)}$. Here $\widehat{E(S)}$ is the set of all nonzero homomorphisms (characters) $\theta\colon E(S)\to \{0,1\}$ where $\{0,1\}$ is a semigroup under multiplication.  The space $\widehat{E(S)}$ is topologized as a subspace of $\{0,1\}^{E(S)}$ and it has a basis of compact open sets.   If $e\in E(S)$, then $D(e) = \{\theta\in \widehat{E(S)}\mid \theta(e)=1\}$ is a compact open set, and the subsets  of the form $D(e)\setminus (D(f_1)\cup\cdots \cup D(f_n))$ with $f_1,\ldots, f_n\leq e$ form a subbasis for the topology consisting of compact open sets.  The action of $S$ on $\widehat{E(S)}$ is given by $\beta_s\colon D(s^\ast s)\to D(ss^\ast)$ with $\beta_s(\theta)(e) = \theta(s^\ast es)$.    Note that the orbits of $\mathscr G(S)$ are just the orbits of $S$ on $\widehat{E(S)}$ under the identification of $\mathscr G(S)\skel 0$ with $\widehat{E(S)}$.

The universal groupoid $\mathscr G(S)$ is Hausdorff if and only if, for each $s\in S$, there is a finite set $e_1,\ldots, e_n$ of idempotents with $e_i\leq s$, for $i=1,\ldots, n$, such that if $f\in E(S)$ with $f\leq s$, then $f\leq e_i$ for some $i$; see~\cite[Theorem~5.17]{mygroupoidalgebra}.

 The \emph{principal character} $\theta_e$ associated to an idempotent $e\in E(S)$ is defined by
\[\theta_e(f) =\begin{cases} 1, & f\geq e\\ 0, & \text{else.}\end{cases}\]  Of course, $\theta_e=\theta_f$ if and only if $e=f$.
  The orbit of a principal character associated to an idempotent $e$ under $S$ consists precisely of the principal characters of the idempotents in the $\mathscr D$-class of $e$ and the isotropy group at $\theta_e$ is the maximal subgroup $G_e$ of $S$ at $e$. In fact, the full subgroupoid with object set the principal characters is isomorphic as a groupoid~\cite[Lemma~5.16]{mygroupoidalgebra} to the underlying groupoid of the inductive groupoid of the inverse semigroup discussed in~\cite{Lawson}, from which these facts follow.

The following isomorphism is established in~\cite[Theorem~6.3]{mygroupoidalgebra}.

\begin{Thm}
If $S$ is an inverse semigroup, then $KS\cong K\mathscr G(S)$ for any commutative ring with unit $K$.
\end{Thm}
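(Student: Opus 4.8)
The plan is to construct an explicit $K$-algebra isomorphism $\varphi\colon KS\to K\mathscr G(S)$ sending a basis element $s\in S$ to the indicator function $1_{(s,D(s^\ast s))}$ of the basic compact open bisection $(s,D(s^\ast s))$, and extending $K$-linearly. First I would verify that $\varphi$ is multiplicative on the basis. Since $1_U\ast 1_V=1_{UV}$, it suffices to compute the setwise product $(s,D(s^\ast s))\cdot (t,D(t^\ast t))$. A product of germs $[s,\theta_1][t,\theta_2]$ is defined exactly when $\theta_1=\beta_t(\theta_2)$, in which case it equals $[st,\theta_2]$; since $\beta_t(\theta_2)(s^\ast s)=\theta_2(t^\ast s^\ast s t)$, one has $\beta_t(\theta_2)\in D(s^\ast s)$ iff $\theta_2\in D(t^\ast s^\ast s t)=D\big((st)^\ast (st)\big)$. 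Hence the product bisection is exactly $(st,D((st)^\ast(st)))$, giving $\varphi(s)\ast\varphi(t)=\varphi(st)$. In particular $\varphi(e)=1_{D(e)}$ for $e\in E(S)$, where $D(e)$ is viewed as a clopen subset of the unit space.

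For surjectivity I would reduce an arbitrary $1_U$, with $U$ a compact open bisection, to these generators. Covering $U$ by basic bisections and disjointifying inside $U$ (where, via $\dom$, the compact open subsets form a Boolean algebra because the unit space is Hausdorff), it suffices to hit each $1_{(s,V)}$ with $V\subseteq D(s^\ast s)$ compact open. Writing $1_{(s,V)}=\varphi(s)\ast 1_V$, where $1_V\in K\mathscr G(S)$ is supported on the unit space, and expressing $1_V$ by inclusion–exclusion as a $\mathbb Z$-linear combination of the $1_{D(e)}=\varphi(e)$ (using $D(e)\cap D(f)=D(ef)$), one obtains $1_{(s,V)}\in\varphi(KS)$ because $\varphi(s)\ast\varphi(e)=\varphi(se)$. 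Thus $\varphi$ is onto.

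The main obstacle is injectivity, and this is where the principal characters do the work. Suppose $\sum_{s\in F}c_s 1_{(s,D(s^\ast s))}=0$ as a function on $\mathscr G(S)$, with $F$ finite and all $c_s\ne 0$. For a fixed $s_0\in F$ set $e_0=s_0^\ast s_0$ and evaluate at the germ $[s_0,\theta_{e_0}]$. Since germs have a well-defined domain, membership $[s_0,\theta_{e_0}]\in (s,D(s^\ast s))$ forces the second coordinate to be $\theta_{e_0}$, so the only contributing terms are those $s$ with $[s,\theta_{e_0}]=[s_0,\theta_{e_0}]$. Unwinding the germ relation, such an equality requires some $u\le s,s_0$ with $u^\ast u\ge e_0$; but $u\le s_0$ gives $u^\ast u\le s_0^\ast s_0=e_0$, whence $u^\ast u=e_0$ and $u=s_0$, so the condition is equivalent to $s\ge s_0$. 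Evaluation therefore yields $\sum_{s\in F,\ s\ge s_0}c_s=0$ for every $s_0\in F$. Choosing $s_0$ maximal in $F$ with respect to the natural partial order, the only surviving term is $c_{s_0}$, so $c_{s_0}=0$, contradicting $s_0\in F$. Hence $F=\varnothing$ and $\varphi$ is injective, completing the isomorphism. The one point requiring care throughout is the non-Hausdorff setting, but since the elements of $K\mathscr G(S)$ are genuine $K$-valued functions on $\mathscr G(S)$, pointwise evaluation remains a well-defined linear functional and the argument goes through unchanged.
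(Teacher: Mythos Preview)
The paper does not actually prove this theorem: it is stated as a background result with the sentence ``One of the main results of~\cite{mygroupoidalgebra} establishes the following isomorphism,'' and no argument is given. So there is no in-paper proof to compare against.

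That said, your argument is correct and self-contained. The multiplicativity computation is right: the key identity $(s,D(s^\ast s))\cdot(t,D(t^\ast t))=(st,D((st)^\ast(st)))$ follows exactly as you describe, using $D((st)^\ast(st))\subseteq D(t^\ast t)$. Your surjectivity step is also fine; the disjointification works because $\dom$ identifies compact open subsets of a bisection with compact open subsets of the Hausdorff unit space, and any compact open subset of $\widehat{E(S)}$ has indicator function expressible as a $\mathbb Z$-linear combination of the $1_{D(e)}$ via inclusion--exclusion (since $D(e)\cap D(f)=D(ef)$ and the basic compact opens are of the form $D(e)\setminus\bigcup_i D(f_i)$). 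The injectivity argument using principal characters is the clean part: your reduction of $[s,\theta_{e_0}]=[s_0,\theta_{e_0}]$ to $s\geq s_0$ is exactly right, since $u\leq s_0$ forces $u^\ast u\leq e_0$, and combined with $u^\ast u\geq e_0$ you get $u=s_0e_0=s_0$. Picking $s_0$ maximal in $F$ then kills the sum. This use of principal characters is in the same spirit as the paper's later use of them to establish $K$-density (Proposition~\ref{p:prin.k.dense}), and is essentially the injectivity argument one finds in the cited source.
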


\section{Dedekind finite and stably finite rings}
In this paper, rings are not assumed to have identities.
An idempotent $e$ of a ring $R$ is \emph{finite} if every right invertible element of $eRe$ is left invertible.   Trivially, $0$ is a finite idempotent.  The following is well known.

\begin{Prop}\label{p:below.finite}
Let $e\in R$ be a finite idempotent and $f\in eRe$ an idempotent.  Then $f$ is finite.
\end{Prop}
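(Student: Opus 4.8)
The plan is to reduce the finiteness of $f$ to the finiteness of $e$ by ``padding'' elements of the corner $fRf$ with the complementary idempotent $e-f$ so that they become elements of $eRe$, and then transporting invertibility back and forth between the two corners.

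First I would record the basic relations between $e$ and $f$. Since $f\in eRe$ is idempotent, writing $f=eae$ gives $ef=f=fe$, so $e-f$ is an idempotent orthogonal to $f$, that is, $f(e-f)=0=(e-f)f$, and $f+(e-f)=e$. Consequently $fRf$ is a subring of $eRe$ whose identity element is $f$, and every $x\in fRf$ satisfies $fx=x=xf$. Note that $e$ plays the role of a local identity here; because $R$ need not be unital, we cannot pad with $1-f$ and must use $e-f$ instead.

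Next, suppose $x\in fRf$ is right invertible in $fRf$, say $xy=f$ with $y\in fRf$. Set $u=x+(e-f)$ and $v=y+(e-f)$, both lying in $eRe$. Expanding $uv$ and using the orthogonality relations to kill the cross terms $x(e-f)$ and $(e-f)y$, together with $(e-f)^2=e-f$ and $xy=f$, I would obtain $uv=f+(e-f)=e$. Thus $u$ is right invertible in the corner $eRe$, whose identity is $e$. Now I would invoke the finiteness of $e$: the right invertible element $u$ of $eRe$ must be left invertible, so there is $w\in eRe$ with $wu=e$. To return to $fRf$, set $z=fwf\in fRf$. Since $uf=x$ (the term $(e-f)f$ vanishes) and $fx=x$, I would check that $zx=fwfx=fw(uf)=f(wu)f=fef=f$, so $z$ is a left inverse of $x$ in $fRf$. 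Hence every right invertible element of $fRf$ is left invertible, i.e.\ $f$ is finite.

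The argument is essentially a bookkeeping exercise, so I do not expect a serious obstacle; the only care needed is to ensure that all computations stay inside the corner $eRe$ and that the orthogonality of $f$ and $e-f$ is used to discard exactly the right cross terms. The one conceptual point to keep in mind is the non-unital setting, which forces the use of $e-f$ as the complementary idempotent rather than $1-f$, and the fact that ``invertible'' is always understood relative to the local units $e$ and $f$ of the respective corners.
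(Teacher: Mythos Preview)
Your proof is correct and follows essentially the same approach as the paper: pad elements of $fRf$ by the orthogonal idempotent $e-f$ to land in $eRe$, and invoke finiteness of $e$. The paper is slightly slicker in that once $(x+(e-f))(y+(e-f))=e$ and $e$ is finite, it uses the standard fact that a one-sided inverse of a two-sided invertible element is the two-sided inverse, so $(y+(e-f))(x+(e-f))=e$ directly yields $yx=f$; your detour through an arbitrary left inverse $w$ and the projection $z=fwf$ is perfectly valid but unnecessary.
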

\begin{proof}
Let $u,v\in fRf\subseteq eRe$ with $uv=f$.  Then $(u+(e-f))(v+(e-f)) = uv+(e-f)=e$.  Therefore, $u+(e-f)$ is invertible with inverse $v+(e-f)$, and hence $e=(v+(e-f))(u+(e-f)) = vu+e-f$.  Thus we have $vu=f$, as required.
\end{proof}

If all idempotents of $R$ are finite, then $R$ is said to be \emph{Dedekind finite}.  The terms ``von Neumann finite'' and ``directly finite'' are also commonly used. Note that $R$ is Dedekind finite if and only if it does not have a multiplicative subsemigroup isomorphic to the bicyclic monoid.  One says that $R$ is \emph{stably finite} if $M_n(R)$ is Dedekind finite for all $n\geq 1$. A unital ring is Dedekind finite if and only if every left invertible element is right invertible by Proposition~\ref{p:below.finite}.

Of course any subring of a Dedekind finite (respectively, stably finite) ring is Dedekind finite (respectively, stably finite).  A direct limit of Dedekind finite (stably finite) rings is well known to be the same.

\begin{Prop}\label{p:directlimit}
Let $R$ be a ring and suppose that $R=\varinjlim_{\alpha\in D} R_{\alpha}$ is a direct limit of Dedekind finite (respectively, stably finite) rings $R_{\alpha}$.  Then $R$ is Dedekind finite (respectively, stably finite).
\end{Prop}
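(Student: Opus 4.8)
The plan is to reduce the Dedekind finite assertion to the basic fact that an equality in a direct limit already holds at some finite stage, and then to deduce the stably finite assertion from the Dedekind finite one by commuting the matrix functor with the direct limit. Throughout, let $\varphi_\alpha\colon R_\alpha\to R$ denote the canonical maps and $\varphi_{\alpha\beta}\colon R_\alpha\to R_\beta$ the transition homomorphisms for $\alpha\le\beta$, so that $\varphi_\beta\circ\varphi_{\alpha\beta}=\varphi_\alpha$ and $R$ is the union of the images $\varphi_\alpha(R_\alpha)$.

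First, for the Dedekind finite case, I would take an idempotent $e\in R$ and elements $u,v\in eRe$ with $uv=e$, and show $vu=e$. Since $D$ is directed, I can choose a single index $\alpha$ together with elements $\hat e,\hat u,\hat v\in R_\alpha$ whose $\varphi_\alpha$-images are $e,u,v$. The data $(e,u,v)$ satisfies the finite list of relations $e^2=e$, $eu=ue=u$, $ev=ve=v$ and $uv=e$ in $R$; each side of each relation is the $\varphi_\alpha$-image of an explicit word in $\hat e,\hat u,\hat v$. Because two elements of $\varinjlim R_\alpha$ are equal precisely when their representatives agree after some transition map, and there are only finitely many relations, directedness of $D$ lets me pass to a single index $\beta\ge\alpha$ at which all of them hold simultaneously. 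Writing $e'=\varphi_{\alpha\beta}(\hat e)$, $u'=\varphi_{\alpha\beta}(\hat u)$, $v'=\varphi_{\alpha\beta}(\hat v)$, I obtain in $R_\beta$ an honest idempotent $e'$ with $u',v'\in e'R_\beta e'$ and $u'v'=e'$. Stable Dedekind finiteness of $R_\beta$ makes $e'$ a finite idempotent, so $v'u'=e'$; applying $\varphi_\beta$ gives $vu=e$ in $R$, so every idempotent of $R$ is finite.

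Second, for the stably finite case I would invoke the standard fact that the functor $M_n(-)$ commutes with direct limits, so that $M_n(R)\cong\varinjlim_{\alpha\in D}M_n(R_\alpha)$ (a matrix over the limit has finitely many entries, each arising at some stage, which one collapses to a common stage by directedness). If each $R_\alpha$ is stably finite, then each $M_n(R_\alpha)$ is Dedekind finite, whence the Dedekind finite case just established shows $M_n(R)$ is Dedekind finite for every $n\ge 1$; thus $R$ is stably finite.

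The only point requiring care — and the main obstacle — is the bookkeeping forced by the non-unital setting: one must ensure that idempotency of $e$, the containments $u,v\in eRe$ (encoded by $eu=ue=u$ and $ev=ve=v$), and the relation $uv=e$ can all be arranged at one common index $\beta$. This is exactly where directedness of $D$ is essential, collapsing finitely many finite-stage witnesses to a single stage; no analytic or metric input is needed, and the identical mechanism handles the matrix entries in the stably finite step.
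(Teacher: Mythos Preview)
Your proposal is correct and follows essentially the same approach as the paper: lift the elements $e,u,v$ to some $R_\alpha$, use directedness to pass to an index where the finitely many relations $e^2=e$, $u,v\in eRe$, $uv=e$ hold, apply Dedekind finiteness there, and push back down; the stably finite case is handled identically via $M_n(R)\cong\varinjlim M_n(R_\alpha)$. The only cosmetic differences are that the paper encodes $u,v\in eRe$ via $eue=u$, $eve=v$ rather than your four equations, and collapses your two-step $\alpha\to\beta$ move into a single index.
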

\begin{proof}
It is enough to handle the Dedekind finite case, as  $M_n(R)$ is the direct limit of the $M_n(R_{\alpha})$. This is basically a consequence of the fact that the bicyclic monoid is a finitely presented semigroup and any homomorphism from a finitely presented semigroup into a direct limit of semigroups factors through one of the semigroups in the system.  In detail,  assume that each $R_{\alpha}$ is Dedekind finite, and let $e\in R$ be an idempotent.  Suppose that $u,v\in eRe$ with $uv=e$.  Then there is $\alpha\in D$ and elements $e',u',v'\in R_{\alpha}$ such that $e'$, $u'$ and $v'$ map to $e$, $u$, and $v$ (respectively) in $R$, and  $e'=(e')^2$, $e'u'e'=e'$, $e'v'e'=v'$ and $u'v'=e'$ (since the equations $eue=u$, $eve=v$, $e^2=e$ and $uv=e$ involve only finitely many elements of $R$).  But then  $v'u'=e'$ because $R_{\alpha}$ is Dedekind finite.  Therefore, $vu=e$.  This completes the proof.
\end{proof}

It is also standard that a right or left Noetherian unital ring is stably finite.  This follows from the fact that $M_n(R)$ is the endomorphism ring of $R^n$ and the following well-known lemma, whose proof we omit.

\begin{Lemma}
Let $M$ be a Noetherian module over a ring $R$.  Then $\mathrm{End}_R(M)$ is Dedekind finite.
\end{Lemma}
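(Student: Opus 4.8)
The plan is to exploit the classical fact that a surjective endomorphism of a Noetherian module is automatically injective, hence an isomorphism; Dedekind finiteness of $\End_R(M)$ then follows formally. Since $\End_R(M)$ is unital, by the remark following Proposition~\ref{p:below.finite} it suffices to show that every left invertible element is right invertible. So I would start with $\alpha,\beta\in\End_R(M)$ satisfying $\beta\alpha=\mathrm{id}_M$ and aim to deduce $\alpha\beta=\mathrm{id}_M$.

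First I would record the consequences of the relation $\beta\alpha=\mathrm{id}_M$: the map $\beta$ is surjective (since $\beta\alpha$ already is) and $\alpha$ is injective. The whole argument then reduces to the following sublemma: a surjective $R$-endomorphism $\beta$ of the Noetherian module $M$ is injective.

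To prove the sublemma, I would consider the chain of submodules
\[\ker\beta\subseteq\ker\beta^2\subseteq\ker\beta^3\subseteq\cdots,\]
which is genuinely ascending because $\beta^n(x)=0$ forces $\beta^{n+1}(x)=0$. By the Noetherian hypothesis this chain stabilizes, say $\ker\beta^n=\ker\beta^{n+1}$. Now take any $x\in\ker\beta$. Since $\beta$ is surjective, so is $\beta^n$, and thus $x=\beta^n(y)$ for some $y\in M$; then $\beta^{n+1}(y)=\beta(x)=0$, so $y\in\ker\beta^{n+1}=\ker\beta^n$, giving $x=\beta^n(y)=0$. Hence $\ker\beta=0$, so $\beta$ is injective and therefore an automorphism of $M$.

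Finally I would close the loop: with $\beta$ an isomorphism, the relation $\beta\alpha=\mathrm{id}_M$ forces $\alpha=\beta^{-1}$, whence $\alpha\beta=\mathrm{id}_M$, as required. I do not expect a serious obstacle here; the only nonobvious move is the stabilization-and-pullback step in the sublemma, where surjectivity is used to produce a preimage $y$ at level $n$ so that the collapse $\ker\beta^{n+1}=\ker\beta^n$ can be triggered.
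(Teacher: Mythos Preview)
Your argument is correct: the key step is the standard fact that a surjective endomorphism of a Noetherian module is injective, proved via stabilization of the chain $\ker\beta\subseteq\ker\beta^2\subseteq\cdots$ together with surjectivity of $\beta^n$ to pull back an element of $\ker\beta$. The paper itself omits the proof of this lemma entirely (it is stated as ``well-known''), so there is no proof to compare against; your write-up supplies exactly the expected classical argument.
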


It follows that any commutative ring is stably finite.  Indeed, since adjoining an identity does not affect commutativity, we may assume that our commutative ring is unital.  But then  it is a direct limit of finitely generated commutative rings with unit, which are Noetherian, by Hilbert's basis theorem, and hence stably finite. Alternatively,  unital commutative rings are stably finite  thanks to determinants.

The crux of the following argument goes at least as far back as Kaplansky~\cite{Kaplanskydirect}.

\begin{Cor}\label{c:field.is.c}
Let $\mathscr G$ be an ample groupoid and $K$ an integral domain of characteristic $0$.  If $\mathbb C\mathscr G$ is Dedekind finite (respectively, stably finite), then $K\mathscr G$ is as well.
\end{Cor}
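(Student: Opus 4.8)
The plan is to exploit that testing Dedekind finiteness or stable finiteness only involves finitely many scalars at a time, to reduce the coefficient domain to a countable one, embed that into $\mathbb{C}$, and then quote the facts recorded above that subrings and direct limits preserve these finiteness properties.

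First I would record the functoriality of the construction $K\mapsto K\mathscr{G}$ in the coefficients. Because $K\mathscr{G}$ is the quotient of $K\Gamma_c(\mathscr{G})$ by the ideal generated by the elements $1_{U\cup V}-1_U-1_V$ with $U,V$ disjoint compact open subsets of $\mathscr{G}\skel 0$, and these relations have integer coefficients independent of $K$, the assignment commutes with base change: $K\mathscr{G}\cong K\otimes_{\mathbb{Z}}\mathbb{Z}\mathscr{G}$, and $K\mathscr{G}$ is a free $K$-module with a basis not depending on $K$~\cite{mygroupoidalgebra}. Two consequences matter. An injective homomorphism $R\hookrightarrow K$ of commutative unital rings induces an injective algebra homomorphism $R\mathscr{G}\hookrightarrow K\mathscr{G}$; and since $\otimes$ commutes with direct limits, writing $K=\varinjlim_{\alpha} R_{\alpha}$ as the directed union of its finitely generated subrings $R_{\alpha}$ yields $K\mathscr{G}=\varinjlim_{\alpha} R_{\alpha}\mathscr{G}$.

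Now each $R_{\alpha}$ is a finitely generated subring of the integral domain $K$, hence itself an integral domain of characteristic $0$, and being a finitely generated $\mathbb{Z}$-algebra it is countable. Its field of fractions $F_{\alpha}$ is therefore a countable field of characteristic $0$; choosing a (countable) transcendence basis of $F_{\alpha}$ over $\mathbb{Q}$ and mapping it to an algebraically independent family in $\mathbb{C}$ --- possible since $\mathbb{C}$ has transcendence degree $\mathfrak{c}\ge\aleph_0$ over $\mathbb{Q}$ and is algebraically closed --- produces a field embedding $F_{\alpha}\hookrightarrow\mathbb{C}$, which restricts to a ring embedding $R_{\alpha}\hookrightarrow\mathbb{C}$. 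By the previous paragraph this induces an embedding $R_{\alpha}\mathscr{G}\hookrightarrow\mathbb{C}\mathscr{G}$. Since $\mathbb{C}\mathscr{G}$ is assumed Dedekind finite (respectively stably finite) and these properties pass to subrings, each $R_{\alpha}\mathscr{G}$ is Dedekind finite (respectively stably finite).

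Finally I would apply Proposition~\ref{p:directlimit} to the direct limit $K\mathscr{G}=\varinjlim_{\alpha} R_{\alpha}\mathscr{G}$ to conclude that $K\mathscr{G}$ is Dedekind finite (respectively stably finite). The main obstacle is not the finiteness bookkeeping, which is handled uniformly by Proposition~\ref{p:directlimit}, but the two structural inputs on ample groupoid algebras: that $K\mapsto K\mathscr{G}$ is given by base change from $\mathbb{Z}$ and that $K\mathscr{G}$ is $K$-free, so that ring embeddings of coefficients induce embeddings of the algebras even in the non-Hausdorff case. Granting these, the embedding of a countable characteristic-$0$ domain into $\mathbb{C}$ is routine transcendence-basis theory, and the rest is immediate from the already established passage to subrings and direct limits.
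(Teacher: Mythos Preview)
Your proof is correct and follows essentially the same strategy as the paper: reduce to finitely generated (hence countable) coefficient domains of characteristic $0$ via a direct limit, embed these into $\mathbb C$ using a transcendence-basis argument, and invoke Proposition~\ref{p:directlimit} together with passage to subrings. The paper differs only cosmetically, passing first to the field of fractions of $K$ and then to its finitely generated subfields, and justifying the inclusion $K_\alpha\mathscr G\hookrightarrow K\mathscr G$ via the observation that each $f\in K\mathscr G$ takes only finitely many values rather than via base change and freeness.
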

\begin{proof}
First of all, replacing $K$ by its field of quotients if necessary we may assume that $K$ is a field (since these properties pass to subrings).
We can write $K$ as a directed union of subfields $\{K_{\alpha}\mid \alpha\in A\}$ with each $K_{\alpha}$ a finitely generated field extension of $\mathbb Q$.  Since any $f\in K\mathscr G$ assumes only finitely many values, it follows that $f\in K_{\alpha}\mathscr G$ for some $\alpha\in A$.  Thus $K\mathscr G$ is the directed union of the $K_{\alpha}\mathscr G$.    By Proposition~\ref{p:directlimit}, it suffices to show that each $K_{\alpha}\mathscr G$ is Dedekind finite (respectively, stably finite).  So without loss of generality, we may assume that $K$ is finitely generated over $\mathbb Q$. But it is well known that any finitely generated extension of $\mathbb Q$ embeds in $\mathbb C$.  Indeed,  let $T$ be a transcendence basis for $K$ over $\mathbb Q$ and note that $T$ is countable since $K$ is countable.  Since $\mathbb C$ has uncountable transcendence degree over $\mathbb Q$, we can find an injective $\mathbb Q$-algebra homomorphism from $\mathbb Q[T]$ into $\mathbb C$ and this extends to an embedding $\mathbb Q(T)\hookrightarrow \mathbb C$.  Since $K/\mathbb Q(T)$ is algebraic and $\mathbb C$ is algebraically closed, we can extend this embedding to $K$.
\end{proof}

A very similar argument, shows that if $\mathscr G$ is an ample groupoid with a stably finite algebra over every finite field, then $\mathscr G$ has a stably finite algebra over every commutative ring.  For group algebras, this is a folklore argument and the same proof works for ample groupoids.   If $K$ is a unital commutative ring, then the nilpotent elements of $K$ form an ideal called the radical of $K$, which can alternatively be characterized as the intersection of all the prime ideals of $K$~\cite{Eisenbud}.   If the radical of $K$ vanishes, then $K$ is said to be reduced.  Every integral domain is reduced.   A commutative ring is Jacobson if each prime ideal is an intersection of maximal ideals. We remark that if $\p\colon K\to L$ is a homomorphism of commutative rings and $\mathscr G$ is an ample groupoid, then there is an induced homomorphism $K\mathscr G\to L\mathscr G$ given by $\p(f)(\gamma)=\p(f(\gamma))$ since $\p(1_U)=1_U$ for $U\in \Gamma_c(\mathscr G)$.

\begin{Prop}\label{p:malcev.idea}
Let $\mathscr G$ be an ample groupoid such that $F\mathscr G$ is stably finite for every finite field $F$.  Then $K\mathscr G$ is stably finite for every commutative ring $K$.
\end{Prop}
\begin{proof}
We can write $K$ as a directed union of subrings that are finitely generated as $\mathbb Z$-algebras.  Since direct limits of stably finite rings are stably finite, we may assume without loss of generality that $K$ is a finitely generated $\mathbb Z$-algebra.  First assume that $K$ is reduced.   Every finitely generated $\mathbb Z$-algebra is Jacobson~\cite[Theorem~4.19]{Eisenbud}.  Hence $K$ is Jacobson and reduced, and so the intersection of all its the maximal ideals is $0$.     Suppose that $A,B\in M_n(K\mathscr G)$ satisfy $AB=I$, but $BA\neq I$, that is, $I-BA\neq 0$.  Let $f\in K\mathscr G$ be a nonzero entry of $I-BA$ and $k\in K$ a nonzero value taken on by $f$. Then there is a maximal ideal $\mathfrak m$ of $K$ with $k\notin \mathfrak m$.  Putting $F=K/\mathfrak m$,  the images $A',B'$ of $A$ and $B$ in $M_n(F\mathscr G)$ satisfy $A'B'=I$ but $B'A'\neq  I$.  But $F$ is finite. Indeed~\cite[Theorem~4.19]{Eisenbud} with $R=\mathbb Z$ and $S=K$ says that $\mathfrak M=\{n\in \mathbb Z\mid n\cdot 1_K\in \mathfrak m\}$ is a maximal ideal of $\mathbb Z$, and hence of the form $p\mathbb Z$ for some prime $p$, and $F$ is a finite extension of $\mathbb Z/p\mathbb Z$, whence finite. This contradicts that $F\mathscr G$ is stably finite.  If $K$ is not reduced,  then since $K$ is Noetherian by Hilbert's basis theorem, it follows that the radical $R$ of $K$ finitely generated and hence nilpotent, say $R^k=0$.  Trivially, $R\mathscr G = \{f\in K\mathscr G\mid f(\mathscr G)\subseteq R\}$ is a nilpotent ideal as $(R\mathscr G)^k=0$.  If $A,B\in M_n(K\mathscr G)$ satisfy $AB=I$, then $I-BA\in M_n(R\mathscr G)$ by the previous case since $K/R$ is reduced. But $M_n(R\mathscr G)^k=0$ and hence is a nilpotent ideal in $M_n(K\mathscr G)$.  Thus $I-BA$ is nilpotent and so $BA=I-(I-BA)$ is invertible.  Thus $B$ is right invertible, and so $A=B^{-1}$. 
\end{proof}

Kaplansky famously proved~\cite[Page~122]{Kaplanskydirect} that if $G$ is a group and $K$ is a field of characteristic $0$, then $KG$ is stably finite.  Kaplansky's original proof was to show that a $C^\ast$-algebra with a faithful trace is stably finite and then to observe that the group von Neumann algebra, which contains $\mathbb CG$ as a subalgebra, has a faithful trace.  Passman later gave a proof using only undergraduate level analytic ideas~\cite{Passmanidem}.  We later adapt some of Passman's ideas to groupoid algebras.

It was proved in~\cite{soficnoetherianrings}, expanding on earlier work in~\cite{soficgroup}, that if $R$ is a right (or left) Noetherian ring and $G$ is a sofic group, then $RG$ is stably finite.  Note that it is an open question whether all groups are sofic.  Sofic groups include amenable groups (hence all abelian or solvable groups) and residually finite groups.  The reader is referred to~\cite{Weisssofic} for more details on sofic groups.

 Let us say that a unital ring $R$ is \emph{residually Noetherian} if, for every $0\neq r\in R$, there is an ideal $I$ such that $r\notin I$ and $R/I$ is  right  or left Noetherian. For example, any residually finite ring is residually Noetherian.   We say that $R$ is \emph{locally residually Noetherian} if each finitely generated unital subring  of $R$ is contained in a residually Noetherian subring of $R$, that is, any finite subset of $R$ is contained in a residually Noetherian unital subring. Note that left or right Noetherian implies residually Noetherian, which in turn implies locally residually Noetherian.  Obviously every unital commutative ring is locally residually Noetherian since every finitely generated commutative ring with unit is Noetherian by Hilbert's basis theorem.

The following theorem is~\cite[Theorem~1.2]{soficnoetherianrings}.

\begin{Thm}[Li and Liang]\label{t:LiLiang}
Let $R$ be a unital left Noetherian ring and $G$ be a sofic group. Then $RG$ is
stably finite.
\end{Thm}

As an application, we obtain the next result.

\begin{Thm}\label{t:soficnoeth}
If $R$ is a locally residually Noetherian ring, then $RG$ is stably finite for any sofic group $G$.  In particular, if
$K$ is a commutative ring with unit and $G$ is a sofic group, then $KG$ is stably finite.
\end{Thm}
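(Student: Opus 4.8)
The plan is to reduce the statement to the fact recorded just above: that $SG$ is stably finite (in particular Dedekind finite) whenever $S$ is a right or left Noetherian ring and $G$ is sofic \cite{soficnoetherianrings}. The heart of the matter is the following claim, which I would isolate first: \emph{if $S$ is a residually Noetherian ring, then $SG$ is Dedekind finite.}

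To prove the claim, note that $S$ is unital, hence so is $SG$, and Dedekind finiteness amounts to showing $uv=1 \Rightarrow vu=1$ for $u,v\in SG$. Suppose instead that $uv=1$ while $w:=vu-1\neq 0$; then some $g\in G$ occurs in $w$ with a nonzero coefficient $r\in S$. Using residual Noetherianity I would choose a two-sided ideal $I\trianglelefteq S$ with $r\notin I$ and $S/I$ right or left Noetherian, and then pass along the coefficientwise ring homomorphism $SG\to (S/I)G$, whose kernel consists of exactly those elements all of whose coefficients lie in $I$. The relation $uv=1$ maps to $\bar u\bar v=1$ in $(S/I)G$; since $S/I$ is Noetherian, $(S/I)G$ is stably finite, hence Dedekind finite, by \cite{soficnoetherianrings}, forcing $\bar v\bar u=1$, i.e. the image of $w$ vanishes. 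But the coefficient of $g$ in that image is $r+I\neq 0$, a contradiction. This proves the claim.

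From the claim I would deduce the theorem in two further reductions. For stable finiteness of $SG$ with $S$ residually Noetherian, I apply the claim to the matrix ring $M_n(S)$, which is again residually Noetherian: its two-sided ideals are exactly the $M_n(I)$ for $I\trianglelefteq S$, one has $M_n(S)/M_n(I)\cong M_n(S/I)$, which is Noetherian whenever $S/I$ is, and any nonzero matrix has a nonzero entry that can be separated by a suitable such quotient. Since $M_n(SG)\cong M_n(S)G$, each $M_n(SG)$ is then Dedekind finite, so $SG$ is stably finite. To remove the word ``locally'' in the hypothesis on the theorem's ring $R$: any relation witnessing a failure of Dedekind finiteness in some $M_n(RG)$ involves only finitely many coefficients from $R$, and these lie in a finitely generated unital subring, hence in a residually Noetherian subring $R'$ of $R$; the relation then already lives in $M_n(R'G)$, which is stably finite by what was just shown. (This is the finitely-presented-factoring argument underlying Proposition~\ref{p:directlimit}.) Hence $RG$ is stably finite, and the ``in particular'' clause follows since every commutative ring with unit is locally residually Noetherian by Hilbert's basis theorem, as noted above.

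I expect whatever mild technical care is needed to reside in the matrix-ring step, namely checking $M_n(RG)\cong M_n(R)G$ and that residual Noetherianity ascends to $M_n(R)$, rather than in any genuine difficulty: the essential content is the single ``separate a nonzero coefficient by a Noetherian quotient'' move, which funnels everything into the already-established sofic--Noetherian theorem.
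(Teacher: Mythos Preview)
Your proof is correct and follows essentially the same route as the paper's: reduce to a residually Noetherian coefficient ring using the finitely-many-coefficients argument, then separate a single nonzero coefficient by a Noetherian quotient and invoke \cite{soficnoetherianrings}. The only organizational difference is that the paper works directly inside $M_n(RG)$ (picking a nonzero entry of $I-BA$, then a nonzero coefficient of that entry in $R$), whereas you first prove Dedekind finiteness of $SG$ for $S$ residually Noetherian and then apply this with $S=M_n(R)$ via the identifications $M_n(RG)\cong M_n(R)G$ and the (easy) fact that $M_n(R)$ inherits residual Noetherianity from $R$; this is a mild repackaging rather than a different idea.
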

\begin{proof}
Let $A,B\in M_n(RG)$ with $AB=I$ and $BA\neq I$.  Then there is a finitely generated subring $R_1$ of $R$ such that $A,B\in M_n(R_1G)$.  Since $R_1$ embeds in a residually Noetherian subring, we may assume without loss of generality that $R$ is residually Noetherian.   Let $P=I-BA$ and suppose that $p_{ij}$ is a nonzero entry of $P$. Then we can write $p_{ij}=\sum_{i=1}^n r_ig_i$ with $r_i\in R\setminus \{0\}$ and $g_i\in G$.   Then we can find an ideal $I$ of $R$ such that $r_1\notin I$ and $R/I$ is right or left Noetherian.  Put $R'=R/I$ and let $\pi\colon M_n(RG)\to M_n(R'G)$ be the homomorphism induces by the projection $R\to R'$.  Then, by choice of $I$, we have $0\neq\pi(P)=I-\pi(B)\pi(A)$ and $\pi(A)\pi(B)=I$.  Therefore, $M_n(R'G)$ is not Dedekind finite, contradicting Theorem~\ref{t:LiLiang}.  Thus $BA=I$, and so $RG$ is stably finite.
\end{proof}

Probably most of the results of this paper will work for groupoid algebras over arbitrary base rings, but we stick for the most part to the commutative case since most of the theory of ample groupoid algebras has only been developed in that setting.  The one exception is in the appendix where we consider more general semigroup algebras over arbitrary unital rings.

\section{Stably finite ample groupoid, inverse semigroup and Leavitt path algebras}\label{s:stable.finiteness}
This section addresses stable finitness in ample groupoids over arbitrary commutative base rings.  Later, we shall develop analytic techniques inspired by Passman~\cite{Passmanidem} and $C^\ast$-algebra theory to address the case of characteristic $0$.
The main result of this section draws its inspiration from an idea of Munn for monoid algebras~\cite{MunnDirect}, together with ideas from~\cite{gcrccr}.

\subsection{A stable finiteness result}

Let $\mathscr G$ be an ample groupoid and $K$ a commutative ring with unit.  By the \emph{support} of $f\in K\mathscr G$, we mean the set $\supp(f)=\{\gamma\in \mathscr G\mid f(\gamma)\neq 0\}$.
A subset $X\subseteq \mathscr G\skel 0$ was defined in~\cite{groupoidprimitive} to be \emph{$K$-dense} if, for each $0\neq f\in K\mathscr G$, $X\cap \ran(\supp(f))\neq \emptyset$.  It was observed in~\cite[Proposition~4.2]{groupoidprimitive} that any $K$-dense subset is dense in $\mathscr G\skel 0$, and the converse holds when $\mathscr G$ is Hausdorff.

It $\mathscr G$ is an ample groupoid, then a finite orbit in $\mathscr G$ is automatically both closed in the topology and discrete in the subspace topology since $\mathscr G\skel 0$ is Hausdorff.  We remark, that if an ample groupoid $\mathscr G$ is second countable, then any closed orbit is discrete in the subspace topology by~\cite[Lemma~5.1]{gcrccr}.

If $X$ is a set and $R$ is a ring, then $M_X(R)$ denotes the ring of $X\times X$-matrices over $R$ with only finitely many nonzero entries under usual matrix multiplication where the $xy$-entry of $AB$ is $\sum_{z\in X}a_{xz}b_{zy}$, which is finite by hypotheses.  Note that $M_X(R)\cong \varinjlim M_F(R)$ where $F$ runs over the finite subsets of $X$.  Thus if $R$ is stably finite, then $M_X(R)$ is stably finite, as well, by Proposition~\ref{p:directlimit}.  In fact, $R$ is stably finite if and only if $M_{\mathbb N}(R)$ is Dedekind finite.  Note that $M_{\mathbb  N}(R)$ is not unital even when $R$ is unital.

\begin{Thm}\label{t:finite.orbits.improved}
Let $\mathscr G$ be an ample groupoid and $K$ a commutative ring with unit.  Suppose that there is a set $\{\mathcal O_a\mid a\in A\}$ of closed orbits, each of which is discrete in the subspace topology, such that $\bigcup_{a\in A}\mathcal O_a$ is $K$-dense and $KG$ is stably finite for each isotropy group $G$ of an element in an orbit $\mathcal O_a$ with $a\in A$.  Then $K\mathscr G$ is stably finite.  In particular, if $K$ is an integral domain of characteristic $0$, or if the isotropy groups of the elements of the orbits $\mathcal O_a$ are sofic, then $K\mathscr G$ is stably finite.
\end{Thm}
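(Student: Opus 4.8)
The plan is to prove the stronger statement that $M_n(K\mathscr G)$ is Dedekind finite for every $n\geq 1$. Since Dedekind finiteness of a (possibly non-unital) ring means that all of its idempotents are finite, it suffices to fix $n$ and an idempotent $e\in M_n(K\mathscr G)$, take $u,v\in eM_n(K\mathscr G)e$ with $uv=e$, and show that $P:=e-vu$ vanishes. The engine of the argument is a family of restriction homomorphisms indexed by $A$. For each $a\in A$ the orbit $\mathcal O_a$ is invariant and closed, so $U_a:=\mathscr G\skel 0\setminus \mathcal O_a$ is an invariant open subset, and the quotient map yields a homomorphism $\pi_a\colon K\mathscr G\to K\mathscr G/K\mathscr G|_{U_a}\cong K\mathscr G|_{\mathcal O_a}$, which I will use in the concrete form of restriction of functions to the arrows of $\mathscr G|_{\mathcal O_a}$, its kernel being exactly those $f$ with $\supp(f)\subseteq \mathscr G|_{U_a}$. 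Each $\pi_a$ is extended entrywise to $M_n$.

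The first key step is to identify $K\mathscr G|_{\mathcal O_a}$ and verify that it is stably finite. Since $\mathcal O_a$ is closed, $\mathscr G|_{\mathcal O_a}$ is ample; since $\mathcal O_a$ is discrete in the subspace topology, this \'etale groupoid has discrete unit space and is therefore itself discrete, and being a single orbit it is a transitive discrete groupoid with isotropy group $G_a$. Choosing a base point and a connecting arrow to each object identifies $\mathscr G|_{\mathcal O_a}$ with $\mathcal O_a\times G_a\times \mathcal O_a$, whence $K\mathscr G|_{\mathcal O_a}\cong M_{\mathcal O_a}(KG_a)$, the finitely supported $\mathcal O_a\times \mathcal O_a$ matrices over $KG_a$. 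By hypothesis $KG_a$ is stably finite, so $M_{\mathcal O_a}(KG_a)$ is stably finite as a directed union of the $M_F(KG_a)$ (Proposition~\ref{p:directlimit}), and hence $M_n(K\mathscr G|_{\mathcal O_a})$ is Dedekind finite. Applying $\pi_a$ to $uv=e$ gives $\pi_a(u)\pi_a(v)=\pi_a(e)$ with $\pi_a(u),\pi_a(v)\in \pi_a(e)M_n(K\mathscr G|_{\mathcal O_a})\pi_a(e)$; Dedekind finiteness of $M_n(K\mathscr G|_{\mathcal O_a})$, i.e.\ finiteness of the idempotent $\pi_a(e)$, forces $\pi_a(v)\pi_a(u)=\pi_a(e)$, so that $\pi_a(P)=0$. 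This holds for every $a\in A$.

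The final step turns $K$-density against a putative nonzero $P$. If $P\neq 0$, then some entry $p=P_{ij}$ is a nonzero element of $K\mathscr G$, so by $K$-density of $\bigcup_{a\in A}\mathcal O_a$ there is an arrow $\gamma\in\supp(p)$ with $\ran(\gamma)\in\mathcal O_a$ for some $a$. Because $\mathcal O_a$ is invariant, $\dom(\gamma)\in\mathcal O_a$ as well, so $\gamma$ is an arrow of $\mathscr G|_{\mathcal O_a}$ with $p(\gamma)\neq 0$; thus $\supp(p)\not\subseteq \mathscr G|_{U_a}$, i.e.\ $\pi_a(p)\neq 0$, contradicting $\pi_a(P)=0$. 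Hence $P=0$, every idempotent of $M_n(K\mathscr G)$ is finite, and $K\mathscr G$ is stably finite. The two ``in particular'' cases then reduce to verifying the hypothesis that $KG_a$ is stably finite: when the $G_a$ are sofic this is Theorem~\ref{t:soficnoeth}, since $K$ is commutative and unital, hence locally residually Noetherian; when $K$ is an integral domain of characteristic $0$, Kaplansky's theorem gives that $\mathbb C G_a$ is stably finite, and Corollary~\ref{c:field.is.c} applied to the one-object (discrete) groupoid $G_a$ upgrades this to stable finiteness of $KG_a$.

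I expect the main obstacle to lie in the non-Hausdorff bookkeeping underlying the second and third steps: one must be certain that the abstract quotient $K\mathscr G\to K\mathscr G|_{\mathcal O_a}$ is genuinely restriction of functions, with kernel $\{f:\supp(f)\subseteq \mathscr G|_{U_a}\}$, so that the nonvanishing of $\pi_a(p)$ can be detected pointwise at the single arrow $\gamma$ produced by $K$-density. Working throughout with finite idempotents rather than assuming that $K\mathscr G$ or $K\mathscr G|_{\mathcal O_a}$ is unital (which fails unless the relevant unit spaces are compact) is the other point requiring care.
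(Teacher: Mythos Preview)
Your proposal is correct and follows essentially the same approach as the paper: pick a nonzero entry of $P=e-vu$, use $K$-density to find an orbit $\mathcal O_a$ meeting $\ran(\supp(P_{ij}))$, apply the restriction homomorphism $K\mathscr G\to K\mathscr G|_{\mathcal O_a}\cong M_{\mathcal O_a}(KG_a)$, and invoke stable finiteness of $KG_a$ to reach a contradiction. The only cosmetic difference is the order of presentation---you first show $\pi_a(P)=0$ for all $a$ and then derive a contradiction, whereas the paper selects the relevant $a$ first---and your concern about the non-Hausdorff bookkeeping for the restriction map is exactly the point the paper handles by citing \cite{gcrccr}.
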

\begin{proof}
Suppose that $e$ is an idempotent of $M_n(K\mathscr G)$ and  $a,b\in eM_n(K\mathscr G)e$ are such that $ab=e$.  Suppose that $ba\neq e$.  Then $f=e-ba$ is a nonzero idempotent in $eM_n(K\mathscr G)e$.  Let $f_{ij}\in K\mathscr G$ be a nonzero entry of $f$.    By assumption, there is an orbit $\mathcal O_a$ with $a\in A$ such that $\ran(\supp(f_{ij}))\cap \mathcal O_a\neq \emptyset$.  Since $\mathcal O_a$ is closed and invariant, there is a surjective ring homomorphism $\pi\colon K\mathscr G\to K\mathscr G|_{\mathcal O_a}$ given by $\pi(h) = h|_{\mathscr G|_{\mathcal O_a}}$ (see the discussion in~\cite[Pages 1596--1597]{gcrccr}).  We have an induced surjective ring homomorphism $\pi_n\colon M_n(K\mathscr G)\to M_n(K\mathscr G|_{\mathcal O_a})$ applying $\pi$ entrywise. By construction, $\pi_n(f)\neq 0$, and so $\pi_n(a),\pi_n(b)\in \pi_n(e)K\mathscr G|_{\mathcal O_a}\pi_n(e)$ with $\pi_n(a)\pi_n(b)=\pi_n(e)$ and $0\neq \pi_n(f) = \pi_n(e)-\pi_n(b)\pi_n(a)$.  Thus $K\mathscr G|_{\mathcal O_a}$ is not stably finite.

But if $G_a$ is the isotropy group of some fixed element of $\mathcal O_a$, then $K\mathscr G|_{\mathcal O_a}\cong M_{\mathcal O_a}(KG_a)$  because $\mathcal O_a$ is discrete, cf.~\cite[Proposition~5.3]{gcrccr} whose assumption that the base ring is a field is unnecessary.  Since $KG_a$ is stably finite by hypothesis, the observation before the theorem shows that  $K\mathscr G|_{\mathcal O_a}$ is stably finite.  This contradiction shows that $K\mathscr G$ is stably finite.

The final statement follows because  under these hypotheses $KG$ will be stably finite for each isotropy group $G$ of an element of an orbit $\mathcal O_a$ (for integral domains of characteristic $0$ by Kaplansky~\cite{Kaplanskydirect} and for sofic group algebras by Theorem~\ref{t:soficnoeth}).
\end{proof}

Since each orbit in a group bundle is a singleton, we obtain the following corollary.

\begin{Cor}\label{c:grp.bund}
Let $\mathscr G$ be an ample group bundle and $K$ a commutative ring with unit.  Suppose that there is a $K$-dense subset $X$ of $\mathscr G\skel 0$ such that $KG$ is stably finite for each isotropy group $G$ of an element of $X$.  Then $K\mathscr G$ is stably finite.  In particular, $K\mathscr G$ is stably finite when $K$ is an integral domain of characteristic $0$ or if each isotropy group of $\mathscr G$ is sofic.
\end{Cor}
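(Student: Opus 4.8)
The plan is to read the corollary off Theorem~\ref{t:finite.orbits.improved} by taking the distinguished family of orbits to be the singletons indexed by the points of $X$. The key observation is that in a group bundle every orbit is a singleton: by definition $\dom(\gamma)=\ran(\gamma)$ for all $\gamma\in\mathscr G$, so the only point reachable by an arrow from $x\in\mathscr G\skel 0$ is $x$ itself, and hence the orbit of $x$ is $\{x\}$.

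First I would check that each such singleton orbit $\mathcal O_x=\{x\}$ (for $x\in X$) satisfies the topological hypotheses of the theorem. Closedness is immediate since the unit space $\mathscr G\skel 0$ is Hausdorff and therefore has closed points, while discreteness in the subspace topology is automatic for a one-point set. Taking the index set to be $X$ and the family to be $\{\mathcal O_x\mid x\in X\}$, the union of these orbits is exactly $X$, which is $K$-dense by hypothesis, and the isotropy group of the unique element of $\mathcal O_x$ is the group $G$ at $x$, whose group algebra $KG$ is stably finite by assumption. Every hypothesis of Theorem~\ref{t:finite.orbits.improved} is thus in place, and the theorem delivers stable finiteness of $K\mathscr G$.

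The ``in particular'' clause then transfers directly: if $K$ is an integral domain of characteristic $0$ each $KG$ is stably finite by Kaplansky~\cite{Kaplanskydirect}, and if each isotropy group is sofic each $KG$ is stably finite by Theorem~\ref{t:soficnoeth}, so in either case the first assertion applies. I expect no real obstacle here, since the whole argument is the collapse of orbits to points followed by an appeal to the theorem; the only step meriting a moment of care is verifying that the one-point orbits are simultaneously closed and discrete, which depends solely on the Hausdorffness of the unit space.
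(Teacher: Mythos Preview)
Your proposal is correct and matches the paper's approach exactly: the paper's proof is the single sentence ``Since each orbit in a group bundle is a singleton, we obtain the following corollary,'' and your argument is precisely the unpacking of that sentence into the verification that singleton orbits satisfy the closed/discrete hypotheses of Theorem~\ref{t:finite.orbits.improved}. The only minor remark is that for the ``in particular'' clause you should note (as you implicitly do) that one may take $X=\mathscr G\skel 0$, which is trivially $K$-dense.
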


We next apply our results to  obtain new proofs, and generalizations, of results of Munn for inverse semigroup algebras~\cite{MunnDirect} and Va\v{s} for Leavitt path algebras~\cite{Vasdirect}.

\subsection{Inverse semigroup algebras}
We first apply our results to inverse semigroup algebras to prove a generalization of a result of Munn~\cite{MunnDirect}.

 The following was proved in~\cite[Proposition~5.18]{groupoidprimitive}, where we recall that $\mathscr G(S)$ denotes the universal groupoid of $S$.

\begin{Prop}\label{p:prin.k.dense}
Let $S$ be an inverse semigroup and $K$ a commutative ring with unit.  Then the principal characters are $K$-dense in $\mathscr G(S)\skel 0$.
\end{Prop}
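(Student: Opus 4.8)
The plan is to pass through the isomorphism $KS\cong K\mathscr G(S)$ of the earlier theorem and then evaluate a given element on a single, carefully chosen germ lying over a principal character. Recall that under this isomorphism each $s\in S$ corresponds to the indicator function $1_{\Theta_s}$ of the compact open bisection $\Theta_s=\{[s,\chi]\mid \chi\in D(s^\ast s)\}$. Unwinding the definitions, a subset of $\mathscr G(S)\skel 0$ is $K$-dense precisely when it meets $\ran(\supp f)$ for every nonzero $f$; so, writing an arbitrary nonzero element as $f=\sum_{s\in T}c_s 1_{\Theta_s}$ with $T$ finite and each $c_s\neq 0$, I must produce a germ $\gamma\in\supp f$ whose range is a principal character.

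The key computational step is a description of when two germs over a principal character coincide. I would first prove that, for idempotents $e\le s^\ast s$ and $e\le t^\ast t$, one has $[s,\theta_e]=[t,\theta_e]$ if and only if $se=te$: if $se=te$ then $u=se=te$ is a common lower bound of $s,t$ with $u^\ast u=e$, witnessing the germ equality, and conversely any common lower bound $u$ of $s,t$ with $\theta_e(u^\ast u)=1$ satisfies $se=ue=te$ because $e\le u^\ast u$. Feeding this into the definition of $1_{\Theta_s}$ yields the evaluation formula
\[
f([t,\theta_e])=\sum\{\,c_s\mid s\in T,\ e\le s^\ast s,\ se=te\,\}.
\]

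It remains to choose $t$ and $e$ so that this sum collapses to a single nonzero term. Here the crucial observation is that shrinking $e$ only merges more elements of $T$ at the germ, so $e$ should be taken as large as the domain permits, namely $e=t^\ast t$. With this choice the condition $s\,t^\ast t=t\,t^\ast t=t$ is exactly $t\le s$, so the contributing indices are the $s\in T$ with $s\ge t$. Choosing $t$ to be maximal in the finite set $T$ for the natural partial order forces the only such index to be $t$ itself, whence $f([t,\theta_{t^\ast t}])=c_t\neq 0$. Thus $\gamma=[t,\theta_{t^\ast t}]$ lies in $\supp f$, and its range $\ran(\gamma)=\beta_t(\theta_{t^\ast t})$ lies in the orbit of the principal character $\theta_{t^\ast t}$; since that orbit consists of principal characters (indeed $\ran(\gamma)=\theta_{tt^\ast}$), the argument is complete. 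The main obstacle is getting this directionality right—recognizing that one wants the \emph{largest} admissible idempotent together with a \emph{maximal} $t$—rather than any delicate estimate; the germ-equality lemma and the fact that characters see only the (commuting) idempotents of $S$, which is what makes $\beta_t(\theta_{t^\ast t})$ principal, are the technical heart.
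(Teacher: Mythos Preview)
Your argument is correct. The paper does not actually prove this proposition; it merely quotes it from~\cite[Proposition~5.18]{groupoidprimitive}, so there is no in-paper proof to compare against. Your approach---writing $f=\sum_{s\in T}c_s1_{\Theta_s}$ via the isomorphism $KS\cong K\mathscr G(S)$, identifying germs over principal characters via the lemma $[s,\theta_e]=[t,\theta_e]\iff se=te$, and then picking $t$ maximal in $T$ with $e=t^\ast t$ so that the evaluation collapses to the single term $c_t$---is a clean, self-contained argument and is essentially the standard one. The germ-equality lemma and the verification that $\beta_t(\theta_{t^\ast t})=\theta_{tt^\ast}$ are both correct as you have sketched them.
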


This can also be deduced for the case $K=\mathbb C$ by a result in~\cite{Skandalis}.

Next we aim to characterize when the orbit of a principal character is closed.

\begin{Lemma}\label{l:princ.closed}
Let $S$ be an inverse semigroup and $e\in E(S)$ an idempotent.  Then the following are equivalent.
\begin{enumerate}
\item The orbit of the principal character $\theta_e$ is closed.
\item For each idempotent $f\in E(S)$, there are only finitely many idempotents of the $\mathscr D$-class of $e$ that are below $f$.
\end{enumerate}
Moreover, under these conditions the orbit of $\theta_e$ is discrete in the subspace topology.
\end{Lemma}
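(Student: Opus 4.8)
The plan is to prove the two implications separately, obtaining the ``moreover'' clause for free from the proof that (2) implies (1). Throughout I write $F$ for the set of idempotents in the $\mathscr D$-class of $e$, so that, by the description of orbits recalled above, the orbit of $\theta_e$ is $O=\{\theta_f\mid f\in F\}$. Recall that $\theta_f(g)=1$ iff $g\ge f$, that $\widehat{E(S)}$ is Hausdorff (being a subspace of $\{0,1\}^{E(S)}$), and that each $D(g)$ is compact open. The computation I will use repeatedly is $O\cap D(g)=\{\theta_f\mid f\in F,\ f\le g\}$.

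For (2)$\Rightarrow$(1) together with discreteness: let $\theta\in\overline O$. Since $\theta$ is a nonzero character there is $g$ with $\theta(g)=1$, and (2), applied to the idempotent $g$, says $O\cap D(g)$ is finite. As $D(g)$ is open and $\theta\in D(g)$, every neighbourhood of $\theta$ meets $O\cap D(g)$, so $\theta\in\overline{O\cap D(g)}=O\cap D(g)\subseteq O$, because finite sets are closed in a Hausdorff space. Hence $O$ is closed. For discreteness, fix $f\in F$ and list $\{f'\in F\mid f'\le f\}=\{f,f_1,\dots,f_r\}$, which is finite by (2); then $U=D(f)\setminus\bigcup_j D(f_j)$ is open, contains $\theta_f$, and satisfies $U\cap O=\{\theta_f\}$, so $\theta_f$ is isolated in $O$.

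For (1)$\Rightarrow$(2) I argue by contraposition. Suppose (2) fails, so there is an idempotent $f_0$ with $K:=\{k\in F\mid k\le f_0\}$ infinite. Then $\{\theta_k\mid k\in K\}$ is an infinite subset of the compact Hausdorff space $D(f_0)$. If $O$ were closed, then $\overline{\{\theta_k\mid k\in K\}}\subseteq O\cap D(f_0)=\{\theta_k\mid k\in K\}$, so this set would be compact; thus it suffices to produce a character in its closure that does not lie in $O$. A transparent model is the case where $K$ contains a strictly decreasing sequence $k_1>k_2>\cdots$: then the $\theta_{k_i}$ increase pointwise to the character $\theta_\infty$ defined by $\theta_\infty(g)=1$ iff $g\ge k_i$ for some $i$; one checks $\theta_\infty$ is a nonzero character lying in $\overline O$, and it is not principal, since its support filter $\{g\mid\theta_\infty(g)=1\}$ has no least element. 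Hence $\theta_\infty\in\overline O\setminus O$ and $O$ is not closed.

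The hard part is the general case, where $K$ need not contain a decreasing chain. By the infinite Ramsey theorem $K$ contains either an infinite chain or an infinite antichain; a decreasing chain is handled as above, and the remaining possibilities I would treat by passing to a non-principal ultrafilter $\mathcal U$ on $K$ and forming the ultralimit $\theta_{\mathcal U}$, given by $\theta_{\mathcal U}(g)=1$ iff $\{k\in K\mid k\le g\}\in\mathcal U$, which is automatically a character lying in $\overline O$. The main obstacle — and the only place where the $\mathscr D$-class structure is genuinely needed — is to guarantee that $\theta_{\mathcal U}\notin O$, i.e. that its support filter $\{g\mid\{k\in K\mid k\le g\}\in\mathcal U\}$ either has no least element or has one lying outside $F$; equivalently, that an infinite family of $\mathscr D$-equivalent idempotents below $f_0$ cannot be captured by a single principal character of the orbit. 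I expect this to require choosing $\mathcal U$ so that the sets $\{k\in K\mid k\le g\}$ it contains admit no minimum, which is exactly where the finiteness failure in (2) must be converted into the absence of a least element of the support filter.
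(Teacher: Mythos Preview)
Your argument for (2)$\Rightarrow$(1) is correct and, in fact, cleaner than the paper's: the paper carries out an explicit case analysis on whether $\theta(x)=0$ for all $x\in D$, whereas your closure/compactness argument via $O\cap D(g)$ being finite handles everything at once. The discreteness argument is fine as well.

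The direction (1)$\Rightarrow$(2), however, has a genuine gap, and you have correctly identified where it lies. Your ultrafilter construction does produce a character $\theta_{\mathcal U}$ in the closure of $O$, but showing $\theta_{\mathcal U}\notin O$ in the antichain case cannot be done by pure combinatorics on the filter: nothing so far rules out $\theta_{\mathcal U}=\theta_h$ for some $h\in F$ lying below $f_0$ but outside the chosen antichain. The missing ingredient is the one fact about $\mathscr D$-classes that makes the lemma work: if a $\mathscr D$-class $D$ contains two strictly comparable idempotents $f<f'$, then choosing $s$ with $ss^\ast=f$ and $s^\ast s=f'$ gives a bicyclic submonoid, and the idempotents $s^n(s^\ast)^n$ form an infinite strictly descending chain inside $D$. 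This is exactly the ``$\mathscr D$-class structure'' you sensed was needed.

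The paper exploits this fact directly and avoids Ramsey and ultrafilters entirely. Assuming (1), one first shows that no two idempotents of $D$ are comparable: otherwise the bicyclic chain $f_1>f_2>\cdots$ produces (via your own chain argument) a non-principal limit $\theta_\infty\in\overline O\setminus O$, contradicting closedness. Incomparability immediately gives discreteness, since $D(f')\cap O=\{\theta_{f'}\}$ for each $f'\in D$. Finally, $O\cap D(f)$ is a closed subset of the compact set $D(f)$, hence compact, and discrete, hence finite---which is (2). Your approach can be rescued with the bicyclic fact (any accumulation point of $\{\theta_k:k\in K\}$ inside $O$ would force two comparable idempotents in $D$, reducing to your decreasing-chain case), but the paper's route ``closed $\Rightarrow$ incomparable $\Rightarrow$ discrete $\Rightarrow$ finite'' is shorter and bypasses the Ramsey dichotomy altogether.
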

\begin{proof}
We begin by recalling the well-known fact that if a $\mathscr D$-class $D$ contains two (strictly) comparable idempotents $f<f'$, then $D$ contains a countably infinite, strictly descending chain of idempotents below $f'$.  Indeed, since $f$ is $\mathscr D$-equivalent to $f'$, there is $s\in S$ with $ss^\ast=f$ and $s^\ast s=f'$. Then $s$ generates a bicyclic inverse submonoid of $f'Sf'$ and so the idempotents $f_n=s^n(s^\ast)^n$ with $n\geq 1$ form a strictly descending chain of idempotents in $f'Sf'$, all belonging to $D$.

Let $\mathcal O_e$ be the orbit of $\theta_e$ and let $D$ be the $\mathscr D$-class of $e$.  Suppose first that $\mathcal O_e$ is closed.  We claim that no two idempotents of $D$ are comparable. Indeed, if this is not the case, we have just seen that there is an infinite descending  chain of idempotents $f_1>f_2>\cdots$ in $D$.  Define $\theta\in \widehat{E(S)}$ by \[\theta(x) = \begin{cases}1, & \text{if}\ x\geq f_n, \text{for some}\ n\geq 1\\ 0, & \text{else.}\end{cases}\] We claim that $\theta  =\lim \theta_{f_n}$ and hence belongs to $\ov{\mathcal O_e}$.  Indeed, if $x\in E(S)$, then $\theta(x)=1$ if and only if $x\geq f_n$ for some $n\geq 1$, if and only if there exists $n\geq 1$ so that $x\geq f_m$ for all $m\geq n$, since the $f_i$ form a descending chain.  Thus the sequence $\theta_{f_n}$ converges pointwise to $\theta$.  Since $\mathcal O_e$ is closed, we must have $\theta=\theta_x$ with $x\in D$ an idempotent.  Then since $\theta(f_n)=1$ for all $n\geq 1$, we have $f_n\geq x$ for all $n\geq 1$.  On the other hand, since $\theta_x(x)=1$, we must have $x\geq f_N$ for some $N\geq 1$.  Thus $x=f_N$ and $f_n\geq f_N$ for all $n\geq 1$.  This contradicts that $f_n$ is a strictly descending chain. Thus the idempotents of $D$ are incomparable.

Next we claim that the induced topology on $\mathcal O_e$ is discrete.  Indeed, if $f\in D$, then $D(f)\cap \mathcal O_e=\{\theta_f\}$ since if $f'\in D$ and $\theta_{f'}\in D(f)$, then $f\geq f'$ and hence $f=f'$ by incomparability of idempotents in $D$.    This also proves the ``moreover'' statement.

Now let $f\in E(S)$.  Since $\mathcal O_e$ is closed, $D(f)\cap \mathcal O_e$ is compact.   But $\mathcal O_e$ is also discrete.  Thus $D(f)\cap \mathcal O_e$ is finite.  But if $f'\in D$, then $\theta_{f'}\in D(f)$ if and only if $f'\leq f$.  Thus $f$ is above only finitely  many idempotents of $D$.   This proves that (1) implies (2).

Next assume that each idempotent $f\in E(S)$ is above only finitely many idempotents of $D$.  By the remark in the first paragraph of the proof, we may conclude that idempotents of $D$ are incomparable. Let $\theta\in \widehat{E(S)}\setminus \mathcal O_e$.  Suppose first that $\theta(x)=0$ for all idempotents $x\in D$.  Choose $f\in E(S)$ with $\theta(f)=1$ and let $f_1,\ldots, f_n$ be the finitely many idempotents of $D$ below $f$ (possibly $n=0$).  Then $\theta\in D(f)\setminus \bigcup_{i=1}^nD(f_i)$.  Moreover, if $\theta_x$ belongs to this neighborhood, then $x\leq f$ and $x\neq f_1,\ldots, f_n$ and so $x\notin D$.  Therefore, $(D(f)\setminus \bigcup_{i=1}^n D(f_i))\cap \mathcal O_e=\emptyset$.

If $f'\in D$ is an idempotent with $\theta(f')=1$, then since $\theta\neq \theta_{f'}$, we must have that $\theta(x)=1$ for some $x$ with $x\ngeq f'$.  Then $g=xf'<f'$ and $\theta(g)=1$.  Consider $D(g)$, which is a neighborhood of $\theta$.  If $f\in E(S)$ and $\theta_f\in D(g)$, then $f\leq g<f'$.  Therefore, $f\notin D$ as the idempotents of $D$ are incomparable.   Thus $D(g)\cap \mathcal O_e=\emptyset$.  We deduce that $\mathcal O_e$ is closed.  This completes the proof.
\end{proof}

\begin{Rmk}\label{r:dincorner}
 We remark that if $S$ is an inverse semigroup and $e\in E(S)$ is an idempotent, two idempotents $f,f'\in eSe$ are $\mathscr D$-equivalent in $S$ if and only if they are $\mathscr D$-equivalent in $eSe$.  Indeed, if $ss^\ast=f$ and $s^\ast s=f'$, then $s=ss^\ast s\in fS\cap Sf'\subseteq eS\cap Se = eSe$.
 \end{Rmk}

We may now prove our first main generalization of Kaplansky's theorem.  The characteristic zero case can also be deduced from a result of~\cite{MunnDirect}; we discuss this in more detail after Corollary~\ref{c:Munn.result}.

\begin{Thm}\label{t:finitely.many.idems.D}
Let $K$ be a commutative ring with unit and $S$ an inverse semigroup  such that $eSe$ has finitely many idempotents in each of its $\mathscr D$-classes for all $e\in E(S)$.  Then $KS$ is stably finite if and only if $KG$ is stably finite for each maximal subgroup $G$ of $S$.  In particular, $KS$ is stably finite for any integral domain $K$ of characteristic zero, or if each maximal subgroup of $S$ is sofic.
\end{Thm}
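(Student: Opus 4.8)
The plan is to transport the whole problem to the universal groupoid via the isomorphism $KS\cong K\mathscr G(S)$ and then feed it into Theorem~\ref{t:finite.orbits.improved}. The backward implication is the substantial one; the forward implication is essentially formal. For the forward direction I would observe that each maximal subgroup $G=G_e$ is a subsemigroup of $S$, so $KG_e$ sits inside $KS$ as the $K$-span of $G_e$, which is closed under multiplication and hence a subring. Since any subring of a stably finite ring is stably finite, if $KS$ is stably finite then so is every $KG_e$.

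For the backward direction, assume $KG$ is stably finite for every maximal subgroup $G$ of $S$, and take $\{\mathcal O_a\mid a\in A\}$ to be the family of orbits of the principal characters $\theta_e$, $e\in E(S)$, inside $\mathscr G(S)\skel 0$. Three hypotheses of Theorem~\ref{t:finite.orbits.improved} must be verified: (i) the union of these orbits is $K$-dense; (ii) each such orbit is closed and discrete in the subspace topology; and (iii) $KG$ is stably finite for each isotropy group $G$ of an element of one of these orbits. Item (i) is immediate from Proposition~\ref{p:prin.k.dense}, since the union of the orbits of all principal characters is exactly the set of principal characters. Item (iii) holds because, as recalled before the isomorphism theorem, the isotropy group at $\theta_e$ is precisely the maximal subgroup $G_e$, and these are stably finite by hypothesis.

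The crux is item (ii), for which I would invoke Lemma~\ref{l:princ.closed}: it suffices to show that for each $e\in E(S)$ and each $f\in E(S)$, only finitely many idempotents of the $\mathscr D$-class $D$ of $e$ lie below $f$. This is exactly where the corner hypothesis enters, and it is the main obstacle. Any idempotent $g\in D$ with $g\le f$ satisfies $fgf=g$, hence lies in $fSf$; all such $g$ are $\mathscr D$-equivalent in $S$, so by Remark~\ref{r:dincorner} they are $\mathscr D$-equivalent in the corner $fSf$ and therefore lie in a single $\mathscr D$-class of $fSf$. Applying the standing hypothesis to the corner $fSf$ (which by assumption has only finitely many idempotents in each of its $\mathscr D$-classes) bounds their number, giving condition (2) of Lemma~\ref{l:princ.closed} and hence (ii). This reduction to the corner $fSf$ via Remark~\ref{r:dincorner} is the one genuinely non-formal step; the rest is bookkeeping.

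With (i)--(iii) in hand, Theorem~\ref{t:finite.orbits.improved} yields that $K\mathscr G(S)\cong KS$ is stably finite, completing the backward direction and hence the equivalence. Finally, the ``in particular'' clause follows directly from the corresponding clause of Theorem~\ref{t:finite.orbits.improved}: the relevant isotropy groups are exactly the maximal subgroups of $S$, whose algebras over a characteristic-zero integral domain are stably finite (by Kaplansky's theorem together with Corollary~\ref{c:field.is.c}), and whose algebras over an arbitrary commutative unital $K$ are stably finite when the subgroups are sofic (by Theorem~\ref{t:soficnoeth}).
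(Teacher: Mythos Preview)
Your proof is correct and follows essentially the same route as the paper: reduce to $K\mathscr G(S)$, take the orbits of principal characters, verify $K$-density via Proposition~\ref{p:prin.k.dense}, closedness and discreteness via Lemma~\ref{l:princ.closed}, and identify the isotropy groups with the maximal subgroups, then invoke Theorem~\ref{t:finite.orbits.improved}. Your use of Remark~\ref{r:dincorner} to pass from the corner hypothesis to condition~(2) of Lemma~\ref{l:princ.closed} is exactly what the paper does (the paper phrases it as an equivalence and leaves the details implicit, whereas you spell out the needed direction).
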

\begin{proof}
If $G$ is a maximal subgroup of $S$, then $KG$ is a subring of $KS$ and hence if $KS$ is stably finite, then $KG$ is stably finite.  Let us turn to the converse and suppose that $KG$ is stably finite for each maximal subgroup $G$.

In light of Remark~\ref{r:dincorner}, the hypothesis that $eSe$ has finitely many idempotents in each of its $\mathscr D$-classes is equivalent to each $\mathscr D$-class of $S$ having only finitely many idempotents below $e$.  Therefore, our hypothesis, in conjunction with Lemma~\ref{l:princ.closed}, implies that each orbit of principal characters is closed, and is discrete in the subspace topology.  The isotropy groups of the principal characters are exactly the maximal subgroups by~\cite[Lemma~5.16]{mygroupoidalgebra}.  Since the principal characters are $K$-dense by Proposition~\ref{p:prin.k.dense}, we may apply Theorem~\ref{t:finite.orbits.improved} to deduce that $KS$ is stably finite if $KG$ is stably finite for each maximal subgroup $G$ of $S$.
\end{proof}

As a corollary we generalize and clarify a result of~\cite{MunnDirect}.

\begin{Cor}\label{c:Munn.result}
Let $S$ be an inverse semigroup whose $\mathscr D$-classes each contain finitely many idempotents and $K$ a commutative ring with unit.  Then $KS$ is stably finite if and only if $KG$ is stably finite for each maximal subgroup $G$ of $S$.  In particular, $KS$ is stably finite for any integral domain $K$ of characteristic zero, or if each maximal subgroup of $S$ is sofic.
\end{Cor}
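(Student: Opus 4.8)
The plan is to reduce directly to Theorem~\ref{t:finitely.many.idems.D}: I will show that the hypothesis of the corollary, namely that every $\mathscr D$-class of $S$ contains only finitely many idempotents, forces the formally weaker-looking hypothesis of that theorem, that $eSe$ has finitely many idempotents in each of its $\mathscr D$-classes for every $e\in E(S)$. Since the maximal subgroups of $S$ are precisely the objects appearing in the conclusion of both statements, once this hypothesis is verified the biconditional and the ``in particular'' clause (via Kaplansky and Theorem~\ref{t:soficnoeth}) transfer verbatim, with no further work.

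First I would fix $e\in E(S)$ and recall that the idempotents of the corner $eSe$ are exactly the idempotents $f$ of $S$ with $f\leq e$. Indeed, $e$ is the identity of $eSe$, so an idempotent $f\in eSe$ satisfies $ef=f=fe$, i.e.\ $f\leq e$; conversely every idempotent $f\leq e$ satisfies $f=efe\in eSe$. In particular the set of idempotents of $eSe$ is a subset of the set of idempotents of $S$.

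The key step is to compare $\mathscr D$-classes across the corner. By Remark~\ref{r:dincorner}, two idempotents $f,f'\in eSe$ are $\mathscr D$-equivalent in $eSe$ if and only if they are $\mathscr D$-equivalent in $S$. Consequently, the idempotents lying in a single $\mathscr D$-class of $eSe$ all lie in a single $\mathscr D$-class $D$ of $S$, and hence constitute a subset of the set of idempotents of $D$, which is finite by the hypothesis of the corollary. Thus each $\mathscr D$-class of $eSe$ contains only finitely many idempotents, which is exactly the hypothesis required by Theorem~\ref{t:finitely.many.idems.D}.

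Applying that theorem then gives at once that $KS$ is stably finite if and only if $KG$ is stably finite for each maximal subgroup $G$ of $S$, along with the stated special cases. I do not anticipate a genuine obstacle: the proof is a straightforward strengthening-of-hypothesis argument, and the only point requiring care is the invariance of the $\mathscr D$-relation under passage to corners, which is precisely what Remark~\ref{r:dincorner} supplies.
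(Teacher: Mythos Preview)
Your proposal is correct and follows exactly the same route as the paper's proof: verify that the stronger hypothesis on $S$ implies the hypothesis of Theorem~\ref{t:finitely.many.idems.D} for every corner $eSe$, then invoke that theorem. The paper's version simply says ``clearly'' where you have spelled out the argument via Remark~\ref{r:dincorner}; your added detail is correct and appropriate.
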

\begin{proof}
Clearly, if each $\mathscr D$-class of $S$ has finitely many idempotents, then the same is true for $eSe$ for each idempotent $e\in E(S)$, and so Theorem~\ref{t:finitely.many.idems.D} applies.
\end{proof}

We remark that Theorem~\ref{t:finitely.many.idems.D} can, in fact, be deduced from Corollary~\ref{c:Munn.result}.  Indeed, if $S$ is an inverse semigroup such that $eSe$ has finitely many idempotents in each of its $\mathscr D$-classes for every idempotent $e\in E(S)$, then the same is true for any finitely generated inverse subsemigroup of $S$.  Then since stably finite rings are closed under direct limits and $S$ is the direct limit of its finitely generated inverse subsemigroups, we may assume without loss of generality that $S$ is finitely generated.  But we claim that if $S$ is finitely generated and $eSe$ has finitely many idempotents in each of its $\mathscr D$-class for all $e\in E(S)$, then $S$ has finitely many idempotents in each of its $\mathscr D$-classes. This argument is due to Pedro V.~Silva\footnote{private communication}.   For suppose some $\mathscr D$-class $D$ of $S$ contains infinitely many idempotents.  Let $A$ be a finite set of generators for $S$ closed under $\ast$.  Then infinitely many idempotents of $D$ that can be expressed as a product beginning with the same element $a\in A$.  But then $aa^\ast e=e$, that is, $e\leq aa^\ast$, for any of these idempotents $e$.  In light of Remark~\ref{r:dincorner}, it follows that $aa^\ast Saa^\ast$ has a $\mathscr D$-class with infinitely many idempotents, a contradiction.

Munn proved in~\cite{MunnDirect} that if $S$ is a von Neumann regular monoid (this is a larger class than inverse monoids) having finitely many idempotents in each $\mathscr D$-class, and $K$ is a field of characteristic $0$, or $K$ is any field and each maximal subgroup of $S$ is abelian,  then $KS$ is Dedekind finite.  If $S$ is an inverse semigroup and $B_n$ is the inverse semigroup of $n\times n$-matrix units, together with $0$, then $S\times B_n$ is an inverse semigroup with the same maximal subgroups as $S$ (up to isomorphism) and if $S$ has finitely many idempotents in each of its $\mathscr D$-classes, then the same is true for $S\times B_n$.  But $KB_n\cong M_n(K)\times K$ and so $K[S\times B_n]\cong KS\otimes_K KB_n\cong KS\otimes_K (M_n(K)\times K)\cong M_n(KS)\times KS$.  Thus if $S$ meets the hypothesis of Corollary~\ref{c:Munn.result}, then so does $S\times B_n$  and hence it suffices to prove Dedekind finiteness in Corollary~\ref{c:Munn.result} to obtain stable finiteness (as $M_n(KS)$ is isomorphic to a subring of $K[S\times B_n]$).  But adjoining an identity to an inverse semigroup does not change the property of having finitely many idempotents in each $\mathscr D$-class, and the only new maximal subgroup is trivial,  and so Corollary~\ref{c:Munn.result} for fields of characteristic $0$, or when the maximal subgroups are abelian  and $K$ is any field, follows from Munn's theorem and these trivial observations.  We shall improve on Munn's theorem in the appendix, which will also give an alternate, semigroup theoretic proof of Corollary~\ref{c:Munn.result} (although the fundamental idea is the same).

A Clifford semigroup is an inverse semigroup with central idempotents.  An inverse semigroup is Clifford if and only if each $\mathscr D$-class has exactly one idempotent.  Hence we have the following corollary.

\begin{Cor}\label{c:clifford}
Let $S$ be a Clifford inverse semigroup and $K$ a commutative ring with unit.  Then $KS$ is stably finite if and only if $KG$ is stably finite for each maximal subgroup $G$ of $S$.  In particular, $KS$ is stably finite for any integral domain $K$ of characteristic zero, or if each maximal subgroup of $S$ is sofic.
\end{Cor}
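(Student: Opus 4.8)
The plan is to specialize Corollary~\ref{c:Munn.result}. Since $S$ is Clifford, each $\mathscr D$-class of $S$ contains exactly one idempotent, as recalled just above the statement; in particular each $\mathscr D$-class contains only finitely many idempotents. The hypothesis of Corollary~\ref{c:Munn.result} is therefore satisfied, and it yields at once that $KS$ is stably finite if and only if $KG$ is stably finite for every maximal subgroup $G$ of $S$, together with the two ``in particular'' clauses (characteristic zero via Kaplansky, and the sofic case via Theorem~\ref{t:soficnoeth}). The forward implication is in any case immediate, since each $KG$ is a subring of $KS$ and stable finiteness passes to subrings. There is essentially no obstacle here: all the content lies in the already-established Corollary~\ref{c:Munn.result}.

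A more self-contained route, which sidesteps Lemma~\ref{l:princ.closed} entirely, is to apply the group-bundle Corollary~\ref{c:grp.bund} directly. First I would verify that the universal groupoid $\mathscr G(S)$ is a group bundle. Recall that $\beta_s(\theta)(e)=\theta(s^\ast e s)$ for $\theta\in D(s^\ast s)$. Since the idempotents of $S$ are central, $s^\ast e s = e s^\ast s$, whence $\beta_s(\theta)(e)=\theta(e)\theta(s^\ast s)=\theta(e)$, using that $\theta(s^\ast s)=1$ on $D(s^\ast s)$. Thus $\beta_s$ acts as the identity wherever it is defined, so $\dom=\ran$ on all of $\mathscr G(S)$ and $\mathscr G(S)$ is a group bundle.

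Having established this, I would take $X$ to be the set of principal characters, which is $K$-dense in $\mathscr G(S)\skel 0$ by Proposition~\ref{p:prin.k.dense} and whose isotropy groups are precisely the maximal subgroups of $S$ by~\cite[Lemma~5.16]{mygroupoidalgebra}. Corollary~\ref{c:grp.bund} then gives that $K\mathscr G(S)\cong KS$ is stably finite once each $KG$ is, with the characteristic-zero and sofic cases following from the corresponding clauses there. The only point requiring genuine verification in this alternate argument is the short centrality computation showing $\mathscr G(S)$ is a group bundle; everything else is a direct citation.
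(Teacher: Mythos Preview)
Your first paragraph is exactly the paper's argument: Clifford semigroups have a single idempotent per $\mathscr D$-class, so Corollary~\ref{c:Munn.result} applies directly.

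The alternative route via Corollary~\ref{c:grp.bund} is also correct and is a pleasant observation not made in the paper. Your centrality computation showing $\beta_s(\theta)=\theta$ on $D(s^\ast s)$ is valid, so $\mathscr G(S)$ is indeed a group bundle; taking $X$ to be the principal characters then finishes as you describe. This bypasses Lemma~\ref{l:princ.closed} at the cost of the little calculation, whereas the paper's route needs no extra work at all since Corollary~\ref{c:Munn.result} is already in hand. Both approaches ultimately rest on Theorem~\ref{t:finite.orbits.improved}.
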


A free inverse semigroup (or monoid) is well known to have finitely many idempotents in each $\mathscr D$-class and trivial maximal subgroups, and  hence has a stably finite algebra over any commutative ring by Corollary~\ref{c:Munn.result}.

\begin{Cor}
If $S$ is a free inverse semigroup or monoid, then $KS$ is stably finite for any commutative ring with unit $K$.
\end{Cor}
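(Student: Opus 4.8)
The plan is to deduce this directly from Corollary~\ref{c:Munn.result}. Write $S=\mathrm{FIS}(X)$ for the free inverse semigroup (or monoid) on a set $X$; it suffices to verify the two hypotheses of that corollary, namely that each $\mathscr D$-class of $S$ contains only finitely many idempotents and that every maximal subgroup of $S$ is trivial. Both are classical facts that I would read off from Munn's combinatorial model of $S$, which I recall first. Following Munn (see~\cite{Lawson}), the elements of $S$ are in bijection with \emph{birooted trees} $(P,\alpha,\beta)$, where $P$ is a finite connected subtree of the Cayley graph of the free group $FG(X)$ --- itself a tree, since $X$ freely generates $FG(X)$ --- and $\alpha,\beta$ are two distinguished vertices of $P$. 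In this model the involution is $(P,\alpha,\beta)^\ast=(P,\beta,\alpha)$, the product is computed by translating and overlaying trees, and $(P,\alpha,\beta)$ is idempotent precisely when $\alpha=\beta$; thus an idempotent is recorded by a singly rooted tree $(P,v)$.

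To verify the first hypothesis, recall that two elements of $S$ are $\mathscr D$-equivalent exactly when their underlying trees are isomorphic as $X$-edge-labeled trees (equivalently, are translates of one another in $FG(X)$), irrespective of the roots. Hence, for an idempotent $(P,v)$, the idempotents $\mathscr D$-equivalent to it are precisely those of the form $(P,w)$ with $w$ a vertex of $P$: indeed $(P,v)$ and $(P,w)$ are linked by $s=(P,v,w)$, which satisfies $ss^\ast=(P,v)$ and $s^\ast s=(P,w)$, while an idempotent supported on a non-isomorphic tree cannot occur. Since $P$ is a finite tree, there are at most $|V(P)|$ such idempotents, so every $\mathscr D$-class of $S$ contains only finitely many idempotents.

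For the second hypothesis, the maximal subgroup $G_e$ at an idempotent $e=(P,v)$ consists of those $s$ with $s^\ast s=e=ss^\ast$; in the tree model such an $s$ has underlying tree $P$ with both of its roots equal to $v$, whence $s=(P,v,v)=e$. Thus every maximal subgroup of $S$ is trivial, so that $KG=K$ for each maximal subgroup $G$, and $K$ is stably finite because every commutative ring is stably finite, as noted earlier in this section. Both hypotheses of Corollary~\ref{c:Munn.result} now hold, and we conclude that $KS$ is stably finite. Passing between the free inverse semigroup and the free inverse monoid changes neither property, since adjoining the empty-tree identity contributes only the trivial maximal subgroup and leaves the $\mathscr D$-classes otherwise unaffected.

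The main obstacle, such as it is, resides entirely in Munn's representation theorem: once the birooted-tree model and its description of Green's relations are granted, both verifications are mechanical bookkeeping. In a fully self-contained treatment the real work would be in establishing that model and its $\mathscr D$-relation description, which is precisely the content of Munn's solution to the word problem for free inverse semigroups; being standard, I would simply invoke~\cite{Lawson} rather than reproving it here.
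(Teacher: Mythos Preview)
Your proof is correct and takes essentially the same approach as the paper: both deduce the result from Corollary~\ref{c:Munn.result} by invoking that free inverse semigroups have finitely many idempotents in each $\mathscr D$-class and trivial maximal subgroups. The paper simply asserts these two facts as ``well known'' without further justification, whereas you supply a detailed verification via Munn's birooted-tree model; your added detail is sound but not strictly required for the level at which the paper operates.
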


An obvious necessary condition for $KS$ to be stably finite is for $S$ to not contain a subsemigroup isomorphic to the bicyclic monoid, that is, for each idempotent $e\in E(S)$, every left invertible element of $eSe$ is also right invertible.  Such inverse semigroups are sometimes called completely semisimple in the literature.  I do not know an example of a completely semisimple inverse semigroup whose semigroup algebra is not stably finite.

\subsection{Leavitt path algebras}
Let $E=(E\skel 0,E\skel 1)$ be a (directed) graph (loops and multiple edges allowed) with vertex set $E\skel 0$ and edge set $E\skel 1$. We write $\dom(e)$ for the initial vertex of an edge (or path) and $\ran(e)$ for the terminal vertex.  Following the tradition of graph theory we compose paths from left to right, so $pp'$ is valid if $\ran(p)=\dom(p')$. In what follows we shall denote the empty path at a vertex $v$ by $v$, abusing notation.

 Our goal is to prove that if $K$ is a commutative ring with unit, then the Leavitt path algebra $L_K(E)$ is stably finite if and only if the graph $E$ is a no-exit graph. 
A graph $E$ is called \emph{no-exit} if no cycle has an exit (i.e., each vertex on a cycle has out-degree $1$).  A cycle here means a nonempty closed path in which the terminal vertex is the first repeated vertex.  This is essentially a result of Va\v{s}~\cite{Vasdirect} and was proven without using groupoids. She works with traces over $\ast$-rings that are injective on idempotents.
 Like Va\v{s}, we work in the setting of relatived Cohn path algebras in order to take direct limits. Our first task is then to realize relatived Cohn path algebras in the sense of~\cite[Section~1.5]{LeavittBook} as ample groupoid algebras.

 Leavitt path algebras are usually defined by generators and relations, but we will define them as quotients of semigroup algebras of graph inverse semigroups (these semigroup algebras are often called Cohn path algebras by people in the Leavitt path algebra community).
Let $P_E$ be the graph inverse semigroup associated to $E$~\cite{Paterson,JonesLawson}.  If $E^\ast$ denotes the set of all finite paths in $E$ (including empty paths), then $P_E$ can be identified with the inverse semigroup of all partial bijections of $E^\ast$ of the form $qx\mapsto px$ (denoted $pq^\ast$) with $p,q$ paths ending at the same vertex together with the empty mapping.   If $q$ is an empty path, we usually write $p$ and if $p$ is an empty path, we normally write $q^\ast$.  If $p,q$ are both empty paths at $v$, we just write $v$ for the corresponding element.   Note that $pq^\ast$ is really the composition of $p$ and $q^\ast$, and $q^\ast$ is the inverse partial bijection to $q$.
One can present $P_E$ as a $\ast$-semigroup with zero with generating set $E\skel 0\cup E\skel 1$ and relations saying that: $E\skel 0$ is an orthogonal set of idempotents; if $e$ is an edge, then $\dom(e)e=e=e\ran(e)$ and $e^\ast e=\ran(e)$; and $f^\ast e=0$ if $f\neq e$ for $e,f\in E\skel 1$.

 Finite paths in $E$ are in bijection with nonzero idempotents of $P_E$ via $p\mapsto pp^\ast$.  Moreover, $pp^\ast\leq qq^\ast$ if and only if $q$ is a prefix (initial segment) of $p$.  Note that $\mathscr G(P_E)$ is Hausdorff since $P_E$ is \emph{$0$-$E$-unitary} (cf.~\cite{JonesLawson}), meaning that  $0$ is the only idempotent below a non-idempotent. Indeed, if $rr^\ast\leq pq^\ast$, then $rr^\ast = rr^\ast pq^\ast$.  If $r$ is a prefix of $p$, we get $rr^\ast = pq^\ast$, and so $p=r$ and $q=r$, whereas if $p$ is a prefix of $r$, say $r=ps$, then $rr^\ast pq^\ast = rs^\ast q^\ast$ and so $r=qs$, whence $p=q$.  In either case, $pq^\ast$ is an idempotent.  It follows that $\mathscr G(P_E)$ is Hausdorff by~\cite[Theorem~5.17]{mygroupoidalgebra}.

 We write $|p|$ for the length of a finite path $p$ in $E$.

The characters of $E(P_E)$ are well known.  There is the principal character $\theta_0$ associated to $0$, which is identically $1$.  Also, for each finite path $p$ in $E$ there is the principal character $\theta_{pp^\ast}$, which we shall abbreviate to $\theta_p$, and for each right infinite path $p$ in $E$, there is a character $\theta_p$ with $\theta_p(qq^\ast)=1$ if and only if $q$ is a prefix  of $p$.  This follows easily from the fact that if neither $q$ nor $r$ are prefixes of each other, then $qq^\ast rr^\ast=0$, and so for any character $\theta\neq \theta_0$, we have either $\theta$ is principal, or, for each $n\geq 0$, there is a unique path $p_n$ of length $n$ with $\theta(p_np_n^\ast)=1$ and $p_{n-1}$ is a prefix of $p_n$.  It follows that $\theta=\theta_p$ where $p$ is the unique infinite path whose prefix of length $n$ is $p_n$ for all $n\geq 0$.  One can check that if $pq^\ast\in P_E$, then $\beta_{pq^\ast}$ is defined on $\theta_0$ and those $\theta_r$ with $q$ a prefix of $r$.  Moreover,  has $\beta_{pq^\ast}(\theta_0) =\theta_0$ and $\beta_{pq^\ast}(\theta_{qz})=\theta_{pz}$ for a finite or infinite path $z$ with initial vertex $\ran(p)$. Note that $\beta_0$ is only defined on $\theta_0$ and fixes it.

A vertex $v$ of $E$ is \emph{regular} if it is not a sink and not an infinite emitter.  If $X$ is a set of regular vertices, then the \emph{Cohn path algebra of $E$ relative to $X$} is $C_K^X(E)=KP_E/I_X$ where $I_X$ is generated by the zero of $P_E$ and the idempotents $v-\sum_{\dom(e)=v}ee^\ast$ with $v\in X$.  If $X$ consists of all the regular vertices, then the Cohn path algebra of $E$ relative to $X$ is the \emph{Leavitt path algebra} $L_K(E)$.  See the book~\cite{LeavittBook} for more on Leavitt path algebras.

Notice that the set of principal characters $\theta_p$ of finite  paths $p$  ending at a vertex of $X$ form an invariant subset since two idempotent $pp^\ast$ and $qq^\ast$ are $\mathscr D$-equivalent if and only if $p,q$ end at the same vertex.  Moreover, the principal character associated to any finite path  $p$ ending at a regular vertex $v$ is isolated.  Indeed, if  $e_1,\ldots, e_n$ are the finitely many edges leaving $v$, then $D(pp^*)\setminus \bigcup_{i=1}^n D(pe_i(pe_i)^*)$ is an open set containing only $\theta_p$.  Also $\theta_0$ is an isolated point as $D(0) = \{\theta_0\}$ and is a singleton orbit since $0$ is not $\mathscr D$-equivalent to any other idempotent.   Thus if \[Y = \widehat{E(P_E)}\setminus (\{\theta_p\mid \ran(p)\in X\}\cup \{\theta_0\}),\] then $Y$ is a closed invariant subset of $\widehat{E(P_E)}$, and so $\mathscr G_{E,X}=\mathscr G(P_E)|_Y$  is an ample groupoid.

\begin{Lemma}\label{l:find.path}
Let $v$ be a vertex.  Then $D(v)\cap Y\neq \emptyset$.
\end{Lemma}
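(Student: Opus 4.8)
The plan is to exhibit an explicit character in $D(v)\cap Y$ by a greedy path construction, after first translating the two defining conditions into path language. The key observation is that for a finite or right-infinite path $p$ with $\dom(p)=v$, the empty path $v$ is a prefix of $p$, so $\theta_p(v)=1$ and hence $\theta_p\in D(v)$. Moreover such a $\theta_p$ lies in $Y$ as soon as $p$ is infinite, or $p$ is finite with $\ran(p)\notin X$: in both situations $\theta_p$ is neither $\theta_0$ nor the principal character of a finite path ending in $X$, and these are exactly the characters deleted when forming $Y$. Thus it suffices to produce, starting at $v$, a path that is either right-infinite or finite and terminating outside $X$.

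If $v\notin X$, the empty path at $v$ already works: then $\theta_v\in D(v)$ and $\ran(v)=v\notin X$, so $\theta_v\in D(v)\cap Y$. If $v\in X$, I would build a path greedily, using that every vertex of $X$ is \emph{regular} and in particular not a sink. Setting $v_0=v$, at each stage, as long as the current vertex $v_n$ lies in $X$, I pick an edge $e_{n+1}$ with $\dom(e_{n+1})=v_n$ (possible since $v_n$ is not a sink) and set $v_{n+1}=\ran(e_{n+1})$; the process stops if some $v_n\notin X$. Either it halts, giving a finite path $e_1\cdots e_n$ with terminal vertex $v_n\notin X$, or it runs forever, giving a right-infinite path $e_1e_2\cdots$ starting at $v$. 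In both cases the associated character lies in $D(v)\cap Y$ by the first paragraph.

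The argument is elementary and I do not anticipate a genuine obstacle; the only input beyond routine bookkeeping is the definition of a regular vertex, which guarantees the greedy extension never stalls while it is still inside $X$. The one point to state carefully is the description of which characters survive in $Y$: the non-$\theta_0$ characters excluded from $Y$ are precisely the principal characters of finite paths ending in $X$, so that any infinite-path character, or any finite-path character ending outside $X$, automatically qualifies.
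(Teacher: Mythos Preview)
Your proof is correct and follows essentially the same approach as the paper: both arguments exhibit a character $\theta_p\in D(v)\cap Y$ by either finding a finite path from $v$ to a vertex outside $X$, or, failing that, building a right-infinite path using that every vertex of $X$ is not a sink. Your greedy construction is just a slightly more explicit phrasing of the paper's case split.
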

\begin{proof}
If there is path $p$ (possibly empty) from $v$ to a vertex $w\notin X$, then $\theta_p(v)=1$ and $\theta_p\in Y$.  Else, every vertex reachable from $v$ (including itself) belongs to $X$.  Since each vertex of $X$ is not a sink, it follows that we can build an infinite path $p$ starting at $v$.  Then $\theta_p\in Y$ and $\theta_p(v)=1$.
\end{proof}

 The following proposition generalizes the well-known realization of Leavitt path algebras as ample groupoid algebras, cf.~\cite{groupoidapproachleavitt}.  The approach here seems different than what is in the literature for Leavitt path algebras in full generality in that we avoid graded uniqueness theorems.

\begin{Prop}\label{p:iso:cohn}
Let $K$ be a commutative ring with unit, $E$ a graph and $X$ a set of regular vertices of $E$.  Then
$C_K^X(E)\cong K\mathscr G_{E,X}$.
\end{Prop}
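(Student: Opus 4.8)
The plan is to combine the isomorphism $KP_E\cong K\mathscr G(P_E)$ (the special case of $KS\cong K\mathscr G(S)$) with the quotient description $K\mathscr G/K\mathscr G|_U\cong K\mathscr G|_{X}$ recalled earlier for an invariant open $U$, so that everything reduces to matching the defining ideal $I_X$ of $C_K^X(E)$ with the ideal $K\mathscr G(P_E)|_U$, where $U=\widehat{E(P_E)}\setminus Y=\{\theta_p\mid \ran(p)\in X\}\cup\{\theta_0\}$. First I would record that $U$ is open and invariant. Each $\theta_p$ with $\ran(p)\in X$ is isolated by the computation already noted (namely $D(pp^\ast)\setminus\bigcup_i D(pe_i(pe_i)^\ast)=\{\theta_p\}$ for the edges $e_i$ out of the regular vertex $\ran(p)$), and $\theta_0$ is isolated, so $U$ is open; and $U$ is a union of orbits, being the orbit of $\theta_0$ together with the orbits of the $\theta_v$ ($v\in X$), since the orbit of $\theta_v$ consists of exactly the $\theta_q$ with $\ran(q)=v$. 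Hence $Y$ is closed and invariant and the quotient isomorphism $K\mathscr G(P_E)/K\mathscr G(P_E)|_U\cong K\mathscr G_{E,X}$ applies.

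Next I would compute the images of the generators of $I_X$ under $KP_E\cong K\mathscr G(P_E)$, where an idempotent $e\in E(P_E)$ maps to $1_{D(e)}$ and a general $s$ maps to the indicator $1_{(s,D(s^\ast s))}$ of its germ bisection. The zero of $P_E$ maps to $1_{D(0)}=1_{\{\theta_0\}}$, and since the $e_ie_i^\ast\leq v$ are pairwise orthogonal (contained in $D(v)$ and mutually disjoint), the generator $v-\sum_{\dom(e)=v}ee^\ast$ maps to $1_{D(v)}-\sum_i1_{D(e_ie_i^\ast)}=1_{D(v)\setminus\bigcup_i D(e_ie_i^\ast)}=1_{\{\theta_v\}}$. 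As $\theta_0,\theta_v\in U$, both images lie in the ideal $K\mathscr G(P_E)|_U$, so $I_X$ maps into $K\mathscr G(P_E)|_U$.

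For the reverse inclusion I would use that $U$ is discrete, whence $\mathscr G(P_E)|_U$ is a discrete groupoid and $K\mathscr G(P_E)|_U$ is spanned by the indicators $1_{\{\gamma\}}$ of single arrows $\gamma$ over $U$. Writing $\gamma=[s,\theta]$ with $\theta=\dom(\gamma)\in U$, one has $1_{\{\gamma\}}=1_{(s,D(s^\ast s))}\ast 1_{\{\theta\}}$, so it suffices to show each unit $1_{\{\theta\}}$ with $\theta\in U$ lies in the ideal $J$ generated by the images of the $I_X$-generators. This is immediate for $\theta=\theta_0$, and for $\theta=\theta_q$ with $\ran(q)=v\in X$ one has $\theta_q=\beta_q(\theta_v)$ and $1_{\{\theta_q\}}=1_{(q,D(v))}\ast 1_{\{\theta_v\}}\ast 1_{(q,D(v))^\ast}\in J$, the maximal subgroups of $P_E$ being trivial so that no further generators are needed. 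Thus $K\mathscr G(P_E)|_U\subseteq J$, giving equality, and therefore
\[
C_K^X(E)=KP_E/I_X\cong K\mathscr G(P_E)/K\mathscr G(P_E)|_U\cong K\mathscr G_{E,X}.
\]

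The routine parts are the identifications of the generator images and the openness/invariance bookkeeping. The main obstacle I anticipate is the reverse inclusion $K\mathscr G(P_E)|_U\subseteq J$: one must genuinely exploit the orbit structure (the orbit of $\theta_v$ being exactly $\{\theta_q\mid\ran(q)=v\}$) together with the triviality of the isotropy to produce every arrow indicator of $\mathscr G(P_E)|_U$ from the two-sided ideal generated by the point masses $1_{\{\theta_0\}}$ and $1_{\{\theta_v\}}$. Verifying that the discrete-groupoid algebra $K\mathscr G(P_E)|_U$ is spanned by single-arrow indicators, and that each such indicator factors as a product of algebra elements with one of these point masses, is where the real content of the argument lies.
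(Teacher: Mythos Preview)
Your approach is correct in spirit and genuinely different from the paper's, but there is a small computational slip you should fix. You assert that the sets $D(e_ie_i^\ast)$ are mutually disjoint; they are not, because $\theta_0$ lies in every $D(e_ie_i^\ast)$ (indeed $\theta_0$ is identically $1$). Consequently the image of $v-\sum_{\dom(e)=v}ee^\ast$ is not $1_{\{\theta_v\}}$ but rather $1_{\{\theta_v\}}-(n-1)\,1_{\{\theta_0\}}$, where $n$ is the out-degree of $v$. This does not damage the argument: the image is still supported in $U$, so the forward inclusion holds, and since $1_{\{\theta_0\}}$ already lies in $J$ you still recover $1_{\{\theta_v\}}\in J$ for the reverse inclusion. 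Once this is corrected, your proof goes through.

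As for the comparison: the paper proceeds by invoking a general presentation of $K\mathscr G_{E,X}$ as $KP_E$ modulo the ideal generated by all products $\prod_{i=1}^n(f-f_i)$ for which $D(f)\setminus\bigcup D(f_i)$ misses $Y$, and then shows by an inductive argument on a tree of prefixes that this ideal coincides with $I_X$. Your route instead exploits the exact sequence $K\mathscr G(P_E)|_U\hookrightarrow K\mathscr G(P_E)\twoheadrightarrow K\mathscr G_{E,X}$ and the isomorphism $KP_E\cong K\mathscr G(P_E)$, reducing the question to identifying the ideal $K\mathscr G(P_E)|_U$ with the image of $I_X$. Because $U$ is discrete, that ideal is transparently spanned by single-arrow indicators, and the orbit description lets you manufacture each unit point mass from the generators of $I_X$ by conjugation. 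Your argument is shorter and avoids the combinatorial tree induction, at the cost of leaning on the discreteness of $U$ (which is special to this situation). The paper's route, by contrast, illustrates a more general technique for computing the kernel of a restriction map $K\mathscr G\to K\mathscr G|_Y$ directly in terms of the semilattice, which may be of independent interest.
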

\begin{proof}
It follows from~\cite[Proposition~5.2]{groupoidprimitive} that $K\mathscr G_{E,X}$ is isomorphic to the quotient of $KP_E$ by the ideal $I$ generated by all products $\prod_{i=1}^n(f-f_i)$ with $f_1,\ldots, f_n\leq f\in E(P_E)$ and $D(f)\setminus (D(f_1)\cup\cdots \cup D(f_n))\cap Y=\emptyset$.  We show that this ideal coincides with $I_X$.

 Since $D(0)\cap Y=\emptyset$, we have that the zero of $P_E$ belongs to $I$.  If $v\in X$ and $e_1,\ldots, e_n$ are all the edges with initial vertex $v$, then $D(v)\setminus \bigcup_{i=1}^n D(e_ie_i^\ast)=\{\theta_v\}$, and $\theta_v\notin Y$ since $v\in X$.  Thus $\prod_{i=1}^n(v-e_ie_i^\ast)\in I$.  But noting that $e_ie_i^\ast e_je_j^\ast =0$ in $P_E$ for $i\neq j$, we deduce that this product is $v-\sum_{i=1}^ne_ie_i^\ast +a$ where $a$ is a scalar multiple of the zero of $P_E$. Since the zero of $P_E$ belongs to $I$, we deduce that $v-\sum_{i=1}^ne_ie_i^\ast\in I$.   It follows that $I_X\subseteq I$.  This argument also shows that $\prod_{i=1}^n(v-e_ie_i^\ast)\in I_X$ since $I_X$ also contains the zero of $P_E$.

Denote also by $\beta\colon P_E\to I(Y)$ the homomorphism obtained by restricting the action of $P_E$ on $\widehat{E(P_E)}$ to the invariant subspace $Y$.

First we claim that if $D(v)\setminus \bigcup_{i=1}^nD(p_ip_i^\ast)\cap Y=\emptyset$ with $p_1,\ldots,p_n$ non\-emp\-ty, then $\prod_{i=1}^n(v-p_ip_i^\ast)\in I_X$.  Note that if $p_i$ is a prefix  of $p_j$, then $(v-p_ip_i^\ast)(v-p_jp_j^\ast) = v-p_ip_i^\ast$.  Also, $D(p_jp_j^\ast)\subseteq D(p_ip_i^\ast)$,  and so we may remove $p_jp_j^\ast$ without changing anything.  Thus, without loss of generality, we may assume that the elements of $p_1,\ldots, p_n$ are prefix incomparable. After reordering, we may assume that $p_n$ has maximum length amongst the $p_i$.

The Hasse diagram of the set of finite paths starting at $v$ with respect to the prefix ordering is a rooted tree (with root $v$) and since each path has only finitely many prefixes, two paths are prefix comparable  if and only if there is a directed path in the Hasse diagram from one to the other.  Let $T$ be the finite subtree whose vertices are the (not necessarily proper) prefixes of $p_1,\ldots, p_n$. Notice that each of $p_1,\ldots, p_n$ are leaves of $T$ by prefix incomparability.   We prove that $\prod_{i=1}^n(v-p_ip_i^\ast)\in I_X$ by induction on the number of internal (non-leaf) vertices of $T$.

Since $D(v)\cap Y\neq \emptyset$ by Lemma~\ref{l:find.path}, we must have $n\geq 1$, and hence $v$ is an internal vertex.    First we claim that if $p$ is an internal vertex of $T$, then the endpoint of $p$ belongs to $X$.  For otherwise, $\theta_p\in Y\cap D(v)\setminus \bigcup_{i=1}^nD(p_ip_i^\ast)$.

Write $p_n=pe_0$ with $e_0$ an edge.  Since the $p_i$ are prefix incomparable, $p\neq p_i$ for any $i=1,\ldots, n$. Let $w=\ran(p)$ and note that $w\in X$ by what we have already observed.  Let $e$ be any edge with initial vertex $w$ (there is at least one, and finitely many, since $w$ is regular).  We claim that $pe=p_i$ for some $i$.  Indeed, let $w'$ be the endpoint of $e$.  Then by Lemma~\ref{l:find.path} we can find a character $\theta\in Y$ with $\theta(w')=1$.  Since $w'= (pe)^\ast (pe)$ and $Y$ is invariant, we deduce that if $\theta'=\beta_{pe}(\theta)$, then $\theta'\in Y$ and $\theta'((pe)(pe)^\ast)=1$.  Therefore, $\theta'\in D(v)$ and hence we must have $\theta'\in D(p_ip_i^\ast)$ for some $i$.  Thus $pe$ and $p_i$ are prefix comparable (they are both prefixes of some finite or infinite path).  We cannot have $p_i$ a proper prefix of $pe$ or we would have that $p_i$ is a proper prefix of $p_n$.  Thus $pe$ is a prefix of some $p_i$. Since $p_n$ was chosen to have maximum length, $|pe|=|p_n|\geq |p_i|$ and so we deduce that $p_i=pe$.  Note that if $p$ is a prefix of $p_j$, then $p_j=pe$  for some edge $e$ since $p_j\neq p$ (as $p_j$ is not a proper prefix of $p_n$), and so $|p|< |p_j|\leq |p_n|=|p|+1$.   If $p=v$, then what we have just shown is that $p_1,\ldots, p_n$ are precisely the edges emitted by $v$ and hence we have already seen that  $\prod_{i=1}^n (v-p_ip_i^\ast)\in I_X$ in the second paragraph of the proof. This is also exactly the case when there is one internal node and so it is the base case of the induction.  So we may now assume that $p\neq v$.

Let us reorder the set $p_1,\ldots, p_n$ so that $p$ is not a prefix of $p_1,\ldots, p_r$ (with $r\geq 0$) and is a prefix of $p_{r+1},\ldots, p_n$ and note that
\begin{equation}\label{eq:already.saw}
pp^\ast-\sum_{i=r+1}^np_ip_i^\ast= p(w-\sum_{\dom(e)=w}ee^\ast)p^\ast\in I_X
\end{equation}
 by what we have just shown, and since $w\in X$.  We claim that if \[Z=D(v)\setminus (D(p_1p_1^\ast)\cup\cdots\cup D(p_rp_r^\ast)\cup D(pp^\ast)),\] then $Z\cap Y=\emptyset$.     Indeed, if $\theta\in Y$ with $\theta(v)=1$, then there is some $i=1,\ldots, n$ with $\theta(p_ip_i^\ast)=1$.  If $1\leq i\leq r$, then $\theta\notin Z$.  If $\theta(p_ip_i^\ast)=1$ with $r+1\leq i\leq n$, then $p$ is a prefix of $p_i$ and so $\theta(pp^\ast)=1$.  Thus $\theta\notin Z$.  Note that the paths $p_1,\ldots, p_r,p$ are prefix incomparable by construction.  Since the tree $T'$ associated to $p_1,\ldots, p_r,p$ is obtained from $T$ by removing $p_{r+1},\ldots, p_n$, which are precisely the children of $p$, it has fewer internal nodes.  Since $p\neq v$,  $T'$ has at least one internal node.   Thus, by induction, we have $\prod_{i-1}^r(v-p_ip_i^\ast)(v-pp^\ast)\in I_X$. Since the zero of $P_E$ belongs to $I_X$ and the paths $p_1,\ldots, p_r,p$ are prefix incomparable, $I_X=\prod_{i-1}^r(v-p_ip_i^\ast)(v-pp^\ast)+I_X = v-\sum_{i=1}^rp_ip_i^\ast-pp^\ast+I_X$. But $pp^\ast+I_X = \sum_{i=r+1}^np_ip_i^\ast+I_X$ by \eqref{eq:already.saw}, and hence $v-\sum_{i=1}^np_ip_i^\ast\in I_X$.  But again, using that $p_1,\ldots, p_n$ are prefix incomparable and that $I_X$ contains the zero of $P_E$, we deduce that $\prod_{i=1}^n(v-p_ip_i^\ast) +I_X=v-\sum_{i=1}^n p_ip_i^\ast+I_X=I_X$.

To complete the proof that $I\subseteq I_X$, suppose now that \[Z=D(pp^\ast)\setminus \bigcup_{i=1}^nD(pp_i(pp_i)^\ast)\cap Y=\emptyset\] with $v$ the initial vertex of $p$ and $p_1,\ldots, p_n$ nonempty.  Then since $Y$ is invariant, $\emptyset =\beta_{p^\ast}(Z) = D(v)\setminus \bigcup_{i=1}^n D(p_ip_i^\ast)\cap Y$, and so by the previous case $a=\prod_{i=1}^n(v-p_ip_i^\ast)\in I_X$.  But then $\prod_{i=1}^n(pp^\ast -pp_i(pp_i)^\ast) = pap^\ast\in I_X$.  This completes the proof that $I=I_X$ and hence $C_K^X(E)=KP_E/I_X\cong K\mathscr G_{E,X}$.
\end{proof}

Note that the isomorphism in~\cite[Proposition~5.2]{groupoidprimitive}, in our setting, sends the coset $v+I_X$, with $v\in E_0$, to $1_{D(v)\cap Y}\in K\mathscr G_{E,X}$.  Since $D(v)\cap Y\neq \emptyset$ by Lemma~\ref{l:find.path}, we deduce that $v+I_X\neq I_X$.  Hence, if $e$ is an edge, then $e+I_X\neq I_X$, since if that were the case and if $v=\ran(e)$, then $v+I_X = e^\ast e+I_X =I_X$, which contradicts our previous observation.

We need one last lemma, which is essentially well known, except that people usually work with the path groupoid (cf.~\cite{groupoidapproachleavitt}) of $E$, which is topologically isomorphic to $\mathscr G(P_E)$.

\begin{Lemma}\label{l:isotropy}
Let $\theta\in \widehat{E(P_E)}$.  Then the isotropy group of $\theta$ is trivial unless $\theta=\theta_r$ where $r$ is an infinite path of the form $bccc\cdots$ with $b$ a finite path and $c$ a nonempty closed path, in which case the isotropy group is infinite cyclic.
\end{Lemma}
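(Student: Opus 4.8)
The plan is to compute the isotropy group germ-by-germ according to the type of the character $\theta$, using the explicit description of the action $\beta$ of $P_E$ recalled above. Recall that every nonzero element of $P_E$ has the form $pq^\ast$ with $p,q$ finite paths ending at a common vertex, that $\beta_{pq^\ast}$ is defined at $\theta_r$ exactly when $q$ is a prefix of $r$, and that then $\beta_{pq^\ast}(\theta_r)=\theta_{pz}$ where $r=qz$. Composition of germs in the isotropy group is simply $[s,\theta_r][t,\theta_r]=[st,\theta_r]$. The character $\theta_0$ is disposed of immediately: since $0\le s$ for every $s\in P_E$ (taking the idempotent $0$ witnesses $s\cdot 0=0$) and $\theta_0\in D(0)$ with $\beta_0$ fixing $\theta_0$, every germ $[s,\theta_0]$ equals the identity germ $[0,\theta_0]$, so the isotropy at $\theta_0$ is trivial. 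For the remaining characters, write $\theta=\theta_r$ with $r$ a finite or right-infinite path.

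First I would record the general shape of an isotropy element at $\theta_r$. A germ $[pq^\ast,\theta_r]$ lies in the isotropy group exactly when $q$ is a prefix of $r$, say $r=qz$, and $\beta_{pq^\ast}(\theta_r)=\theta_{pz}=\theta_r$, i.e.\ $pz=r=qz$. When $r$ is a \emph{finite} path the suffix $z$ is finite, so $pz=qz$ forces $p=q$ by right cancellation of paths; then $pq^\ast=pp^\ast$ is idempotent and $[pq^\ast,\theta_r]$ is the identity germ. Hence the isotropy is trivial for every finite path $r$, i.e.\ for every principal character of a finite path (this also reflects the triviality of the maximal subgroups of $P_E$). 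This leaves the case that $r$ is right-infinite.

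For right-infinite $r$ the plan is to build an injective homomorphism from the isotropy group into $\mathbb Z$ by the \emph{length defect}
\[
\delta\colon [pq^\ast,\theta_r]\longmapsto |p|-|q|.
\]
To see it is well defined I would use the description of the natural order on $P_E$: a nonzero element $rs^\ast$ satisfies $rs^\ast\le pq^\ast$ iff $r=pw$ and $s=qw$ for a common path $w$, which preserves $|p|-|q|$; since two germ-equivalent isotropy elements share a common lower bound $u$ with $\theta_r\in D(u)$, and $\theta_r\ne\theta_0$ forces $u\ne 0$, the value $|p|-|q|$ is an invariant of the germ. Additivity of $\delta$ follows from a short computation of the product $pq^\ast p'q'^\ast$ in $P_E$: the relevant cases, according to the prefix comparison of $q$ and $p'$, all yield defect $(|p|-|q|)+(|p'|-|q'|)$. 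Finally $\delta$ is injective: if $|p|=|q|$ then $pz=qz$ gives $p=q$ exactly as in the finite case, so the germ is the identity. Consequently the isotropy group at $\theta_r$ is isomorphic to a subgroup of $\mathbb Z$, hence either trivial or infinite cyclic.

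It remains to decide when the image of $\delta$ is nonzero, and this combinatorial step is the real heart of the argument. If $[pq^\ast,\theta_r]$ is an isotropy element with $|p|\ne|q|$, say $|p|>|q|$, then comparing the first $|q|$ edges of the equal infinite paths $pz=qz$ shows $q$ is a prefix of $p$, so $p=qc$ with $c$ a closed path at $\ran(q)$ of length $|p|-|q|\ge 1$; substituting, $cz=z$ forces $z=ccc\cdots$ and hence $r=qz=q\,ccc\cdots$, of the asserted form $bccc\cdots$ with $b=q$. Conversely, if $r=bccc\cdots$ with $c$ a nonempty closed path, then $(bc)b^\ast$ fixes $\theta_r$ and $\delta([(bc)b^\ast,\theta_r])=|c|>0$, so the image is nonzero. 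Since a nonzero subgroup of $\mathbb Z$ is infinite cyclic, the isotropy group is infinite cyclic in this case, while if $r$ is not eventually periodic the image is $0$ and the isotropy is trivial. The main obstacle is precisely this dichotomy---translating the fixed-point condition $pz=qz$ on infinite paths into the statement that $r$ is a finite path followed by the infinite repetition of a closed path---together with the order-theoretic book-keeping needed to make $\delta$ a well-defined homomorphism on germs.
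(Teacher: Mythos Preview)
Your proof is correct and follows essentially the same approach as the paper: both argue case-by-case on the type of character, both define the length-defect homomorphism $[pq^\ast,\theta_r]\mapsto |p|-|q|$ into $\mathbb Z$ (checking well-definedness via the order structure and additivity via the prefix case analysis), and both reduce the nontriviality question to the combinatorial fact that $pz=qz$ with $|p|\neq |q|$ forces $r$ to be eventually periodic. The only cosmetic difference is the order of presentation: the paper first extracts the form $r=qccc\cdots$ from a nontrivial isotropy element and then builds the homomorphism, whereas you build the homomorphism first and analyze its image afterward.
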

\begin{proof}
Let $G$ be the isotropy group of $\theta$.  First note that $[s,\theta_0]=[0,\theta_0]$ for all $s\in P_E$ and hence $G$ is trivial if $\theta=\theta_0$.  Next assume that $\theta =\theta_r$ with $r$ a finite path.  Then $[pq^\ast,\theta]$ makes sense if and only if $q$ is a prefix of $r$.  Moreover, if $r=qs$, then $\beta_{pq^\ast}(\theta_r) = \theta_{ps}$, and so we must have $p=q$ if $[pq^\ast,\theta]\in G$.  But then $[pp^\ast,\theta]$ is the identity at $\theta$, and so $G$ is trivial.

Next assume that $\theta=\theta_r$ where $r$ is an infinite path.  Again $[pq^\ast,\theta]$ makes sense if and only if $q$ is a prefix of $r$, and if $r=qs$, then  $\beta_{pq^\ast}(\theta_r) = \theta_{ps}$.  Thus $[pq^\ast,\theta]\in G$ if and only if $ps=qs$.  If $|p|=|q|$, this implies $p=q$ and so $[pp^\ast,\theta]$ is the identity of $G$.  So if $G$ is nontrivial, then taking inverses if necessary, we may assume that $|p|>|q|$ and so $p=qc$ for some finite nonempty path $c$, which is necessarily closed.  We then have that $qcs=ps=qs$, and so $cs=s$.  It follows that $s=ccc\cdots$, and so $r=qs=qccc\cdots$.
  It remains to show that $G$ is infinite cyclic in this case.

  Define $\rho\colon G\to \mathbb Z$ by $\rho([ab^\ast,\theta_r])=|a|-|b|$.  This is well defined because if $cd^\ast\leq ab^\ast$, then there is a finite path $x$ with $c=ax$ and $d=bx$, and so $|c|-|d|=|a|+|x|-(|b|+|x|) = |a|-|b|$.  It is a homomorphism because if $[ab^\ast,\theta_r],[cd^\ast,\theta_r]\in G$, then $[ab^\ast cd^\ast,\theta_r]\in G$.  Hence $ab^\ast cd^\ast\neq 0$, and so $b$ is a prefix of $c$ or $c$ is a prefix of $b$.  If $c=bx$, then $ab^\ast cd^\ast = axd^\ast$, and we have $|ax|-|d| = |a|-|b|+|c|-|d|$.  If $b=cy$, then $ab^\ast cd^\ast = ay^\ast d^\ast$, and $|a|-|dy| = |a|- (|d|+|b|-|c|) = |a|-|b|+|c|-|d|$.  Therefore, $\rho$ is a homomorphism.  Also, $\rho$ is injective because if $\rho[ab^\ast,\theta_r]=0$, then $|a|=|b|$.  But $a,b$ are prefixes of $r$ (as we saw above) and so $a=b$.  Therefore, $[ab^\ast,\theta_r]=[aa^\ast,\theta_r]$ is the identity of $G$.  Since every nontrivial subgroup of $\mathbb Z$ is infinite cyclic, it remains to display a nontrivial element of $G$. We claim that $[qcq^\ast,\theta_r]\in G$ is nontrivial.  Note that $\beta_{qcq^\ast}(\theta_r)=\theta_r$ and so $[qcq^\ast,\theta_r]\in G$.  Also, $\rho([qcq^\ast,\theta_r]) = |qc|-|q|=|c|>0$.  This completes the proof.
\end{proof}

  The following result is due to Va\v{s}~\cite{Vasdirect}; in fact she just proves Dedekind finiteness but the class of Leavitt path algebras she considers is easily verified to be closed under taking matrix algebras.  Stable finiteness is explicitly characterized in the special case where the graph is finite and $K$ is a field in~\cite[Theorem~4.2]{AbramsNamPhuc}. In what follows we omit writing cosets and one should interpret paths as living in the appropriate Leavitt path algebra or relativized Cohn algebra.

\begin{Thm}\label{t:vas}
Let $K$ be a commutative ring with unit and $E$ a graph.  Then the following are equivalent.
\begin{enumerate}
\item $L_K(E)$ is stably finite.
\item $L_K(E)$ is Dedekind finite.
\item $E$ is a no-exit graph.
\end{enumerate}
\end{Thm}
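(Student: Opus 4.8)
The plan is to prove the cycle of implications $(1)\Rightarrow(2)\Rightarrow(3)\Rightarrow(1)$. The first implication is immediate, since Dedekind finiteness is exactly the $n=1$ instance of stable finiteness.

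For $(2)\Rightarrow(3)$ I would argue by contraposition. Suppose $E$ is not no-exit, so some cycle $c=e_1\cdots e_k$ based at a vertex $v$ has an exit: some cycle vertex emits an edge $f$ different from the next cycle edge, giving a path $p=e_1\cdots e_i f$ out of $v$ that diverges from $c$. Working in $L_K(E)=C_K^X(E)$ with $X$ the set of all regular vertices, the relations of $P_E$ give $c^\ast c=v$, so it suffices to show $cc^\ast\neq v$; then $c$ and $c^\ast$ generate a copy of the bicyclic monoid inside the corner $vL_K(E)v$ (equivalently, $c^\ast$ is right but not left invertible there), so $v$ is not a finite idempotent and $L_K(E)$ is not Dedekind finite. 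To see $cc^\ast\neq v$ I pass through the isomorphism $C_K^X(E)\cong K\mathscr G_{E,X}$ of Proposition~\ref{p:iso:cohn}, under which $v\mapsto 1_{D(v)\cap Y}$ and $cc^\ast\mapsto 1_{D(cc^\ast)\cap Y}$. Extending the divergent path $p$ to a point of $Y$ by Lemma~\ref{l:find.path} produces some $\theta\in(D(v)\setminus D(cc^\ast))\cap Y$, so these two indicator functions are distinct.

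The substance is $(3)\Rightarrow(1)$. The naive route---feeding \emph{all} orbits of $K\mathscr G_{E,X}$ into Theorem~\ref{t:finite.orbits.improved}---fails for infinite graphs, because an orbit of $Y$ need not be discrete (branching that diverges and then remerges produces points of one orbit accumulating at another of its points). I would therefore first reduce to finite graphs: using the standard presentation of a Leavitt path algebra as a direct limit of relativized Cohn path algebras of finite subgraphs~\cite{LeavittBook}, write $L_K(E)\cong\varinjlim_F C_K^{X_F}(F)$, where $F$ ranges over finite subgraphs and $X_F$ consists of the regular vertices of $F$ all of whose $E$-edges already lie in $F$. Each such $F$ is again no-exit (a cycle of $F$ is a cycle of $E$, and its cycle edges keep its out-degree equal to one), so by Proposition~\ref{p:directlimit} it is enough to prove that $C_K^{X_F}(F)\cong K\mathscr G_{F,X_F}$ is stably finite for every \emph{finite} no-exit graph. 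For such $F$, Lemma~\ref{l:isotropy} shows every isotropy group of $\mathscr G_{F,X_F}$ is trivial or infinite cyclic, so its $K$-algebra is $K$ or $K[t,t^{-1}]$, hence commutative and stably finite; it then remains to check that every orbit of the unit space is closed and discrete, after which the union of all orbits is $Y$, which is dense and---$\mathscr G_{F,X_F}$ being Hausdorff---$K$-dense, and Theorem~\ref{t:finite.orbits.improved} applies.

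I expect the main obstacle to be exactly this last topological point: showing that in a finite no-exit graph each orbit of $Y$ is simultaneously closed and discrete. This is where the hypothesis is used, and the mechanism is the reverse of the one driving $(2)\Rightarrow(3)$: finiteness forces every infinite path to be eventually periodic, and no-exit forces each cycle vertex to have out-degree one, so no path can agree with an eventually periodic path along arbitrarily long prefixes and then diverge---this is precisely the configuration (realized by the paths $c^{n}f$ when a cycle has an exit) that would let an orbit accumulate at a periodic point or fail to be discrete. The remaining work---verifying that the relativized Cohn direct limit is set up correctly so that the no-exit property descends to the finite approximations, and translating between the germ picture of $\mathscr G_{E,X}$ and Proposition~\ref{p:iso:cohn}---is routine bookkeeping.
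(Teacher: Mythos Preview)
Your outline matches the paper's: the cycle $(1)\Rightarrow(2)\Rightarrow(3)\Rightarrow(1)$, contraposition for $(2)\Rightarrow(3)$, and for $(3)\Rightarrow(1)$ the reduction via the direct limit $L_K(E)\cong\varinjlim_F C_K^{X_F}(F)$ over finite complete subgraphs, followed by an appeal to Theorem~\ref{t:finite.orbits.improved} using Lemma~\ref{l:isotropy} for the isotropy groups. Two points of comparison are worth recording.

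For $(2)\Rightarrow(3)$ the paper stays inside $L_K(E)$ rather than passing through Proposition~\ref{p:iso:cohn}: rebasing the cycle so that the exit edge $e$ leaves the base vertex $v$, and letting $p$ be the cycle, one has $p^\ast e=0$ already in $P_E$ (the first edge of $p$ is not $e$), whence $pp^\ast e=0$ while $ve=e\neq 0$ in $L_K(E)$ by the remark after Proposition~\ref{p:iso:cohn}. This is shorter than your detour through indicator functions on $Y$, though yours is correct.

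More significantly, for $(3)\Rightarrow(1)$ the paper dissolves the ``main obstacle'' you anticipate rather than confronting it. Since $E$ is no-exit, every vertex on a cycle of a finite complete subgraph $F$ has out-degree one in $E$, hence is regular in $E$ and lies in $X_F$. Consequently any finite path whose principal character survives in $Y$ must end off every cycle and so has length below $n=|F\skel 0|$, and any infinite path enters (and never leaves) a cycle within its first $n$ edges; thus $\mathscr G_{F,X_F}\skel 0$ is \emph{finite}. Closedness and discreteness of every orbit are then trivial, and no argument about accumulation of eventually periodic points is needed. Your sketch of that argument is on the right track, but it is unnecessary once you observe that the cycle vertices all land in $X_F$.
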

\begin{proof}
Obviously (1) implies (2). Assume that (2) holds and that there is a cycle with an exiting edge $e$ with initial vertex $v$.  Let $p$ be the path that goes once around the cycle starting from $v$.  Then $p,p^\ast\in vL_K(E)v$ and $p^\ast p=v$. But $pp^\ast e=0$, whereas $ve=e\neq 0$.  Therefore, $pp^\ast\neq v$ and so  $L_K(E)$ is not Dedekind finite. Thus (2) implies (3).

For (3) implies (1), we use~\cite[Theorem~1.6.10]{LeavittBook}.  A subgraph $F$ of $E$ is called \emph{complete} if whenever a vertex $v$ of $F$ emits at least one edge in $F$ and finitely many in $E$, we have that all edges of $E$ with initial vertex $v$ belong to $F$.  The set of regular vertices of a graph $F$ will be denoted $\mathrm{Reg}(F)$.  Then Theorem~\cite[Theorem~1.6.10]{LeavittBook} proves that $L_K(E)\cong \varinjlim_F C_K^{\mathrm{Reg}(F)\cap \mathrm{Reg}(E)}(F)$ where $F$ runs over the finite complete subgraphs of $E$.  In fact, in~\cite{LeavittBook} they always work over fields, but the proof of~\cite[Theorem~1.6.10]{LeavittBook} works over any base commutative ring with unit.  Indeed, it is fairly easy to see that $KP_E$ is the directed union of the $KP_F$ where $F$ runs over the finite complete subgraphs and that the generating set of $I_{\mathrm{Reg}(E)}$  is the union of the generating sets of the $I_{\mathrm{Reg}(F)\cap \mathrm{Reg}(E)}$ where $F$ runs over the finite complete subgraphs of $E$, yielding the direct limit result.
However~\cite[Theorem~1.6.10]{LeavittBook} proves the stronger result that the directed system consists of injective maps and so $L_K(E)$ is really a directed union of these relative Cohn algebras.

Next we observe that if $F$ is a finite complete subgraph of $E$, then $F$ is no-exit.  Indeed, every cycle in $F$ is a cycle in $E$ and hence has no exit in $E$, let alone $F$.  Also note that since each vertex of a cycle in $E$ has out-degree $1$, and hence is regular, it follows that each vertex of a cycle in $F$ belongs to $\mathrm{Reg}(F)\cap \mathrm{Reg}(E)$.  Since stably finite rings are closed under direct limits by Proposition~\ref{p:directlimit}, we are reduced to the case of proving stable finiteness for $C_K^X(F)$ where $F$ is a finite no-exit graph and every vertex of every cycle of $F$ belongs to $X$ (this reduction also appears in~\cite{Vasdirect}).

We claim that $\mathscr G_{F,X}\skel 0$ is finite in this case.  Assuming the claim, it will follow that each orbit is closed and discrete.  Moreover, each isotropy group $G$ is either trivial or infinite cyclic by Lemma~\ref{l:isotropy}, whence $KG$ is a commutative ring with unit and thus stably finite.  Theorem~\ref{t:finite.orbits.improved} then yields that $K\mathscr G_{F,X}\cong C_K^X(F)$ is stably finite.

Let $n$ be the number of vertices of $F$.  Any path of length $n$ must visit some vertex twice and hence enter a cycle from which it cannot exit.  Since every vertex of a cycle belongs to $X$, we deduce that if $p$ is a finite path with $\theta_p\in \mathscr G_{F,X}\skel 0$, then $p$ has length less than $n$.  Since $F$ is a finite graph, there are only finitely many paths of length less than $n$.  On the other hand, since any infinite path must enter a cycle (from which it cannot escape) within the first $n$ edges, the infinite paths are of the form $qppp\cdots$ with $|q|<n$  and $p$ the label of a cycle.  Clearly, there are only finitely  many such infinite paths since $F$ is finite and any cycle has length at most $n$ (since any prefix of length $n$ must repeat a vertex).  Thus $\mathscr G_{F,X}\skel 0$ is finite.  This completes the proof.
\end{proof}

\section{Normed $\ast$-algebras and spanning semigroups}
In this section, we are interested in stable finiteness of ample groupoid algebras over fields of characteristic $0$, and hence we may restrict our attention to the field of complex numbers by Corollary~\ref{c:field.is.c}.
 Our goal is to use traces to prove that certain $\ast$-algebras are stably finite.  Although, we shall not obtain any new stable finiteness results using traces that cannot be deduced from the results of Section~\ref{s:stable.finiteness}, the approach we use here has the advantage that  it does not rely explicitly on Kaplansky's stable finiteness result for complex group algebras (although Passman's proof idea~\cite{Passmanidem} is embedded in our approach).  Also there has been quite a bit of research about faithful traces on inverse semigroups algerbas~\cite{CrabbMunn,EasdownMunn,starlingmean} and our approach recovers a number of these results.  We are hopeful that the theory of traces on complex groupoid algebras that we initiate here will find other uses outside of the context of stable finiteness.

A complex algebra $A$ is a $\ast$-algebra if it has an involution $\ast$ such that:
\begin{itemize}
\item [(SA1)]$(a+b)^\ast =a^\ast +b^\ast$ for all $a,b\in A$;
\item [(SA2)] $(ab)^\ast=b^\ast a^\ast$ for all $a,b\in A$;
\item [(SA3)] $(a^\ast)^\ast=a$ for all $a\in A$;
\item [(SA4)] $(ca)^\ast=\ov ca^\ast$ for all $c\in \mathbb C$ and $a\in A$.
\end{itemize}

 We say that the $\ast$-algebra $A$ is a \emph{seminormed}  algebra if it is equipped with a seminorm $|\cdot |\colon A\to \mathbb R$ such that:
\begin{itemize}
  \item [(N1)] $|a+b|\leq |a|+|b|$ for all $a,b\in A$;
  \item [(N2)]  $|a|\geq 0$ for all $a\in A$;
  \item [(N3)]  $|ab|\leq |a|\cdot |b|$ for all $a,b\in A$;
  \item [(N4)] $|ca|=|c||a|$ for $c\in \mathbb C$, $a\in A$;
  \item [(N5)] $|a|=|a^\ast|$ for all $a\in A$.
\end{itemize}
We do not require that $A$ be complete with respect to the seminorm.  The $\ast$-algebra $A$ is \emph{normed} if additionally $|a|=0$ if and only if $a=0$.

If $A$ is any normed $\ast$-algebra and $I$ is a $\ast$-ideal (closed under $\ast$), then $A/I$ is a seminormed $\ast$-algebra with seminorm given by
\[|a+I| = \inf\{|b| \mid b\in a+I\}.\]  This is a norm precisely when $I$ is closed.

Recall that a $\ast$-semigroup is a semigroup $S$ with an involution $s\mapsto s^\ast$ (satisfying (SA2) and (SA3)).  For example, inverse semigroups are $\ast$-semigroups with respect to the usual involution.
A \emph{projection} in a $\ast$-semigroup is an idempotent $p$ with $p^\ast=p$. Of course, the multiplicative semigroup of a $\ast$-algebra is a $\ast$-semigroup. On the other hand, if $S$ is a $\ast$-semigroup, then $\mathbb CS$ is a normed $\ast$-algebra where if $a=\sum_{s\in S}a_ss$, then $a^\ast = \sum_{s\in S}\ov {a_s}s^\ast$ and the norm is the $\ell_1$-norm $|a|=\sum_{s\in S}|a_s|$.

If $A$ is a $\ast$-algebra, then a \emph{spanning semigroup} for $A$ is a $\ast$-subsemigroup $S\subseteq A$ that spans $A$ as a vector space.  For example, $S$ is a spanning semigroup of $\mathbb CS$.  If $S$ is a spanning semigroup for $A$, then $\pi\colon \mathbb CS\to A$ induced by the inclusion of $S$ is a surjective homomorphism of $\ast$-algebras and so $A\cong \mathbb CS/I$ where $I=\ker \pi$ is a $\ast$-ideal.  Therefore, the $\ell_1$-norm on $\mathbb CS$ induces a seminorm on $A$ turning it into a seminormed $\ast$-algebra.  Concretely,
\[|a| = \inf\left\{\sum_{s\in S}|a_s|\mathrel{\big\vert} a=\sum_{s\in S}a_ss\right\}.\]  We call this the \emph{seminorm induced} by $S$ on $A$.

  If $\mathscr G$ is an ample groupoid, there is an involution $\ast$ on $\mathbb C\mathscr G$ given by \[f^\ast(\gamma) = \ov {f(\gamma\inv)}\] for $f\in \mathbb C\mathscr G$ and $\gamma\in \mathscr G$.  Note that $1_U^*= 1_{U\inv}$.  Thus the indicator functions form a spanning semigroup  $\{1_U\mid U\in \Gamma_c(\mathscr G)\}\cong \Gamma_c(\mathscr G)$, which we tacitly identify with $\Gamma_c(\mathscr G)$ to avoid introducing new notation.  Hence we have an induced seminorm on $\mathbb C\mathscr G$.  Every function in $\mathbb C\mathscr G$ takes on only finitely many values and is, hence, bounded.  Put $|f|_{\infty} = \sup_{\gamma\in \mathscr G}|f(\gamma)|$.  Then $|\cdot |_{\infty}$ is a vector space norm on $\mathbb C\mathscr G$.

\begin{Prop}\label{p:is.normed}
Let $\mathscr G$ be an ample groupoid and let $|\cdot |$ be the seminorm on $\mathbb C\mathscr G$ induced by $\Gamma_c(\mathscr G)$.  Then $|f|_{\infty}\leq |f|$ and hence $|\cdot |$ is a norm.
\end{Prop}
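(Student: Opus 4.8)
The plan is to obtain the inequality $|f|_\infty \le |f|$ by a pointwise estimate, and then deduce that $|\cdot|$ is a norm from the fact (noted just before the statement) that $|\cdot|_\infty$ is already a vector space norm.

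First I would unwind the definition of the induced seminorm: by construction $|f|$ is the infimum of $\sum_{U} |a_U|$ taken over all ways of writing $f = \sum_{U\in\Gamma_c(\mathscr G)} a_U 1_U$ as a (finite) $\mathbb C$-linear combination of indicator functions of compact open bisections. Fix one such representation. The crucial point is that every element of $\mathbb C\mathscr G$ is a genuine $\mathbb C$-valued function on $\mathscr G$, so addition is pointwise and evaluation at a point $\gamma\in\mathscr G$ is linear: $f(\gamma) = \sum_U a_U 1_U(\gamma)$. Since each $1_U(\gamma)\in\{0,1\}$, the triangle inequality gives $|f(\gamma)| \le \sum_U |a_U|\,|1_U(\gamma)| \le \sum_U |a_U|$. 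The right-hand side is independent of $\gamma$, so taking the supremum over $\gamma\in\mathscr G$ yields $|f|_\infty \le \sum_U |a_U|$. As this holds for every representation of $f$, taking the infimum over all representations gives $|f|_\infty \le |f|$.

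For the second assertion, it suffices to verify the defining property of a norm, namely that $|f|=0$ forces $f=0$ (the remaining seminorm axioms (N1)--(N5) hold automatically for any induced seminorm). If $|f|=0$, then by the inequality just proved $|f|_\infty \le |f| = 0$, so $|f|_\infty = 0$; since $|\cdot|_\infty$ is a vector space norm on $\mathbb C\mathscr G$, this forces $f=0$. Hence $|\cdot|$ is a norm.

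I do not anticipate a genuine obstacle here: the argument is elementary once the right observation is isolated. The only step requiring a moment's care is the interchange of evaluation with the finite sum, i.e. the fact that elements of $\mathbb C\mathscr G$ are honest functions whose values are computed pointwise. This is precisely what keeps the estimate valid even in the non-Hausdorff case, where the internal algebraic description of $\mathbb C\mathscr G$ (as a quotient of $K\Gamma_c(\mathscr G)$) is more delicate; here we only use that $\mathbb C\mathscr G$ sits inside the $\mathbb C$-valued functions on $\mathscr G$, so no Hausdorff hypothesis is needed.
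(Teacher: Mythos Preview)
Your proof is correct and is essentially identical to the paper's: both establish the pointwise bound $|f(\gamma)|\le \sum_U |a_U|$ using $1_U(\gamma)\in\{0,1\}$, take the supremum over $\gamma$ and the infimum over representations, and then conclude that $|f|=0$ forces $|f|_\infty=0$ and hence $f=0$. Your version is slightly more explicit about the infimum step, but there is no substantive difference.
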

\begin{proof}
If $f=\sum_{U\in \Gamma_c(\mathscr G)} c_U1_U$, then \[|f(\gamma)| = \left\vert\sum_{U\in \Gamma_c(\mathscr G)} c_U1_U(\gamma)\right\vert\leq \sum_{U\in \Gamma_c(\mathscr G)} |c_U|\] since $0\leq 1_U(\gamma)\leq 1$.  Therefore, $|f(\gamma)|\leq |f|$ and so $|f|_{\infty}\leq |f|$.  In particular, if $|f|=0$, then $|f|_{\infty}=0$, and so $f=0$.
\end{proof}

A set $E$ of idempotents in a ring $R$ is a set of \emph{local units} if each finite subset $F$ of $R$ is contained in $eRe$ for some $e\in E$.  If $R$ has a set of local units, we say that the ring $R$ has \emph{local units}.  This is equivalent to being a direct limit of unital rings with respect to ring homomorphisms that do not have to respect the identities.

If $A$ is a $\ast$-algebra and $S$ is a spanning semigroup, we say that $S$ contains a set of \emph{local projections} for $A$ if, for every finite subset $F$ of $A$, we have that $F\subseteq pAp$ for some projection $p\in S$.  For example, if $\mathscr G$ is an ample groupoid, then $\Gamma_c(\mathscr G)$ contains a set of local projections for $\mathbb C\mathscr G$. Indeed, it is well known~\cite{groupoidbundles} that for each finite subset $F\subseteq \mathbb C\mathscr G$ there is a compact open subset $U\subseteq \mathscr G\skel 0$ with $F\subseteq 1_U\mathbb C\mathscr G1_U$; moreover, the indicator function $1_U$ is a projection.  Note that if $S$ is an inverse semigroup, then it is not necessarily the case that $S$ contains a set of local projections for $\mathbb CS$, but there is always an inverse $\ast$-subsemigroup of $\mathbb CS$ containing $S$ that does.

If $A$ is unital, then $S$ contains a set of local projections for $A$ if and only if $S$ contains the identity of $A$.  Indeed, if $S$ contains the identity, it trivially contains a set of local projections.  Conversely, if $S$ contains a set of local projections, then $1\in pAp$ for some projection $p\in S$, and so $1=p1p=p\in S$.

It will be convenient to build a spanning semigroup for $M_n(A)$ from a spanning semigroup for $A$.  First note that if $A$ is a $\ast$-algebra, then $M_n(A)$ is also a $\ast$-algebra in the standard way where $(b_{ij})^\ast = (b_{ji}^\ast)$.  Let $I_n$ be the symmetric inverse monoid on the set $[n]=\{1,\ldots,n\}$.  It consists of all partial bijections $\sigma\colon [n]\to [n]$ and the involution is given by taking the inverse partial bijection.  The multiplication is composition of partial functions.   We write $\dom(\sigma)$ for the domain of $\sigma\in I_n$ and $\ran(\sigma)$ for the range.

Denote by $E_{ij}$ the standard $ij$-matrix unit.
If $S$ is a spanning semigroup for $A$, the \emph{rook semigroup} $R_n(S)$ consists of all elements of $M_n(A)$ of the form $\sum_{j\in \dom(\sigma)} s_jE_{\sigma(j)j}$ for some $\sigma\in I_n$ and $s_j\in S$.  The term ``rook matrix'' is due to Solomon~\cite{Solomonrook} and refers to the fact that each row and column of a rook matrix has at most one nonzero entry and hence the matrix can be viewed as a placement of non-attacking rooks on an $n\times n$ chessboard.

\begin{Prop}\label{p:rook.matrix}
Let $S$ be a spanning semigroup for $A$ (containing a set of local projections).  Then $R_n(S)$ is a spanning semigroup for $M_n(A)$ (containing a set of local projections).
\end{Prop}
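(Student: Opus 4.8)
The plan is to verify the three defining requirements of a spanning semigroup in turn: that $R_n(S)$ is closed under multiplication, that it is closed under the involution $\ast$, and that it spans $M_n(A)$; and then, separately, to produce local projections inside $R_n(S)$ under the parenthetical hypothesis. The guiding principle throughout is that the combinatorics of a rook matrix are governed entirely by the partial bijection $\sigma\in I_n$ recording the positions of its nonzero entries, so that each algebraic operation on $R_n(S)$ mirrors the corresponding operation in the symmetric inverse monoid $I_n$.

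For closure under multiplication I would take two rook matrices $X=\sum_{j\in \dom(\sigma)}s_jE_{\sigma(j)j}$ and $Y=\sum_{k\in \dom(\tau)}t_kE_{\tau(k)k}$ and compute $XY$ using $E_{\sigma(j)j}E_{\tau(k)k}=\delta_{j,\tau(k)}E_{\sigma(j)k}$. The surviving terms are exactly those with $k\in \dom(\sigma\tau)$, giving $XY=\sum_{k\in\dom(\sigma\tau)}(s_{\tau(k)}t_k)E_{(\sigma\tau)(k)k}$, a rook matrix for the composite $\sigma\tau\in I_n$ with coefficients $s_{\tau(k)}t_k\in S$. For closure under $\ast$, the identity $(b_{ij})^\ast=(b_{ji}^\ast)$ gives $E_{\sigma(j)j}^\ast=E_{j\sigma(j)}$, so $X^\ast=\sum_j s_j^\ast E_{j\sigma(j)}$; reindexing by $i=\sigma(j)$ exhibits $X^\ast$ as the rook matrix for $\sigma\inv=\sigma^\ast\in I_n$ with coefficients $s_{\sigma\inv(i)}^\ast\in S$, using that $S$ is closed under $\ast$. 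Spanning is then immediate: each $aE_{ij}$ with $a\in A$ lies in the span of $R_n(S)$ since $S$ spans $A$ and each $sE_{ij}$ with $s\in S$ is a rook matrix (for the one-point partial bijection $j\mapsto i$), and the elementary matrices $aE_{ij}$ span $M_n(A)$.

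For the parenthetical statement, suppose $S$ contains a set of local projections for $A$ and let $F$ be a finite subset of $M_n(A)$. Collecting all entries of all members of $F$ yields a finite subset of $A$, so there is a projection $p\in S$ with every such entry lying in $pAp$. I would then set $P=\sum_{j=1}^n pE_{jj}$, the scalar matrix with $p$ on the diagonal, which is a rook matrix for the identity of $I_n$. A direct check gives $P^\ast=P$ (as $p^\ast=p$) and $P^2=P$ (as $p^2=p$), so $P$ is a projection in $R_n(S)$, and for any $B=(b_{ij})\in F$ the equalities $b_{ij}=pb_{ij}p$ yield $PBP=B$, whence $F\subseteq PM_n(A)P$.

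None of the steps presents a genuine difficulty; the only point requiring care is the index bookkeeping in the product computation, where one must confirm that the composite partial bijection $\sigma\tau$ has exactly the domain on which the matrix-unit products survive, so that $R_n(S)$ is \emph{literally} closed under multiplication rather than merely spanned by its products. The involution computation is similarly just a matter of transposing and conjugating indices correctly. Since everything reduces to routine verification, the essential content is the observation recorded in the first paragraph, that $R_n(S)$ is assembled from $S$ and $I_n$ in a way that makes each required property transparent.
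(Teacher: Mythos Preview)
Your proposal is correct and follows essentially the same approach as the paper: both verify closure under $\ast$ and multiplication by direct computation with matrix units (tracking the underlying partial bijections in $I_n$), observe that $sE_{ij}\in R_n(S)$ to get spanning, and build the local projection as the diagonal matrix $P=\sum_{j=1}^n pE_{jj}$ from a local projection $p\in S$.
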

\begin{proof}
If $B=\sum_{j\in \dom(\sigma)} s_jE_{\sigma(j)j}\in R_n(S)$, then $B^\ast = \sum_{j\in \dom(\sigma)} s_j^\ast E_{j\sigma(j)} = \sum_{i\in \dom(\sigma\inv)}s_{\sigma\inv(i)}^*E_{\sigma\inv (i)i}\in R_n(S)$.  Also, if $C=\sum_{j\in \dom(\tau)}t_jE_{\tau(j)j}$, then $BC = \sum_{j\in \dom(\sigma\tau)} s_{\tau(j)}t_jE_{\sigma\tau(j)j}\in R_n(S)$.  Since $sE_{ij}\in R_n(S)$ for all $s\in S$ and $1\leq i,j\leq n$, and $S$ spans $A$, we deduce that $R_n(S)$ spans $M_n(A)$.

Suppose that $S$ contains a set of local projections for $A$ and let $F$ be a finite set of matrices from $M_n(A)$.  Then there is a projection $p\in S$ such that all the finitely many entries of the elements of $F$ belong to $pAp$.  Then $P=\sum_{j=1}^n pE_{jj}\in R_n(S)$ (take $\sigma=1_{[n]}$) and $P$ is a projection with $F\subseteq PM_n(A)P$.
This completes the proof.
\end{proof}

\section{Faithful traces and stable finiteness}

A \emph{trace} on a $\ast$-algebra $A$ is a functional $\tau\colon A\to \mathbb C$ satisfying:
\begin{itemize}
  \item [(T1)] $\tau(ab)=\tau(ba)$ for all $a,b\in A$;
  \item [(T2)] $\tau(a^\ast)=\ov{\tau(a)}$ for all $a\in A$;
  \item [(T3)] $\tau(aa^\ast)\geq 0$ for all $a\in A$.
\end{itemize}
The trace $\tau$ is said to be \emph{faithful} if it also satisfies
\begin{itemize}
\item [(T4)] $\tau(aa^\ast)=0$ implies $a=0$.
\end{itemize}

In the case $\tau$ is a faithful trace on $A$, we can define a complex inner product $(\cdot,\cdot)\colon A\times A\to \mathbb C$ by $(a,b) = \tau(ab^*)$.  The corresponding norm on $A$ will be denoted  $\Vert\cdot \Vert$. Note that $\Vert a^\ast\Vert^2 = \tau(a^\ast a) =\tau(aa^\ast) = \Vert a\Vert^2$, and so $\Vert a^\ast \Vert=\Vert a\Vert$.    The next proposition shows that $\ast$ gives the adjoint map with respect to this inner product for both left and right translations.

\begin{Prop}\label{p:adjoint.op}
Let $\tau$ be a faithful trace on $A$ with corresponding inner product $(\cdot,\cdot)$.  Then for all $a,b,c\in A$, we have:
\begin{enumerate}
  \item $(ab,c) = (b,a^\ast c)$;
  \item $(a,bc) = (ac^\ast,b)$.
\end{enumerate}
\end{Prop}
\begin{proof}
This is standard: $(ab,c)=\tau(abc^\ast) = \tau(bc^\ast a) = \tau(b(a^\ast c)^\ast) = (b,a^\ast c)$ and $(a,bc) = \tau(a(bc)^\ast) = \tau(ac^\ast b^\ast) = (ac^\ast, b)$.
\end{proof}

Notice that if $\tau$ is a trace, then $\tau(p)=\tau(pp^\ast)\geq 0$ for each projection $p$.  If $\tau$ is faithful, then $\tau(p)>0$ for each non-zero projection.

A trace $\tau$ on a unital $\ast$-algebra is said to be \emph{normalized} if $\tau(1)=1$.

Suppose $S$ is a spanning semigroup for $A$.  We say that a trace $\tau$ on $A$ is \emph{$S$-contractive} if $\tau(as(as)^\ast)\leq \tau(aa^\ast)$ for all $a\in A$ and $s\in S$. This condition is left-right dual.

\begin{Prop}\label{p:S.contractive}
Let $\tau$ be a trace on a $\ast$-algebra $A$ with spanning semigroup $S$.  Then the following are equivalent.
\begin{enumerate}
  \item $\tau$ is $S$-contractive.
  \item $\tau(sa(sa)^\ast)\leq \tau(aa^\ast)$ for all $a\in A$, $s\in S$.
\end{enumerate}
\end{Prop}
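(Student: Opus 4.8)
The plan is to prove the two conditions equivalent by showing that each is, via the trace property (T1) and the involution, a reformulation of one and the same symmetric inequality; this is exactly the left-right duality flagged just before the statement. The only tools needed are the cyclicity $\tau(xy)=\tau(yx)$, the consequence $\tau(aa^\ast)=\tau(a^\ast a)$, and the fact that $a\mapsto a^\ast$ is a bijection of $A$ while $s\mapsto s^\ast$ is a bijection of $S$ (since $S$ is a $\ast$-subsemigroup).

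First I would rewrite the two expressions as $as(as)^\ast = ass^\ast a^\ast$ and $sa(sa)^\ast = s(aa^\ast)s^\ast$. Then I would apply (T1) twice to $\tau(ass^\ast a^\ast)$, cycling first $a$ and then $s$ to the back, to obtain
\[
\tau(as(as)^\ast) = \tau(ss^\ast a^\ast a) = \tau(s^\ast a^\ast a s) = \tau\bigl(s^\ast(a^\ast a)s\bigr).
\]
Combined with $\tau(aa^\ast) = \tau(a^\ast a)$, this shows that condition (1) holds for all $a\in A$ and $s\in S$ if and only if $\tau\bigl(s^\ast(a^\ast a)s\bigr)\le \tau(a^\ast a)$ for all $a\in A$ and $s\in S$.

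Next I would perform the substitution $a\mapsto a^\ast$ and $s\mapsto s^\ast$. Because $\ast$ is an involution on both $A$ and $S$, these substitutions merely reindex the universally quantified variables and leave the quantifier range unchanged, so the displayed inequality is equivalent to $\tau\bigl(s(aa^\ast)s^\ast\bigr)\le\tau(aa^\ast)$ for all $a\in A$ and $s\in S$. Since $s(aa^\ast)s^\ast = sa(sa)^\ast$, this last inequality is precisely condition (2). Each step being a genuine equivalence, the chain reads (1) $\Leftrightarrow$ (2).

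I do not anticipate a real obstacle: the content is entirely the bookkeeping of involutions and cyclic permutations under $\tau$, and the only point requiring thought is selecting the substitution $a\mapsto a^\ast$, $s\mapsto s^\ast$ that converts the right-sided statement into the left-sided one. It is worth remarking that the spanning property of $S$ is never invoked; all that the argument uses is closure of $S$ under $\ast$, so the same proof applies verbatim to any $\ast$-subsemigroup of $A$.
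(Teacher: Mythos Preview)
Your proof is correct and follows essentially the same approach as the paper: both use cyclicity of $\tau$ to rewrite $\tau(as(as)^\ast)$ and $\tau(sa(sa)^\ast)$ in terms of one another via the substitution $a\mapsto a^\ast$, $s\mapsto s^\ast$, relying on the closure of $S$ under $\ast$. The paper's version carries out (1) $\Rightarrow$ (2) directly by writing $\tau(sa(sa)^\ast)=\tau((a^\ast s^\ast)(a^\ast s^\ast)^\ast)$ and then applying (1), whereas you pass through the intermediate symmetric form $\tau(s^\ast(a^\ast a)s)\le\tau(a^\ast a)$; the content is identical.
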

\begin{proof}
If $\tau$ is $S$-contractive and $a\in A$, $s\in S$, then since $s^\ast \in S$, we have that $\tau(sa(sa)^\ast) = \tau(saa^\ast s^\ast) = \tau(a^\ast s^\ast sa) = \tau((a^\ast s^\ast)(a^\ast s^\ast)^\ast)\leq \tau(a^\ast a)=\tau(aa^\ast)$ because $\tau$ is $S$-contractive.  The converse  implication is similar.
\end{proof}

In the case that $\tau$ is a faithful trace, the condition that $\tau((as)(as)^\ast)\leq \tau(aa^\ast)$ can be rewritten as $\Vert as\Vert\leq \Vert a\Vert$ and the second condition in Proposition~\ref{p:S.contractive} can be rewritten as $\Vert sa\Vert\leq \Vert a\Vert$.  In other words, left and right translations by elements of $S$ are contractive with respect to $\Vert\cdot \Vert$, whence the name.  The following proposition generalizes an observation of Passman~\cite{Passmanidem} for the special case of the standard trace on a group algebra.

\begin{Prop}\label{p:Passman.inequality}
Let $A$ be a $\ast$-algebra with spanning semigroup $S$.  Suppose that $\tau$ is an $S$-contractive faithful trace on $A$ with associated norm $\Vert\cdot\Vert$. Denote by $|\cdot |$ the seminorm induced on $A$ by $S$.
\begin{enumerate}
\item $\Vert ab\Vert\leq \Vert a\Vert \cdot |b|$ for all $a,b\in A$.
\item $\Vert ab\Vert\leq |a|\cdot \Vert b\Vert$ for all $a,b\in A$.
\end{enumerate}
In particular, if $|a|=0$, then $aA=0=Aa$.  Consequently, if $A$ has local units, then $|\cdot |$ is a norm.
\end{Prop}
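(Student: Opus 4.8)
The plan is to derive both inequalities from a single mechanism: expand one of the two factors in the spanning semigroup $S$, apply the triangle inequality for the inner-product norm $\Vert\cdot\Vert$, bound each resulting term by contractivity, and then pass from a fixed expansion to the induced seminorm $|\cdot|$ by taking an infimum. This last passage is Passman's idea and is the conceptual heart of the argument; everything else is formal manipulation.

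For part (1), I would fix an arbitrary (finite) representation $b=\sum_{s\in S}b_s s$ and write $ab=\sum_{s\in S}b_s\,(as)$. Since $\Vert\cdot\Vert$ arises from an inner product it is subadditive and homogeneous, so $\Vert ab\Vert\le\sum_{s\in S}|b_s|\,\Vert as\Vert$. Now $S$-contractivity gives $\Vert as\Vert^2=\tau(as(as)^\ast)\le\tau(aa^\ast)=\Vert a\Vert^2$, hence $\Vert as\Vert\le\Vert a\Vert$, and therefore $\Vert ab\Vert\le\Vert a\Vert\sum_{s\in S}|b_s|$. Because the left-hand side does not depend on the chosen expansion of $b$, I take the infimum over all representations of $b$ and invoke the definition of the induced seminorm to obtain $\Vert ab\Vert\le\Vert a\Vert\cdot|b|$. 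Part (2) is entirely dual: I expand $a=\sum_{s\in S}a_s s$, write $ab=\sum_{s\in S}a_s\,(sb)$, use the second (left) form of contractivity $\Vert sb\Vert\le\Vert b\Vert$ furnished by Proposition~\ref{p:S.contractive}, and take the infimum over representations of $a$.

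For the ``in particular'' clause, suppose $|a|=0$. Part (2) then gives $\Vert ab\Vert\le|a|\cdot\Vert b\Vert=0$ for every $b\in A$, so $\Vert ab\Vert=0$; since $\tau$ is faithful, $\Vert\cdot\Vert$ is a genuine norm and this forces $ab=0$, i.e.\ $aA=0$. Symmetrically, part (1) yields $\Vert ba\Vert\le\Vert b\Vert\cdot|a|=0$ for all $b$, whence $Aa=0$.

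Finally, to conclude that $|\cdot|$ is a norm when $A$ has local units, I need only injectivity, so suppose $|a|=0$. By the previous step $aA=0=Aa$. Choosing an idempotent $e$ in the set of local units with $a\in eAe$, one has $a=eae$ and hence $ae=a$; but $ae\in aA=0$, so $a=0$. I expect this to be the only step requiring genuine care, since it is where the (otherwise unused) local-units hypothesis enters and one must remember that membership in $eAe$ forces $ae=a$; all the remaining steps are routine uses of the triangle inequality, homogeneity, and the definition of $|\cdot|$.
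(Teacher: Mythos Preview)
Your proof is correct and follows essentially the same approach as the paper: expand one factor over $S$, apply the triangle inequality and $S$-contractivity, then take the infimum; the paper's version differs only cosmetically in that it divides through by $\Vert a\Vert$ (treating $a=0$ separately) before taking the infimum, whereas you take the infimum directly. Your local-units argument is the same as the paper's, just with the implication ``$a\neq 0\Rightarrow aA\neq 0$'' spelled out explicitly.
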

\begin{proof}
We handle the first item only as the second follows dually using Proposition~\ref{p:S.contractive}(2).
Since $\tau$ is $S$-contractive, we have that $\Vert as\Vert\leq \Vert a\Vert$ for all $a\in A$ and $s\in S$.  If $a=0$, there is nothing to prove, so assume that $a\neq 0$.  If we write $b=\sum_{s\in S}b_ss$, then the triangle inequality yields
\[\Vert ab\Vert =\Vert \sum_{s\in S}b_sas\Vert \leq \sum_{s\in S}|b_s|\Vert as\Vert\leq \sum_{s\in S}|b_s|\Vert a\Vert = \Vert a\Vert \sum_{s\in S}|b_s|.\]
We conclude that $\frac{\Vert ab\Vert}{\Vert a\Vert}\leq \sum_{s\in S}|b_s|$, and so taking infima over all representations of $b$ as a linear combination of elements of $S$ we obtain $\Vert ab\Vert\leq \Vert a\Vert\cdot  |b|$.

It follows immediately from (1) and (2) that if $|a|=0$, then $\Vert ab\Vert=0=\Vert ba\Vert $ for all $b\in B$ and hence $aA=0=Aa$.  If $A$ has local units, then $aA\neq 0$ for any $a\neq 0$ and so $|a|>0$ for any $a\neq 0$.
\end{proof}

We show that $\tau$ is even better behaved when $A$ is unital and $\tau$ is normalized.  In particular, the trace is a bounded linear map.

\begin{Lemma}\label{l:normalized}
Let $A$ be a unital $\ast$-algebra with spanning semigroup $S$.  Suppose that $\tau$ is a normalized $S$-contractive faithful trace on $A$ with associated norm $\Vert\cdot\Vert$. Denote by $|\cdot |$ the norm induced on $A$ by $S$.
\begin{enumerate}
  \item $\Vert 1\Vert =1$.
  \item $\Vert a\Vert\leq |a|$ for all $a\in A$.
  \item $|\tau(a)|\leq \Vert a\Vert\leq |a|$ for all $a\in A$.
\end{enumerate}
\end{Lemma}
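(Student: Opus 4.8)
The plan is to bootstrap all three statements from the single computation $\Vert 1\Vert = 1$, using the machinery already assembled. First I would record that the identity of a unital $\ast$-algebra is automatically a projection: from $a = 1a = a1$ for all $a$ we get $a^\ast = a^\ast 1^\ast = 1^\ast a^\ast$, so $1^\ast$ is a two-sided identity and hence $1^\ast = 1$ by uniqueness of the identity. Consequently $\Vert 1\Vert^2 = \tau(1\cdot 1^\ast) = \tau(1) = 1$ by normalization, which gives part (1).

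For part (2) I would simply invoke Proposition~\ref{p:Passman.inequality}(1), which gives $\Vert ab\Vert \le \Vert a\Vert \cdot |b|$ for all $a,b\in A$. Specializing to the identity on the left yields $\Vert a\Vert = \Vert 1\cdot a\Vert \le \Vert 1\Vert \cdot |a| = |a|$, using part (1). Note that the hypotheses of Proposition~\ref{p:Passman.inequality} are in force, and since $A$ is unital it has local units, so $|\cdot|$ is indeed a norm as asserted in the statement.

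For part (3) the right-hand inequality $\Vert a\Vert \le |a|$ is exactly part (2), so only the left-hand inequality $|\tau(a)| \le \Vert a\Vert$ needs work. Here I would express the trace as an inner product against the identity: since $1^\ast = 1$, we have $\tau(a) = \tau(a\cdot 1^\ast) = (a,1)$ in the inner product of Proposition~\ref{p:adjoint.op}. The Cauchy--Schwarz inequality then yields $|\tau(a)| = |(a,1)| \le \Vert a\Vert \cdot \Vert 1\Vert = \Vert a\Vert$, once more invoking part (1).

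There is no substantive obstacle in this lemma: each assertion is a one-line consequence of the normalization $\tau(1)=1$, the Passman-type inequality of Proposition~\ref{p:Passman.inequality}, and Cauchy--Schwarz. The only point requiring a moment's care is the self-adjointness $1^\ast = 1$, which underlies both the evaluation $\Vert 1\Vert^2 = \tau(1)$ in part (1) and the identification $\tau(a) = (a,1)$ used in part (3).
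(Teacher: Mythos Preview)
Your proof is correct and follows essentially the same approach as the paper's: compute $\Vert 1\Vert = 1$ from normalization, apply Proposition~\ref{p:Passman.inequality}(1) with $1$ on the left for part (2), and use Cauchy--Schwarz for the trace bound in part (3). The only addition is your explicit verification that $1^\ast = 1$, which the paper leaves implicit.
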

\begin{proof}
Note that $|\cdot |$ is a norm by Proposition~\ref{p:Passman.inequality}.    For the first item, we have that $\Vert 1\Vert^2 = \tau(1\cdot 1^\ast)=\tau(1)=1$ since $\tau$ is normalized, whence $\Vert 1\Vert =1$.
 Therefore, $\Vert a\Vert = \Vert 1\cdot a\Vert\leq \Vert 1\Vert\cdot |a|=|a|$ by Proposition~\ref{p:Passman.inequality}(1), yielding the second item.  The third item follows from the first and second since by the Cauchy-Schwarz inequality, $|\tau(a)| = |\tau(a\cdot 1^\ast)|= |(a,1)|\leq \Vert a\Vert\cdot \Vert 1\Vert = \Vert a\Vert\leq |a|$.
\end{proof}

Next we construct from a faithful $S$-contractive trace on $A$, a faithful $R_n(S)$-contractive trace on $M_n(A)$.

\begin{Prop}\label{p:amplify.trace}
Let $A$ be a $\ast$-algebra and $\tau$ a (faithful) trace on $A$.  Then $\tau_n\colon M_n(A)\to \mathbb C$ defined by $\tau_n(B) = \sum_{i=1}^n\tau(b_{ii})$ is a (faithful) trace.  Moreover, if $S$ is a spanning semigroup for $A$ and $\tau$ is $S$-contractive, then $\tau_n$ is $R_n(S)$-contractive.
\end{Prop}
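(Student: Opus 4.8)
The plan is to verify the trace axioms (T1)--(T3) for $\tau_n$ by reducing each to the corresponding axiom for $\tau$ applied entrywise, then to dispatch faithfulness and $R_n(S)$-contractivity separately. The computation underlying everything is the formula for the diagonal entries of a product: for $B,C\in M_n(A)$ one has $(BC)_{ii}=\sum_k b_{ik}c_{ki}$, and since $(B^\ast)_{ki}=b_{ik}^\ast$ this specializes to $(BB^\ast)_{ii}=\sum_k b_{ik}b_{ik}^\ast$, so that $\tau_n(BB^\ast)=\sum_{i,k}\tau(b_{ik}b_{ik}^\ast)$. I will use this identity repeatedly.

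First I would check (T1): expanding gives $\tau_n(BC)=\sum_{i,k}\tau(b_{ik}c_{ki})$ and $\tau_n(CB)=\sum_{i,k}\tau(c_{ik}b_{ki})$, and applying (T1) for $\tau$ to each term together with an interchange of the summation indices $i\leftrightarrow k$ shows these coincide. For (T2), since $(B^\ast)_{ii}=b_{ii}^\ast$ we get $\tau_n(B^\ast)=\sum_i\overline{\tau(b_{ii})}=\overline{\tau_n(B)}$ by (T2) for $\tau$. For (T3), the displayed formula for $\tau_n(BB^\ast)$ exhibits it as a sum of the nonnegative quantities $\tau(b_{ik}b_{ik}^\ast)$, hence it is nonnegative. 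Faithfulness is then immediate: if $\tau$ is faithful and $\tau_n(BB^\ast)=0$, each summand $\tau(b_{ik}b_{ik}^\ast)$ vanishes (a sum of nonnegative reals equal to zero), so every $b_{ik}=0$ and $B=0$.

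The one step requiring care is $R_n(S)$-contractivity. Writing a typical element $R=\sum_{j\in\dom(\sigma)}s_jE_{\sigma(j)j}\in R_n(S)$, I would first compute $(BR)_{il}=b_{i,\sigma(l)}s_l$ when $l\in\dom(\sigma)$ and $0$ otherwise. Applying the diagonal formula to $C=BR$ then gives $\tau_n\bigl(BR(BR)^\ast\bigr)=\sum_i\sum_{l\in\dom(\sigma)}\tau\bigl(b_{i,\sigma(l)}s_l(b_{i,\sigma(l)}s_l)^\ast\bigr)$. Now $S$-contractivity of $\tau$, applied with $a=b_{i,\sigma(l)}\in A$ and $s=s_l\in S$, bounds each summand by $\tau(b_{i,\sigma(l)}b_{i,\sigma(l)}^\ast)$. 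Reindexing via the bijection $l\mapsto\sigma(l)$ from $\dom(\sigma)$ onto $\ran(\sigma)$ turns the bound into $\sum_i\sum_{k\in\ran(\sigma)}\tau(b_{ik}b_{ik}^\ast)$.

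The final point — the only place where the rook structure genuinely intervenes — is that $\ran(\sigma)$ need not be all of $[n]$, so one cannot directly read off $\tau_n(BB^\ast)$. Here I would invoke (T3) for $\tau$ once more: each omitted term $\tau(b_{ik}b_{ik}^\ast)$ with $k\notin\ran(\sigma)$ is nonnegative, so enlarging the inner sum back up to $\sum_{k=1}^n$ can only increase it. This yields $\tau_n\bigl(BR(BR)^\ast\bigr)\leq\sum_{i,k}\tau(b_{ik}b_{ik}^\ast)=\tau_n(BB^\ast)$, which is precisely $R_n(S)$-contractivity. I expect this last nonnegativity argument — the observation that discarding the out-of-range columns preserves rather than reverses the inequality — to be the main (though mild) obstacle, since it is the single point where the proof relies on positivity of $\tau$ on elements $aa^\ast$ rather than on mere linearity.
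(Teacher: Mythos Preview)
Your proof is correct and follows essentially the same approach as the paper: both compute $\tau_n(BB^\ast)=\sum_{i,j}\tau(b_{ij}b_{ij}^\ast)$, verify the trace axioms entrywise, and for contractivity compute $(BC)_{il}$ explicitly, apply $S$-contractivity termwise, and use the injectivity of $\sigma$ (so each column index appears at most once) together with nonnegativity of the omitted terms to bound by $\tau_n(BB^\ast)$. The paper's write-up is terser, but the argument is the same.
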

\begin{proof}
The fact that $\tau_n$ is a (faithful) trace is well known and standard.  The key points are $\tau_n(B^\ast) = \sum_{i=1}^n \tau(b_{ii}^\ast) = \sum_{i=1}^n\ov{\tau(b_{ii})}=\ov {\tau_n(B)}$,   $\tau_n(BC) = \sum_{i,j}\tau(b_{ij}c_{ji}) = \sum_{i,j}\tau(c_{ji}b_{ij})=\tau_n(CB)$ and
$\tau_n(BB^\ast) = \sum_{i,j} \tau(b_{ij}b_{ij}^\ast)$.

Suppose that $C=\sum_{j\in \dom(\sigma)}s_jE_{\sigma(j)j}\in R_n(S)$ and $B\in M_n(A)$.  If $BC=(d_{ij})$, then
\[d_{ij} = \begin{cases}b_{i\sigma(j)}s_j, & \text{if}\ j\in \dom(\sigma)\\ 0, & \text{else.}\end{cases}\]  Thus, using that $\tau$ is $S$-contractive, we have that
\begin{align*}
\tau_n(BC(BC)^\ast) &= \sum_{i=1}^n\sum_{j\in \dom(\sigma)}\tau(b_{i\sigma(j)}s_j(b_{i\sigma(j)}s_j)^\ast)\\ &\leq \sum_{i=1}^n\sum_{j\in \dom(\sigma)}\tau(b_{i\sigma(j)}b_{i\sigma(j)}^\ast)\leq \sum_{i,j}\tau(b_{ij}b_{ij}^\ast) = \tau_n(BB^\ast)
\end{align*}
since $\sigma$ is a partial bijection, and hence each index appears at most once as $\sigma(j)$.
\end{proof}

The proof of the next theorem is an adaptation of Passman's argument~\cite{Passmanidem} for the case of the standard trace on a group ring.  Our notion of $S$-contractive faithful trace is just an axiomatization of the properties he uses in his proof, and so many of the steps of the argument are taken almost verbatim.

\begin{Thm}\label{t:stably.finite}
Let $A$ be a $\ast$-algebra with spanning semigroup $S$ containing a set of local projections for $A$.  If there is an $S$-contractive faithful trace $\tau$ on $A$, then:
\begin{enumerate}
  \item $\tau(e)>0$ for all nonzero idempotents $e\in A$.
  \item $A$ is stably finite.
\end{enumerate}
\end{Thm}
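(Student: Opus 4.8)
The plan is to establish (1) first and then obtain (2) from it cheaply by amplification. Granting (1), stable finiteness is immediate: by Proposition~\ref{p:amplify.trace} the functional $\tau_n$ is a faithful $R_n(S)$-contractive trace on $M_n(A)$, and by Proposition~\ref{p:rook.matrix} the rook semigroup $R_n(S)$ is a spanning semigroup of $M_n(A)$ containing a set of local projections. Thus $M_n(A)$ satisfies all the hypotheses placed on $A$, so (1) holds for $M_n(A)$ as well, and it remains only to check Dedekind finiteness of $M_n(A)$. Concretely, let $E$ be an idempotent of $M_n(A)$ and suppose $u,v\in EM_n(A)E$ satisfy $uv=E$. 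Put $w=vu$. Since $Eu=u$, $uE=u$, $Ev=v$, $vE=v$, one computes $w^2=v(uv)u=vEu=vu=w$ and $Ew=wE=w$, so $E-w$ is an idempotent; moreover
\[\tau_n(E-w)=\tau_n(E)-\tau_n(vu)=\tau_n(E)-\tau_n(uv)=\tau_n(E)-\tau_n(E)=0.\]
By (1) applied to $M_n(A)$ this forces $E-w=0$, i.e.\ $vu=E$. Hence every idempotent of $M_n(A)$ is finite, and $A$ is stably finite.

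For (1), fix a nonzero idempotent $e\in A$. Using the local projections I would select a projection $p\in S$ with $e\in pAp$ and pass to the corner $B=pAp$, a \emph{unital} $\ast$-algebra with identity $p$ on which $\tau$ restricts to a faithful trace satisfying $\tau(p)>0$; all the inequalities of Proposition~\ref{p:Passman.inequality} and Lemma~\ref{l:normalized} remain valid on $B$. I then complete $B$ to a Hilbert space $H$ for the inner product $(a,b)=\tau(ab^\ast)$, with cyclic vector $\xi=p$. The decisive use of $S$-contractivity is that, through Proposition~\ref{p:Passman.inequality}(2), every left multiplication $L_a\colon b\mapsto ab$ satisfies $\Vert ab\Vert\le |a|\cdot\Vert b\Vert$ and is therefore bounded, \emph{even though $B$ carries no a~priori $C^\ast$-norm}; each $L_a$ thus extends to a bounded operator on $H$, with $L_a^\ast=L_{a^\ast}$ by Proposition~\ref{p:adjoint.op}(1). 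These operators generate a von Neumann algebra $M$ on which the vector state $\mathrm{tr}(T)=\langle T\xi,\xi\rangle$ is a faithful normal trace (traciality comes from $\tau(ab)=\tau(ba)$ on the weakly dense $\ast$-subalgebra $L_B$, and faithfulness/separatingness of $\xi$ from faithfulness of $\tau$). In particular $\mathrm{tr}(L_a)=\tau(a)$ for $a\in B$.

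The heart of the matter — and the step I expect to be the main obstacle — is that $e$ need \emph{not} be self-adjoint, so $\tau(e)\ge 0$ cannot be read off directly from axiom (T3). I would resolve this by replacing the oblique idempotent $L_e$ by its range projection. As $L_e$ is a bounded idempotent on $H$, let $P\in M$ be the orthogonal projection onto $\overline{L_eH}$. Then $PL_e=L_e$ because $\operatorname{ran}L_e\subseteq\operatorname{ran}P$, and $L_eP=P$ because $L_e$ acts as the identity on $L_eH$ (from $L_e^2=L_e$) and hence, by boundedness, on its closure $\operatorname{ran}P$. Using the trace property of $\mathrm{tr}$,
\[\tau(e)=\mathrm{tr}(L_e)=\mathrm{tr}(PL_e)=\mathrm{tr}(L_eP)=\mathrm{tr}(P)=\Vert P\xi\Vert^2\ge 0,\]
so $\tau(e)$ is real and nonnegative. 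If $\tau(e)=0$ then $P\xi=0$, and since $\xi$ is separating for $M$ this gives $P=0$, whence $L_e=PL_e=0$ and $e=L_e\xi=0$. Therefore $\tau(e)>0$ for every nonzero idempotent $e$, which is exactly (1).

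The crucial obstacle, to restate it, is the absence of both self-adjointness of $e$ and of any $C^\ast$-norm on $A$: the positivity of the trace on idempotents is genuinely an operator-theoretic fact (an idempotent and its range projection have equal trace), and what makes the completion legitimate in this purely algebraic setting is precisely Passman's inequalities of Proposition~\ref{p:Passman.inequality}, which furnish boundedness of the multiplication operators in place of a $C^\ast$-estimate. Once $H$, the bounded operators $L_a$, and the faithful tracial vector $\xi$ are in hand, the comparison of $L_e$ with its range projection completes the argument; everything else (the reduction to the corner $B$ and the passage from (1) to (2)) is routine.
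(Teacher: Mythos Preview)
Your reduction of (2) to (1) via amplification is exactly the paper's. For (1), however, you take a genuinely different route. After the common reduction to the unital corner $B=pAp$, the paper stays inside the uncompleted inner-product space and runs Passman's original approximation argument: with $R=eB$ and $d=d(1,R)$, it chooses $r_n\in R$ with $\Vert 1-r_n\Vert^2<d^2+1/n^4$ and, through the elementary estimates of Lemmas~\ref{l.passman.linear}--\ref{l:passman.trace.formula}, proves directly that $\tau(e)=\lim_n\Vert r_n\Vert^2\ge\Vert e\Vert^2/|e|^2>0$. You instead complete $B$ to a Hilbert space and invoke von Neumann algebra machinery---range projections, cyclic and separating vectors, normal traces---in effect re-enacting Kaplansky's original strategy, with Proposition~\ref{p:Passman.inequality} standing in for a $C^\ast$-estimate to make the regular representations bounded. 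Your argument is more conceptual and explains \emph{why} $\tau(e)\ge 0$ for an oblique idempotent (it coincides with the trace of a genuine projection), but it leans on operator-algebra facts you only sketch: that $\xi$ is separating for all of $M$ requires boundedness of the \emph{right} multiplications (so that $\xi$ is cyclic for a subalgebra of $M'$), and that the trace property passes from the weakly dense $L_B$ to $M$ is essentially the tracial case of Tomita--Takesaki. The paper's proof, by contrast, is entirely self-contained, using nothing beyond Cauchy--Schwarz and Proposition~\ref{p:Passman.inequality}, in keeping with its stated goal of avoiding operator-algebraic black boxes.
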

\begin{proof}
Let us assume for the moment that (1) is true and we prove  (2).  It suffices to show that $A$ is Dedekind finite under these hypotheses since   $M_n(A)$ has spanning semigroup $R_n(S)$ containing a set of local projections for $M_n(A)$ by Proposition~\ref{p:rook.matrix} and an $R_n(S)$-contractive faithful trace  by Proposition~\ref{p:amplify.trace}, and hence satisfies these hypotheses.

Suppose that $e$ is a nonzero idempotent and  $u,v\in eAe$ with $uv=e$.  Note that $f=vu\in eAe$ is an idempotent as $f^2=vuvu=veu=vu=f$.  Also $\tau(e)=\tau(uv)=\tau(vu)=\tau(f)$ and hence $\tau(e-f)=\tau(e)-\tau(f)=0$. Thus $e-f=0$ by (1) since $e-f$ is an idempotent.  We conclude that $vu=e$ and so $u$ is invertible. Thus $e$ is finite.

The proof of (1) is more difficult.  Let $e\in A$ be a nonzero idempotent.  Since $S$ contains a set of local projections for $A$, we may find a nonzero projection $p\in S$ with $e\in pAp$.  In particular, $\tau(p)=\tau(pp^\ast)>0$.   Then $pAp$ is a unital $\ast$-algebra with identity $p$ and $pSp$ is a spanning semigroup for $pAp$.  Define $\tau'\colon pAp\to \mathbb C$ by $\tau'(a) = \tau(a)/\tau(p)$.  Then $\tau'$ is a faithful normalized trace.  It is $pAp$-contractive since if $a\in pAp$ and  $s\in pSp$, then $\tau'(as(as)^\ast)=\tau(as(as)^\ast)/\tau(p)\leq \tau(aa^\ast)/\tau(p) = \tau'(aa^\ast)$.  Thus to prove (1), we may assume without loss of generality that $A$ is unital, $S$ contains the identity of $A$ and $\tau$ is normalized.  In particular, Lemma~\ref{l:normalized} applies.    The remainder of our proof now follows~\cite{Passmanidem} very closely.  We use $\Vert\cdot\Vert$ for the norm on $A$ coming from the inner product $(a,b) = \tau(ab^\ast)$ and we use $|\cdot |$ for the norm on $A$ induced by $S$.  Notice that since $\tau$ is faithful and $A$ is unital, $|\cdot |$ is indeed a norm by Proposition~\ref{p:Passman.inequality}.   Also, note that $|1|=1$ since $1\in S$ implies $|1|\leq 1$, but $1=\Vert 1\Vert\leq |1|$ by Lemma~\ref{l:normalized}.

If $X\subseteq A$ and $a\in A$, put \[d(a,X) = \inf\{\Vert a-b\Vert\mid b\in X\}.\]

\begin{Lemma}\label{l.passman.linear}
Let $L\subseteq A$ be a linear subspace of $A$ and $a,b\in L$.  Then \[|(b,a-c)|^2\leq \Vert b\Vert ^2\left(\Vert a-c\Vert^2- d(c,L)^2\right)\] for all $c\in A$.
\end{Lemma}
\begin{proof}
If $b=0$, then there is nothing to prove so assume that $b\neq 0$ and put $k=(a-c,b)/\Vert b\Vert^2$.  Then $a-kb\in L$, and so $\Vert a-kb-c\Vert\geq d(c,L)$.  Therefore,
\begin{align*}
\Vert a-c\Vert^2-d(c,L)^2& \geq \Vert a-c\Vert^2- \Vert a-kb-c\Vert^2\\ & = \Vert a-c\Vert^2 - (a-c-kb,a-c-kb)\\
&= \Vert a-c\Vert^2 - \Vert a-c\Vert^2+\ov k(a-c,b) +k(b,a-c)-k\ov k\Vert b\Vert^2\\
&= k\ov k\Vert b\Vert^2+k\ov{(a-c,b)}-k\ov k\Vert b\Vert^2\\ & = k\ov k\Vert b\Vert^2 = \frac{|(a-c,b)|^2}{\Vert b\Vert^2}.
\end{align*}
The lemma now follows.
\end{proof}

Put $R=eA$; note that $R$ is a linear subspace and a right ideal.   Set $d=d(1,R)$.  Then we can choose, for each $n>0$, an element $r_n\in R$ such that $\Vert 1-r_n\Vert^2<d^2+1/n^4$.    Note that $er_n=r_n$ for all $n\geq 1$.

\begin{Lemma}\label{l:passman.limits}
There exists positive constants $M,M'$  such that:
\begin{enumerate}
  \item $|\Vert r_n\Vert^2-\tau(r_n)|< M/n$;
  \item $\Vert r_ne-e\Vert< M'/n$.
\end{enumerate}
\end{Lemma}
\begin{proof}
First note that $\Vert r_n-1\Vert< \sqrt{d^2+1/n^4}\leq d+1$, an inequality that we shall use throughout. Now we prove (1).  Observe that \[\Vert r_n\Vert\leq \Vert r_n-1\Vert+\Vert 1\Vert < d+1+1=d+2\] by Lemma~\ref{l:normalized}.  By Lemma~\ref{l.passman.linear} with $L=R$, $a=b=r_n$ and $c=1$, we have that \[|(r_n, r_n-1)|^2\leq \Vert r_n\Vert^2(\Vert r_n-1\Vert^2-d(1,R)^2) < (d+2)^2(1/n^4),\] and so $|(r_n,r_n-1)|< (d+2)/n^2\leq (d+2)/n$.  But \[(r_n,r_n-1) = (r_n,r_n)-(r_n,1) = \Vert r_n\Vert^2-\tau(r_n),\] and so (1) holds with $M=d+2$.

For the second item, we compute  that
\[\Vert r_ne-e\Vert^2 = ((r_n-1)e,(r_n-1)e) = ((r_n-1)ee^\ast,r_n-1)\] using Proposition~\ref{p:adjoint.op}.  Since $r_n\in R$ and  $(r_n-1)ee^\ast = (r_ne-e)e^\ast\in R$, we have by Lemma~\ref{l.passman.linear} with $a=r_n$, $b=(r_n-1)ee^\ast$ and $c=1$ that
\[|((r_n-1)ee^\ast,r_n-1)|^2\leq \Vert (r_n-1)ee^*\Vert^2(\Vert r_n-1\Vert^2-d(1,R)^2)<\Vert (r_n-1)ee^\ast\Vert^2/n^4.\] We conclude that
\[\Vert r_ne-e\Vert^2< \Vert (r_n-1)ee^\ast\Vert/n^2\leq \Vert r_n-1\Vert\cdot  |ee^\ast|/n^2\leq (d+1)|ee^\ast|/n^2\] by Proposition~\ref{p:Passman.inequality}.  Thus (2) holds with  $M' = \sqrt{(d+1)|ee^\ast|}$.
\end{proof}

A final lemma will finish the proof of Theorem~\ref{t:stably.finite}(1).

\begin{Lemma}\label{l:passman.trace.formula}
Retaining the above notation:
\begin{enumerate}
\item $\tau(e) = \lim_{n\to \infty} \Vert r_n\Vert^2$ and hence is a nonnegative real number;
\item $\tau(e)\geq \Vert e\Vert^2/|e|^2>0$.
\end{enumerate}
\end{Lemma}
\begin{proof}
We compute $|\tau(e)-\Vert r_n\Vert^2|\leq |\tau(e)-\tau(r_n)|+|\tau(r_n)-\Vert r_n\Vert^2|$.  By Lemma~\ref{l:passman.limits}(1), there is a positive constant $M$ with $|\tau(r_n)-\Vert r_n\Vert^2|< M/n$.  Since $\tau(r_n)=\tau(er_n)=\tau(r_ne)$, we have that $|\tau(e)-\tau(r_n)| =|\tau(e)-\tau(r_ne)| = |\tau(e-r_ne)|\leq \Vert e-r_ne\Vert<M'/n$ for some positive constant $M'$ by Lemma~\ref{l:normalized}(3) and Lemma~\ref{l:passman.limits}(2).  We conclude that $|\tau(e)-\Vert r_n\Vert^2|\to 0$ and so $\Vert r_n\Vert^2\to \tau(e)$.  Thus $\tau(e)$ is a nonnegative real number.

For (2), observe that $\Vert e\Vert\leq \Vert e-r_ne\Vert +\Vert r_ne\Vert\leq \Vert e-r_ne\Vert+\Vert r_n\Vert \cdot |e|$ by Lemma~\ref{p:Passman.inequality}.  By Lemma~\ref{l:passman.limits}(2), $\Vert e-r_ne\Vert\to 0$ and by (1), $\Vert r_n\Vert \to \sqrt{\tau(e)}$.  It follows that $\tau(e)\geq\Vert e\Vert ^2/|e|^2>0$.
\end{proof}

This completes the proof of Theorem~\ref{t:stably.finite}.
\end{proof}

\section{Constructing faithful traces on ample groupoid algebras}
Let $\mathscr G$ be an ample groupoid.  Note that $\Gamma_c(\mathscr G\skel 0)$ is the generalized Boolean algebra of compact open subsets of $\mathscr G\skel 0$.  A \emph{mean} on $\mathscr G\skel 0$ is a finitely additive map $\mu\colon \Gamma_c(\mathscr G\skel 0)\to [0,\infty)$.  That is, $\mu(U\cup V) =\mu(U)+\mu(V)$ whenever $U\cap V=\emptyset$.  If $\mathscr G\skel 0$ is compact, then the mean $\mu$ is \emph{normalized} if $\mu(\mathscr G\skel 0)=1$.  The mean $\mu$ is \emph{invariant} if $\mu(U\inv U) = \mu (UU\inv)$ for all $U\in \Gamma_c(\mathscr G)$.  Invariant means have been studied by many authors, not always under the same name, and often in the language of Boolean inverse semigroups~\cite{lawsonetalmean,starlingmean}. Munn and Crabb considered these implicitly quite early on in their study of traces on $E$-unitary inverse semigroups~\cite{CrabbMunn}.

An \emph{invariant measure} on $\mathscr G\skel 0$ is a Radon measure $\nu$ on $\mathscr G\skel 0$ such that $\nu(U\inv U)=\nu (UU\inv)$ for all $U\in \Gamma(\mathscr G)$; the measure $\nu$ is not required to be finite.  If $\mathscr G$ is a Hausdorff \'etale groupoid with compact unit space, then invariant measures give rise to traces on the reduced $C^*$-algebra of $\mathscr G$. (This is true for noncompact unit spaces, as well, provided the invariant measure happens to be finite.)  We will follow the same scheme here.  Sticking to the world of complex algebras avoids integrability issues and will allow us to handle to some degree non-Hausdorff groupoids.

We begin by showing that each invariant mean on $\mathscr G\skel 0$ extends uniquely to an invariant measure.

\begin{Lemma}\label{l:extension}
Let $\mathscr G$ be an ample groupoid and $\mu$ an invariant mean on $\mathscr G\skel 0$.  Then there is a unique invariant measure extending $\mathscr G\skel 0$.  Conversely, if $\mu$ is an invariant measure on $\mathscr G\skel 0$, then $\mu|_{\Gamma_c(\mathscr G\skel 0)}$ is an invariant mean.
\end{Lemma}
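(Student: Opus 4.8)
The plan is to handle the two directions separately, with the forward (extension) direction resting on the Riesz--Markov--Kakutani representation theorem. The converse is immediate: a Radon measure $\nu$ is finite on compact sets, so $\nu(U)<\infty$ for every $U\in\Gamma_c(\mathscr G\skel 0)$, and countable additivity of $\nu$ restricts to finite additivity on $\Gamma_c(\mathscr G\skel 0)$. Moreover the invariance condition $\nu(U\inv U)=\nu(UU\inv)$, required for \emph{all} $U\in\Gamma(\mathscr G)$, specializes in particular to the compact open bisections, which is exactly the defining property of an invariant mean. Hence $\nu|_{\Gamma_c(\mathscr G\skel 0)}$ is an invariant mean.

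For the forward direction I would first build a positive linear functional. Since $\mathscr G\skel 0$ is locally compact Hausdorff with a basis of compact open sets, it is zero-dimensional, and the $\mathbb C$-span of the indicators $\{1_U\mid U\in\Gamma_c(\mathscr G\skel 0)\}$ is precisely the algebra of locally constant, compactly supported functions on $\mathscr G\skel 0$. Define $\Lambda$ on this algebra by $\Lambda(1_U)=\mu(U)$, extended linearly; well-definedness follows because any two representations admit a common refinement into disjoint compact open pieces, so finite additivity of $\mu$ forces agreement. A nonnegative locally constant function with compact support is a nonnegative combination of indicators of disjoint compact open sets, so $\Lambda$ is positive, and for functions supported in a fixed compact open $K$ one has $|\Lambda(f)|\leq\mu(K)\,|f|_\infty$. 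Because every $f\in C_c(\mathscr G\skel 0)$ with support in a compact open $K$ is uniformly approximated by locally constant functions supported in $K$ (a compact zero-dimensional Hausdorff space is a Stone space, on which continuous functions are uniform limits of locally constant ones), this bound lets me extend $\Lambda$ to a positive linear functional on $C_c(\mathscr G\skel 0)$. The Riesz representation theorem then yields a Radon measure $\nu$ with $\nu(U)=\Lambda(1_U)=\mu(U)$ for all compact open $U$; uniqueness of $\nu$ among Radon measures extending $\mu$ follows from the uniqueness clause of Riesz together with the density just used.

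The step I expect to require the most care is promoting invariance from compact open bisections to arbitrary $U\in\Gamma(\mathscr G)$, since the mean only controls the compact case. For a compact open bisection the identity is immediate, as $U\inv U=\dom(U)$ and $UU\inv=\ran(U)$ lie in $\Gamma_c(\mathscr G\skel 0)$ and $\mu$ is invariant. For a general open bisection $U$, I would use inner regularity: because $\mathscr G\skel 0$ has a basis of compact open sets, $\nu(\dom U)=\sup\{\nu(K)\mid K\subseteq\dom U,\ K\in\Gamma_c(\mathscr G\skel 0)\}$, and for each such $K$ the set $V=U\cap\dom\inv(K)=(\dom|_U)\inv(K)$ is a compact open bisection with $\dom(V)=K$ and $\ran(V)=L$ a compact open subset of $\ran(U)$. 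The compact case gives $\nu(K)=\mu(K)=\mu(L)=\nu(L)\leq\nu(\ran U)$, whence $\nu(\dom U)\leq\nu(\ran U)$; applying the same argument to $U\inv$ gives the reverse inequality, so $\nu(U\inv U)=\nu(UU\inv)$. This shows $\nu$ is an invariant measure and completes the extension, establishing the claimed correspondence between invariant means and invariant measures.
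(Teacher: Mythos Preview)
Your proposal is correct and follows essentially the same route as the paper: build a positive linear functional on locally constant compactly supported functions from $\mu$, extend to $C_c(\mathscr G\skel 0)$ by density on each compact open set, invoke Riesz for existence and uniqueness, and then promote invariance from compact open bisections to arbitrary open bisections via inner regularity. The paper phrases the last step by writing $U$ as a directed union of compact open bisections $V\subseteq U$ and comparing $\sup_V\mu(V\inv V)$ with $\sup_V\mu(VV\inv)$, whereas you pull back compact open $K\subseteq\dom(U)$ along the homeomorphism $\dom|_U$; these are equivalent.
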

\begin{proof}
Since Radon measures are finite on compact sets, it is clear that the restriction of an invariant measure to $\Gamma_c(\mathscr G\skel 0)$ is an invariant mean.  We turn to the converse.

It is standard that if $X$ is a locally compact, Hausdorff and totally disconnected space, then any finitely additive, nonnegative real-valued mapping defind on its Boolean algebra of compact open sets extends uniquely to a Radon measure.  I was unable to find a reference treating the noncompact case, so I am including the proof for completeness.

Let $A$ be the complex $\ast$-algebra of locally constant mappings $f\colon \mathscr G\skel 0\to \mathbb C$ with compact support under pointwise multiplication.   Each element of $A$ can be expressed in the form $f=\sum_{U\in \Gamma_c(\mathscr G\skel 0)}c_U1_U$ (one may even write $f$ as a linear combination of indicator functions of disjoint compact open sets), and so we can define a linear functional $\lambda\colon A\to \mathbb C$ by \[\lambda(f) = \sum_{U\in \Gamma_c(\mathscr G\skel 0)}c_U\mu(U)= \int fd\mu.\]  Here we are exploiting that $f$ is a simple function, and hence the fact  that the integral is well defined and behaves as expected requires only finite additivity and nonnegativity of $\mu$.  Note that if $f\geq 0$, then $\lambda(f)\geq 0$, and so $\lambda (\ov ff)\geq 0$ for any $f\in A$.   Also writing $f=\sum_{i=1}^n c_i1_{U_i}$ with $U_1,\ldots, U_n\in \Gamma_c(\mathscr G\skel 0)$ pairwise disjoint, we see that $|\lambda(f)|\leq \sum_{i=1}^n|c_i|\mu(U_i)=\lambda(|f|)$.  Of course, this integral is only defined for the moment on locally constant functions with compact support.

The approach is then to show that $\lambda$ extends to a positive functional on $C_c(\mathscr G\skel 0)$ (the continuous functions with compact support on $\mathscr G\skel 0$) and apply the Riesz representation theorem.  Let $f\in C_c(\mathscr G\skel 0)$ with $K=\ov{\supp(f)}$, which is a compact set.  Then we can cover $K$ by compact open sets, pass to a finite subcover and take the union of this subcover to find a compact open set $K_0$ with $K\subseteq K_0$.  Since $K_0$ is clopen and compact, we have that $C(K_0)$ is a sub-$\ast$-algebra of $C_c(\mathscr G\skel 0)$ (via extension by $0$ outside of $K_0$) and $f\in C(K_0)$.   It follows that $C_c(\mathscr G\skel 0)$ is the directed union of the subalgebras $C(K_0)$ with $K_0\subseteq \mathscr G\skel 0$ compact open.
Let $A_{K_0}$ be the subalgebra of $A$ consisting of those functions with support contained in $K_0$; this is exactly the algebra of locally constant functions on $K_0$. We first show that there is an extension $\lambda_{K_0}$ of $\lambda|_{A_{K_0}}$ to $C(K_0)$ that is a positive linear functional.  Afterwards, we show that if $K_0\subseteq K_1$, then $(\lambda_{K_1})|_{C(K_0)} = \lambda_{K_0}$, and hence we can extend $\lambda$ to $C_c(\mathscr G\skel 0)$ (via the universal property of a direct limit).

 The $\ast$-algebra $A_{K_0}$ separates points of $K_0$ since $\mathscr G\skel 0$ is Hausdorff with a basis of compact open sets and $K_0$ is compact open in $\mathscr G\skel 0$.  Thus, $A_{K_0}$ is dense in the sup-norm on $C(K_0)$ by the Stone-Weierstrass theorem.  Let $f\in C(K_0)$ and choose a sequence $(f_n)$ of elements of $A_{K_0}$ converging to $f$ in the sup-norm.  We first show that $\lambda(f_n)$ converges; we then show the limit is independent of the chosen sequence $(f_n)$.
  We check that $(\lambda(f_n))$ is a  Cauchy sequence.  Let $\varepsilon>0$ be given.   Choose $N>0$ so that $\vert f_n-f_m\vert_{\infty}<\varepsilon$ for $m,n\geq N$.  Then we compute, for $m,n\geq N$,
 \[\vert \lambda(f_n)-\lambda(f_m)\vert = \left\vert\int_{K_0} f_n-f_m d\mu\right\vert\leq \int_{K_0}\vert f_n-f_m\vert d\mu\leq \varepsilon\mu(K_0).\]  Since $\mu(K_0)\geq 0$ and $\varepsilon$ was arbitrary, we deduce that $(\lambda(f_n))$ is a Cauchy sequence and hence has a limit.  To see that this limit is independent of the choice of $(f_n)$, suppose that $g_n\to f$ with $g_n\in A_{K_0}$ for all $n\geq 1$.  Let $\varepsilon>0$.  We can choose $N>0$ so that $|f_n-f|_{\infty}<\varepsilon/2$ and $|g_n-f|_{\infty}<\varepsilon/2$ for all $n\geq N$.  Then $|f_n-g_n|_{\infty}\leq |f_n-f|+|f-g_n|_{\infty}<\varepsilon$.  Thus we have that
 \[|\lambda(f_n)-\lambda(g_n)| = \left\vert\int_{K_0}f_n-g_n d\mu\right\vert\leq \int_{K_0}\vert f_n-g_n\vert d\mu\leq \varepsilon \mu(K_0).\]  Since $\mu(K_0)\geq 0$ and $\varepsilon$ was arbitrary, we deduce that $\lim \lambda(f_n)=\lim \lambda(g_n)$, yielding a well-defined value $\lambda_{K_0}(f)$.  Clearly, $\lambda_{K_0}$ is a positive functional since if $f_n\to f$, $g_n\to g$ and $c\in \mathbb C$, then $cf_n+g_n\to cf+g$, $\ov f_n\to \ov f$ and $\ov f_nf_n\to \ov ff$.  Finally, note that if $K_0\subseteq K_1$ and $f_n\to f$ in $C(K_0)$, then since we can view $C(K_0)\subseteq C(K_1)$ via extension by zero, we have that $f_n\to f$ in $C(K_1)$.  It then follows that $\lambda_{K_0}(f) =\lim \lambda(f_n) = \lambda_{K_1}(f)$.  We may therefore define $\lambda'\colon C_c(\mathscr G\skel 0)\to \mathbb C$ by $\lambda'(f)=\lambda_{K_0}(f)$ where $K_0$ is any compact open set containing $\supp(f)$ (and there is at least one such, as observed earlier).  If $f\in A$, then $f=\lim f_n$ where $f_n=f$ for all $n\geq 1$ and thus $\lambda'(f)=\lambda(f)$.  Therefore, $\lambda'$ is a positive linear functional extending $\lambda$.

 By the Riesz representation theorem, there is a unique Radon measure $\nu$ on $\mathscr G\skel 0$ with $\lambda'(f) = \int fd\nu$.  Note that if $U\in \Gamma_c(\mathscr G\skel 0)$, then $\nu(U) = \int 1_Ud\nu =\lambda'(1_U)=\lambda(1_U)=\mu(U)$, and so $\nu$ extends $\mu$.  Also observe that if $\mu'$ is any Radon measure extending $\mu$, then $\int fd\mu' = \lambda(f)$ for any  $f\in A$.  But we have already seen that, for any $f\in C_c(\mathscr G\skel 0)$, we can find a compact open set $K_0$ containing $\overline{\supp(f)}$ and a sequence $(f_n)$ in $A_{K_0}$ converging to $f$ in the sup-norm, and so we deduce that $\int fd\mu'=\lim\int f_nd\mu'=\lim \lambda(f_n)= \lambda'(f)$ (by bounded convergence), and hence $\mu'=\nu$ by uniqueness in the Riesz representation theorem.  It remains to show that $\nu$ is invariant.

 Let $U\in \Gamma(\mathscr G)$ and $X=\{V\in \Gamma_c(\mathscr G)\mid V\subseteq U\}$.  Since  $\mathscr G$ has a basis of compact open bisections, we have that $U = \bigcup_{V\in X} V$. Also, if $V,V'\in X$, then $V\cup V'\in X$ since any open subset of a bisection is a bisection. Note that  $U\inv U = \bigcup_{V\in X} V\inv V$ and $UU\inv = \bigcup_{V\in X}VV\inv$.  Each $V\inv V$ with $V\in X$ is compact open. Also, if $K\subseteq U\inv U$ is compact, then $K\subseteq V\inv V$ for some $V\in X$ since the set $X$ is directed upwards and $K$ is compact.  By inner regularity, $\nu(U\inv U)=\sup \nu(K)$ where $K$ runs over the compact subsets of $U\inv U$.  But by the preceding discussion, this sup is, in fact, $\sup_{V\in X}\mu(V\inv V)$.  Similarly, $\nu(UU\inv) = \sup_{V\in X}\mu(VV\inv)$.  Since $\mu$ is invariant, $\mu(V\inv V)=\mu(VV\inv)$ for all $V\in X$, and hence $\nu(U\inv U)=\nu(UU\inv)$.  This completes the proof.
\end{proof}

The above proof shows that any mean on $\mathscr G\skel 0$ can be extended to a unique Radon measure, but we shall only be interested in invariant means.

From now on, we shall abuse notation and write $\mu$ for both the mean and the unique Radon measure extending it.
We note that the invariant  measure extending an invariant mean need not be a finite measure when $\mathscr G\skel 0$ is not compact.

\begin{Prop}\label{p:restriction}
Let $\mathscr G$ be an ample groupoid and $f\in \mathbb C\mathscr G$.
\begin{enumerate}
\item If $\mathscr G$ is Hausdorff, then $f|_{\mathscr G\skel 0}$ is locally constant with compact support.
\item In general, $f|_{\mathscr G\skel 0}$ is Borel measurable, simple and integrable with respect to any Radon measure $\mu$ on $\mathscr G\skel 0$.
\end{enumerate}
\end{Prop}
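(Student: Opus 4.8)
The plan is to reduce everything to the defining expression of $f$ as a member of the spanning set: write $f=\sum_{i=1}^n c_i 1_{U_i}$ with $c_i\in\mathbb C$ and $U_i\in\Gamma_c(\mathscr G)$ compact open bisections. Restricting pointwise to the unit space gives $f|_{\mathscr G\skel 0}=\sum_{i=1}^n c_i 1_{U_i\cap\mathscr G\skel 0}$, so the whole statement becomes a matter of understanding the sets $U_i\cap\mathscr G\skel 0$. Two observations drive the argument. First, since $\mathscr G\skel 0$ is open in the \'etale groupoid $\mathscr G$ and each $U_i$ is open, the intersection $U_i\cap\mathscr G\skel 0$ is open in $\mathscr G\skel 0$. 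Second, every unit $x\in U_i$ satisfies $x=\dom(x)\in\dom(U_i)$, so that $U_i\cap\mathscr G\skel 0\subseteq\dom(U_i)=U_i\inv U_i$, which is a compact open subset of the Hausdorff space $\mathscr G\skel 0$.

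For part (2), the first observation shows that $f|_{\mathscr G\skel 0}$ is a finite $\mathbb C$-linear combination of indicator functions of open (hence Borel) subsets of $\mathscr G\skel 0$; thus it is Borel measurable and, assuming only finitely many values, simple. For integrability against a Radon measure $\mu$, I would bound $|f|_{\mathscr G\skel 0}|\le\sum_{i=1}^n|c_i|\,1_{U_i\cap\mathscr G\skel 0}$ pointwise and invoke the second observation: each $U_i\cap\mathscr G\skel 0$ lies in the compact set $\dom(U_i)$, on which the Radon measure $\mu$ is finite, whence $\int_{\mathscr G\skel 0}|f|_{\mathscr G\skel 0}|\,d\mu\le\sum_{i=1}^n|c_i|\,\mu(\dom(U_i))<\infty$.

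For part (1), when $\mathscr G$ is Hausdorff the set $\mathscr G\skel 0$ is closed and each compact $U_i$ is closed, so now $U_i\cap\mathscr G\skel 0$ is not merely open but clopen (and compact) in $\mathscr G\skel 0$. Hence each $1_{U_i\cap\mathscr G\skel 0}$ is locally constant with compact support, and a finite linear combination of such functions is again locally constant, with $\supp(f|_{\mathscr G\skel 0})$ contained in the compact set $\bigcup_{i=1}^n(U_i\cap\mathscr G\skel 0)$; being clopen inside a compact set, it is itself compact.

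The substantive point, and the only place where the two parts genuinely diverge, is exactly this closedness: in the non-Hausdorff case $U_i$ need not be closed, so $U_i\cap\mathscr G\skel 0$ is open but possibly not closed in $\mathscr G\skel 0$, which is why local constancy can fail and must be replaced by plain Borel measurability. The containment $U_i\cap\mathscr G\skel 0\subseteq\dom(U_i)$ is what salvages integrability regardless of Hausdorffness, so I expect the verification of this containment (together with the openness of $U_i\cap\mathscr G\skel 0$ in $\mathscr G\skel 0$) to be the conceptual crux, with the remaining bookkeeping being routine.
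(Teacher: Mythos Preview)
Your proof is correct and follows essentially the same approach as the paper. For part~(2) the arguments are virtually identical: write $f$ as a finite combination $\sum c_i 1_{U_i}$, restrict, note that each $U_i\cap\mathscr G\skel 0$ is open (hence Borel) and contained in the compact open set $U_i\inv U_i=\dom(U_i)$, and conclude measurability, simplicity, and integrability. For part~(1) there is only a cosmetic difference: the paper invokes directly the known fact that in the Hausdorff case every element of $\mathbb C\mathscr G$ is already locally constant with compact support on all of $\mathscr G$, so restriction to the closed subspace $\mathscr G\skel 0$ preserves both properties; you instead stay with the decomposition and observe that each $U_i\cap\mathscr G\skel 0$ is compact clopen. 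Both routes are equally short and yield the same conclusion.
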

\begin{proof}
If $\mathscr G$ is Hausdorff, then $\mathscr G\skel 0$ is clopen and $f$ is locally constant. It follows that $f|_{\mathscr G\skel 0}$ is locally constant and $\supp(f|_{\mathscr G\skel 0})=\supp(f)\cap \mathscr G\skel 0$ is compact.  For the second item, we can write $f=\sum_{U\in \Gamma_c(\mathscr G)}c_U1_U$.  Then $f|_{\mathscr G\skel 0} = \sum_{U\in \Gamma_c(\mathscr G)}c_U1_{U\cap \mathscr G\skel 0}$.  Since $\mathscr G\skel 0$ is open, $U\cap \mathscr G\skel 0$ is open and hence Borel.  Moreover, $U\cap \mathscr G\skel 0\subseteq U\inv U$ and $U\inv U\subseteq \mathscr G\skel 0$ is compact open.  Thus $\mu(U\cap \mathscr G\skel 0)\leq \mu(U\inv U)<\infty$ for any Radon measure $\mu$.  It follows that $1_{U\cap \mathscr G\skel 0}$ is Borel measurable and $\mu$-integrable for any $U\in \Gamma_c(\mathscr G)$, whence $f|_{\mathscr G\skel 0}$ is as well. Moreover, $f|_{\mathscr G\skel 0}$ is a simple function being a linear combination of indicator functions of open sets.
\end{proof}

We shall  show how to obtain traces from invariant means or, equivalently, invariant measures.  We follow the same idea commonly used for Hausdorff \'etale groupoids with compact unit spaces in the $C^*$-algebra setting.  In what follows, it will be convenient to describe $(f\ast f^\ast)|_{\mathscr G\skel 0}$ for $f\in \mathbb C\mathscr G$.

\begin{Lemma}\label{l:positive}
Let $f\in \mathbb C\mathscr G$ and $x\in \mathscr G\skel 0$.  Then we have
\begin{equation}\label{eq:formula}
(f\ast f^\ast)(x) =\sum_{\ran(\gamma)=x}|f(\gamma)|^2
\end{equation}
 and hence $\supp((f\ast f^\ast)|_{\mathscr G\skel 0}) = \ran(\supp(f))$.   Moreover, if $\mathscr G$ is Hausdorff, then $\ran(\supp(f))$ is compact open, while in general it is a Borel set, which has finite measure with respect to any Radon measure on $\mathscr G\skel 0$.
\end{Lemma}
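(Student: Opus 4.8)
The plan is to prove the pointwise formula \eqref{eq:formula} first and then read off the remaining three assertions from it, the decisive feature being that every term on the right of \eqref{eq:formula} is nonnegative.

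To establish \eqref{eq:formula}, fix a unit $x\in\mathscr G\skel 0$ and unwind the definitions of $\ast$ and of the convolution. A composable pair $(\alpha,\beta)$ with $\alpha\beta=x$ satisfies $\ran(\alpha)=\ran(\alpha\beta)=x$ and $\beta=\alpha\inv(\alpha\beta)=\alpha\inv x=\alpha\inv$, since $x$ is a unit and $\dom(\alpha\inv)=\ran(\alpha)=x$; conversely each $\alpha$ with $\ran(\alpha)=x$ gives the pair $(\alpha,\alpha\inv)$ with product $x$. Therefore
\[
(f\ast f^\ast)(x)=\sum_{\ran(\alpha)=x}f(\alpha)\,f^\ast(\alpha\inv)=\sum_{\ran(\gamma)=x}f(\gamma)\,\ov{f(\gamma)}=\sum_{\ran(\gamma)=x}|f(\gamma)|^2,
\]
using $f^\ast(\alpha\inv)=\ov{f(\alpha)}$. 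This sum is finite: writing $f=\sum_i c_i 1_{U_i}$ with finitely many $U_i\in\Gamma_c(\mathscr G)$, each $\ran|_{U_i}$ is injective, so only finitely many $\gamma\in\supp(f)$ have $\ran(\gamma)=x$.

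The support identity is now immediate, since the summands are nonnegative: $(f\ast f^\ast)(x)\neq 0$ precisely when some $\gamma\in\supp(f)$ has $\ran(\gamma)=x$, i.e. $\supp\big((f\ast f^\ast)|_{\mathscr G\skel 0}\big)=\ran(\supp(f))$. In the Hausdorff case $f$ is locally constant and takes finitely many values, so $\supp(f)=\mathscr G\setminus f\inv(0)$ is clopen; since $\supp(f)$ is closed and $f$ has compact support, $\supp(f)$ is compact, hence compact open, and since $\ran$ is continuous and (the groupoid being \'etale) open, $\ran(\supp(f))$ is compact open. In general I would sidestep $\supp(f)$ and apply Proposition~\ref{p:restriction}(2) to $f\ast f^\ast\in\mathbb C\mathscr G$ (note $f^\ast\in\mathbb C\mathscr G$ as $1_U^\ast=1_{U\inv}$): the restriction $g=(f\ast f^\ast)|_{\mathscr G\skel 0}$ is Borel, simple and $\mu$-integrable for every Radon measure $\mu$. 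By \eqref{eq:formula} it is also nonnegative, so writing $g=\sum_{i=1}^k c_i 1_{B_i}$ with $c_i>0$ and the $B_i$ disjoint Borel sets gives $\ran(\supp(f))=\supp(g)=\bigcup_i B_i$, a Borel set, and integrability forces $\mu(\ran(\supp(f)))=\sum_i\mu(B_i)\le(\min_i c_i)\inv\int g\,d\mu<\infty$.

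The only delicate point is the composition bookkeeping in the first step: one must use the paper's convention that $\alpha\beta$ is defined when $\dom(\alpha)=\ran(\beta)$ to see that the composable pairs lying over a unit $x$ are exactly $(\alpha,\alpha\inv)$ with $\ran(\alpha)=x$. Once \eqref{eq:formula} is in place, positivity of the summands and the measurability input already supplied by Proposition~\ref{p:restriction} make the rest routine.
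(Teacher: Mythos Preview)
Your proof is correct and follows essentially the same route as the paper: you unwind the convolution at a unit to obtain \eqref{eq:formula}, deduce the support identity from nonnegativity of the summands, handle the Hausdorff case via compact openness of $\supp(f)$, and in general invoke Proposition~\ref{p:restriction} to get that $g=(f\ast f^\ast)|_{\mathscr G\skel 0}$ is Borel, simple, nonnegative and integrable, bounding $\mu(\ran(\supp(f)))$ by $(\min_i c_i)^{-1}\int g\,d\mu$. Your write-up is simply a bit more explicit in the bookkeeping (e.g., identifying the composable pairs over $x$ and noting finiteness of the sum), but the argument is the same.
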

\begin{proof}
It follows from the definition that \[(f\ast f^\ast)(x)=\sum_{\ran(\gamma)=x}f(\gamma)\ov {f(\gamma)} = \sum_{\ran(\gamma)=x}|f(\gamma)|^2\] and hence the first claim follows.  The second claim is obvious if $f=0$, and so we suppose $f\neq 0$.  If $\mathscr G$ is Hausdorff, then $\supp(f)$ is compact open and hence $\ran(\supp(f))$ is also compact open.  For the general case, note that if $g=(f\ast f^\ast)|_{\mathscr G\skel 0}$, then $g$ is Borel measurable by Proposition~\ref{p:restriction}, and hence $\ran(\supp(f))=g\inv(\mathbb C\setminus \{0\})$ is Borel.  Moreover, $g\geq 0$ by \eqref{eq:formula}.  Since $g$ is $\mu$-integrable and simple (again by Proposition~\ref{p:restriction}), it takes on only finitely values, and so there is a least nonzero value $m$ taken on by $g$ and $m>0$.  Therefore, $\infty>\int gd\mu\geq \int_{\ran(\supp(f))}md\mu =m\mu(\ran(\supp(f)))$, and so $\ran(\supp(f))$ has finite measure with respect to $\mu$.
\end{proof}

We now construct a trace on $\mathbb C\mathscr G$ from an invariant measure.  In the case that $\mathscr G$ is Hausdorff and $\mathscr G\skel 0$ is compact, this trace extends to the reduced $C^*$-algebra of $\mathscr G$, cf.~\cite{starlingmean}.  If $\mathscr G$ is Hausdorff and the invariant measure is finite, then the trace can still be extended to the reduced $C^*$-algebra of $\mathscr G$.

\begin{Thm}\label{t:traces}
Let $\mathscr G$ be an ample groupoid and $\mu$ an invariant measure on $\mathscr G\skel 0$.  Then $\tau\colon\mathbb C\mathscr G\to \mathbb C$ given by
\[\tau(f) = \int f|_{\mathscr G\skel 0}d\mu\] is a $\Gamma_c(\mathscr G)$-contractive trace.  Moreover, if $\mathscr G\skel 0$ is compact and $\mu$ is normalized, then $\tau$ is normalized.
\end{Thm}
\begin{proof}
It follows from Proposition~\ref{p:restriction} that $\tau(f)$ is a well-defined functional.  Moreover, from basic properties of integration $\tau(f^*) = \int \ov f|_{\mathscr G\skel 0}d\mu = \ov{\tau(f)}$.  It follows from Lemma~\ref{l:positive} that $\tau(f\ast f^\ast)\geq 0$ since $(f\ast f^\ast)|_{\mathscr G\skel 0}\geq 0$.  It remains to prove that $\tau(f\ast g)=\tau(g\ast f)$ for $f,g\in \mathbb C\mathscr G$.

 Since $\mathbb C\mathscr G$ is spanned by the indicator functions $1_U$ with $U\in \Gamma_c(\mathscr G)$ and $1_U\ast 1_V= 1_{UV}$, $1_V\ast 1_U = 1_{VU}$ for $U,V\in \Gamma_c(\mathscr G)$, it suffices to show that $\mu(UV\cap \mathscr G\skel 0) = \mu(VU\cap \mathscr G\skel 0)$ for all $U,V\in \Gamma_c(\mathscr G)$.  Notice that $x\in UV\cap \mathscr G\skel 0$ if and only if there exists $\gamma\in V$ with $\dom(\gamma)=x$ and $\gamma\inv \in U$.  That is $UV\cap \mathscr G\skel 0 = \dom(U\inv\cap V)$.  Similarly, $VU\cap \mathscr G\skel 0= \dom(V\inv \cap U) = \ran(U\inv \cap V)$.  But notice that $U\inv \cap V\in \Gamma(\mathscr G)$, being open and contained in a compact open bisection.  Therefore, by invariance of $\mu$, we have that
\begin{align*}
\mu(\dom(U\inv \cap V))&=\mu((U\inv\cap V)\inv (U\inv \cap V))\\ &=\mu((U\inv\cap V)(U\inv \cap V)\inv)= \mu(\ran(U\inv \cap V)).
\end{align*}
 We may now conclude that $\tau$ is a trace.

 We next verify that $\tau$ is $\Gamma_c$-contractive.  Observe that \[\tau((f\ast1_U)\ast (f\ast 1_U)^\ast) = \tau(f\ast 1_{UU\inv}\ast f^\ast)=\tau(1_{UU\inv}\ast f^\ast\ast f).\] Putting $W=UU\inv$, we have that $1_W\ast g = g|_{\ran\inv(W)}$ for any $g\in \mathbb C\mathscr G$.  Therefore, $(1_W\ast f^\ast\ast f)|_{\mathscr G\skel 0} = (f^\ast\ast f)|_W$.  We conclude (using that $(f^\ast\ast f)|_{\mathscr G\skel 0}\geq 0$ by Lemma~\ref{l:positive}) that
 \begin{align*}
 \tau(f\ast1_U\ast(f\ast 1_U)^\ast)&=\int_{W} (f^\ast \ast f)|_{\mathscr G\skel 0}d\mu \leq \int(f^\ast \ast f)|_{\mathscr G\skel 0}d\mu =\tau(f^\ast \ast f)\\ &=\tau(f\ast f^\ast),
 \end{align*}
 whence $\tau$ is $\Gamma_c(\mathscr G)$-contractive.  The final statement is immediate from the definitions.
\end{proof}

The fact that we are using functions from $\mathbb C\mathscr G$ allows us to avoid any integrability issues when dealing with $\mathscr G\skel 0$ noncompact and with $\mathscr G$ non-Hausdorff.

Following the terminology of~\cite{lawsonetalmean,starlingmean} an invariant mean on $\mathscr G\skel 0$ is called \emph{faithful} if $\mu(U)=0$ implies $U=\emptyset$ for all $U\in \Gamma_c(\mathscr G\skel 0)$.   Equivalently, the mean $\mu$ is faithful if and only if the extension of $\mu$ to a Radon measure is positive on any nonempty open subset.  We shall need a stronger notion to deal with non-Hausdorff groupoids. Let $\mu$ be an invariant mean on $\mathscr G\skel 0$, which we then extend uniquely to an invariant measure.  We say that $\mu$ is \emph{strongly faithful} if $\mu(\ran(\supp(f)))>0$ for each nonzero function $f\in \mathbb C\mathscr G$. 

 We recall here the definition of the singular ideal $J_{\mathbb C}(\mathscr G)$ of $\mathbb C\mathscr G$~\cite{nonhausdorffsimple,simplicity,BGHL25}: \[J_{\mathbb C}(\mathscr G)=\{f\in \mathbb C\mathscr G\mid \supp(f)\ \text{has empty interior}\}.\]   If $\mathscr G$ is Hausdorff, then $J_{\mathbb C}(\mathscr G)=0$ as $\supp(f)$ is open for all $f\in \mathbb C\mathscr G$ because $f$ is locally constant.

\begin{Prop}\label{p:strongly.faithful}
Let $\mathscr G$ be an ample groupoid.  Then any strongly faithful mean on $\mathscr G\skel 0$ is faithful.  If $\mathscr G$ is Hausdorff, or more generally if $J_{\mathbb C}(\mathscr G)=0$, then any faithful mean on $\mathscr G\skel 0$ is strongly faithful.
\end{Prop}
\begin{proof}
If $U\subseteq \mathscr G\skel 0$ is compact open, then $\ran(\supp(1_U))=U$.  Therefore, any strongly faithful mean is faithful.  Suppose now that $J_{\mathbb C}(\mathscr G)=0$ and that $\mu$ is a faithful invariant mean.  If $0\neq f\in \mathbb C\mathscr G$, then $\ran(\supp(f))$ has nonempty interior by~\cite[Lemma~4.1]{BGHL25} (for the Hausdorff case Lemma~\ref{l:positive} applies), and hence $\mu(\ran(\supp(f)))>0$.
\end{proof}

We now establish connections between faithful invariant means, strongly faithful invariant means and faithful traces.

\begin{Thm}\label{t:faithful.trace.gives.mean}
Let $\mathscr G$ be an ample groupoid.  Consider the statements:
\begin{enumerate}
  \item There is a faithful invariant mean on $\mathscr G\skel 0$;
  \item There is a strongly faithful invariant mean on $\mathscr G\skel 0$;
  \item $\mathbb C\mathscr G$ admits a faithful $\Gamma_c(\mathscr G)$-contractive trace;
   \item $\mathbb C\mathscr G$ admits a faithful trace.
\end{enumerate}
Then $(2)\implies (3)\implies (4)\implies (1)$.  If $\mathscr G$ is Hausdorff, or more generally if the singular ideal $J_{\mathbb C}(\mathscr G)$ vanishes, then all four statements are equivalent.
\end{Thm}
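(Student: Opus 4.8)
The plan is to establish the cycle $(2)\implies(3)\implies(4)\implies(1)$ and then, in the Hausdorff case, invoke Proposition~\ref{p:strongly.faithful} to obtain $(1)\implies(2)$ and close the loop. The implication $(3)\implies(4)$ is immediate: a faithful $\Gamma_c(\mathscr G)$-contractive trace is in particular a faithful trace, so one simply forgets the contractivity.

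For $(2)\implies(3)$, I would start from a strongly faithful invariant mean $\mu$, extend it to an invariant measure via Lemma~\ref{l:extension}, and form the trace $\tau(f)=\int f|_{\mathscr G\skel 0}\,d\mu$ of Theorem~\ref{t:traces}. The two things left to check are faithfulness and $\Gamma_c(\mathscr G)$-contractivity, both of which I would extract from the pointwise formula $(f\ast f^\ast)(x)=\sum_{\ran(\gamma)=x}|f(\gamma)|^2$ of Lemma~\ref{l:positive}. For faithfulness, the integrand $(f\ast f^\ast)|_{\mathscr G\skel 0}$ is a nonnegative simple function (Proposition~\ref{p:restriction}) supported exactly on $\ran(\supp(f))$, hence bounded below by its least nonzero value $m>0$ on that set; thus $\tau(f\ast f^\ast)\geq m\,\mu(\ran(\supp(f)))$, which is strictly positive when $f\neq0$ by strong faithfulness, giving (T4). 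For contractivity, I would use that $(f\ast 1_U)\ast(f\ast 1_U)^\ast=f\ast 1_{UU\inv}\ast f^\ast$, where $1_{UU\inv}$ is the indicator of the compact open subset $W=\ran(U)$ of the unit space; convolving $f$ on the right by $1_W$ merely restricts $f$ to arrows whose domain lies in $W$, so that $((f\ast 1_U)\ast(f\ast 1_U)^\ast)(x)=\sum_{\ran(\gamma)=x,\ \dom(\gamma)\in W}|f(\gamma)|^2\leq (f\ast f^\ast)(x)$ pointwise on $\mathscr G\skel 0$. Integrating yields $\tau((f\ast 1_U)(f\ast 1_U)^\ast)\leq\tau(f\ast f^\ast)$, which is exactly $\Gamma_c(\mathscr G)$-contractivity.

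For $(4)\implies(1)$, I would define $\mu(U)=\tau(1_U)$ for $U\in\Gamma_c(\mathscr G\skel 0)$. Since such $1_U$ is a projection, $\mu(U)=\tau(1_U1_U^\ast)$ is a nonnegative real number by (T3) and (T2); finite additivity is immediate from $1_{U\cup V}=1_U+1_V$ for disjoint $U,V$ together with linearity of $\tau$. Invariance follows from the trace identity (T1), namely $\mu(U\inv U)=\tau(1_U^\ast\ast 1_U)=\tau(1_U\ast 1_U^\ast)=\mu(UU\inv)$ for every $U\in\Gamma_c(\mathscr G)$, using $1_{U\inv U}=1_U^\ast\ast 1_U$ and $1_{UU\inv}=1_U\ast 1_U^\ast$. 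Faithfulness of $\mu$ is precisely (T4) applied to the projection $1_U$: if $\mu(U)=\tau(1_U1_U^\ast)=0$ then $1_U=0$, i.e. $U=\emptyset$. This produces a faithful invariant mean.

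Finally, when $\mathscr G$ is Hausdorff, Proposition~\ref{p:strongly.faithful} gives that every faithful invariant mean is strongly faithful, i.e. $(1)\implies(2)$, so the three implications close into a cycle and all four statements become equivalent. The one genuinely delicate point is the faithfulness argument inside $(2)\implies(3)$ in the non-Hausdorff setting: there $f|_{\mathscr G\skel 0}$ and $(f\ast f^\ast)|_{\mathscr G\skel 0}$ are only Borel simple functions rather than compactly supported locally constant ones, so I must lean carefully on Proposition~\ref{p:restriction} and Lemma~\ref{l:positive} to know that $\ran(\supp(f))$ is a Borel set of finite measure on which the nonnegative integrand is bounded below, rather than reasoning as though its support were clopen.
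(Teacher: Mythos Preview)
Your proposal is correct and follows essentially the same approach as the paper's proof: the implications $(3)\Rightarrow(4)$, $(4)\Rightarrow(1)$, and the Hausdorff closure $(1)\Rightarrow(2)$ via Proposition~\ref{p:strongly.faithful} are handled identically, and for $(2)\Rightarrow(3)$ you invoke the same trace construction from Theorem~\ref{t:traces} with the same faithfulness argument via the minimum nonzero value of the simple function $(f\ast f^\ast)|_{\mathscr G\skel 0}$. The only cosmetic difference is in the contractivity step: the paper cycles using the trace property to write $\tau(f\ast 1_{UU\inv}\ast f^\ast)=\tau(1_{UU\inv}\ast f^\ast\ast f)=\int_W(f^\ast\ast f)|_{\mathscr G\skel 0}\,d\mu$ and bounds the domain of integration, whereas you establish the pointwise inequality $((f\ast 1_U)(f\ast 1_U)^\ast)(x)\leq (f\ast f^\ast)(x)$ directly and then integrate---both arguments are short and equally valid.
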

\begin{proof}
Trivially, (3) implies (4).
Suppose that $\tau$ is a faithful trace on $\mathbb C\mathscr G$.  We can define a faithful mean $\mu\colon \Gamma_c(\mathscr G\skel 0)\to [0,\infty)$ by $\mu(U) = \tau(1_U)$.  Note that since $1_U$ is a projection, $\mu(U)=\tau(1_U)\geq 0$ with equality if and only if $U=\emptyset$, as $\tau$ is faithful.  If $U,V\in \Gamma_c(\mathscr G\skel 0)$ are disjoint, then $1_{U\cup V} = 1_U+1_V$, and so $\mu(U\cup V)=\tau(1_{U\cup V}) = \tau(1_U)+\tau(1_V) = \mu(U)+\mu(V)$.  Thus $\tau$ is a mean.  Finally, if $U\in \Gamma_c(\mathscr G)$, then $\mu(U\inv U) = \tau(1_{U\inv U})=\tau(1_{U\inv}\ast 1_U)=\tau(1_U\ast 1_{U\inv})=\tau(1_{UU\inv})=\mu(UU\inv)$, and so $\mu$ is invariant.  This shows that (4) implies (1).  If $\mathscr G$ is Hausdorff, then (1) implies (2) by Proposition~\ref{p:strongly.faithful}.

Suppose now that $\mu$ is a strongly faithful mean on $\mathscr G\skel 0$ and extend $\mu$, as usual, to an invariant measure on $\mathscr G\skel 0$.  Then we can define a $\Gamma_c(\mathscr G)$-contractive trace $\tau$ on $\mathbb C\mathscr G$ as per Theorem~\ref{t:traces}.  We claim that $\tau$ is faithful.  Indeed, suppose $0\neq f\in \mathbb C\mathscr G$.  Then we  have that $(f\ast f^\ast)|_{\mathscr G\skel 0}> 0$ with support $\ran(\supp(f))\neq \emptyset$, which is Borel and has finite $\mu$-measure by Lemma~\ref{l:positive}.  Moreover, $(f\ast f^\ast)|_{\mathscr G\skel 0}$  is a simple function by Proposition~\ref{p:restriction}, and hence takes on only finitely many values. Let $m>0$ be the minimum nonzero value that $(f\ast f^\ast)|_{\mathscr G\skel 0}$ takes on.  Then
\begin{align*}
\tau(f\ast f^\ast)&=\int (f\ast f^\ast)|_{\mathscr G\skel 0}d\mu = \int_{\ran(\supp(f))}(f\ast f^\ast)|_{\mathscr G\skel 0}d\mu\\ &\geq \int_{\ran(\supp(f))} md\mu=m\mu(\ran(\supp(f)))>0
\end{align*}
 since $\mu$ is strongly faithful.  Thus $\tau$ is faithful.
\end{proof}

We remark that if $\mathscr G\skel 0$ is compact and $\mathscr G$ is Hausdorff, then (1)--(4) are also equivalent to the existence of a faithful trace on the reduced $C^*$-algebra of $\mathscr G$ by~\cite{starlingmean}.  If we just assume that $\mathscr G$ is Hausdorff, then there is still a faithful trace on the reduced $C^\ast$-algebra of $\mathscr G$ if there is a faithful invariant mean on $\mathscr G\skel 0$ whose extension to an invariant measure is finite.  This is because there is a faithful conditional expectation $P\colon C^*_r(\mathscr G)\to C_0(\mathscr G\skel 0)$ (which on $C_c(\mathscr G)$, and hence $\mathbb C\mathscr G$, is just restriction) and therefore $\tau(f) = \int P(f)d\mu$ gives a faithful trace (faithful since $\mu$ is positive and $P$ is faithful, and a trace because $\mathbb C\mathscr G$ is dense in $C^*_r(\mathscr G)$, cf.~\cite{mygroupoidalgebra}).  I do not know if a strongly faithful invariant mean that extends to a finite measure induces a faithful trace on the reduced $C^*$-algebra in the non-Hausdorff setting.

We now arrive at a generalization of Kaplansky's theorem for group algebras in characteristic $0$~\cite{Kaplanskydirect}.

\begin{Cor}\label{c:stable.finite.strongly.faithful}
Let $\mathscr G$ be an ample groupoid and suppose that there is a strongly faithful invariant mean on $\mathscr G\skel 0$.  Then $K\mathscr G$ is stably finite for all integral domains $K$ of characteristic zero.
\end{Cor}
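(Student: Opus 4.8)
The plan is to reduce the statement to already-established machinery by a short chain of three results. The hypothesis is precisely statement (2) of Theorem~\ref{t:faithful.trace.gives.mean}, so the first step is to invoke the implication $(2)\implies(3)$ of that theorem to produce from the strongly faithful invariant mean a faithful $\Gamma_c(\mathscr G)$-contractive trace $\tau$ on $\mathbb C\mathscr G$.

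Next I would verify that $\mathbb C\mathscr G$, together with the spanning semigroup $\Gamma_c(\mathscr G)$, satisfies the hypotheses of Theorem~\ref{t:stably.finite}. Recall that the indicator functions $\{1_U\mid U\in\Gamma_c(\mathscr G)\}$ form a spanning semigroup for $\mathbb C\mathscr G$, and that this semigroup contains a set of local projections: for any finite subset $F\subseteq\mathbb C\mathscr G$ there is a compact open set $U\subseteq\mathscr G\skel 0$ with $F\subseteq 1_U\mathbb C\mathscr G\, 1_U$, and $1_U$ is a projection. Applying Theorem~\ref{t:stably.finite} with $A=\mathbb C\mathscr G$ and $S=\Gamma_c(\mathscr G)$ then yields that $\mathbb C\mathscr G$ is stably finite.

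Finally, I would pass from complex coefficients to an arbitrary integral domain $K$ of characteristic zero by Corollary~\ref{c:field.is.c}: since $\mathbb C\mathscr G$ is stably finite, that corollary immediately gives that $K\mathscr G$ is stably finite as well, completing the argument.

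There is no serious obstacle remaining, since the conceptual content---the Passman-style inner-product argument showing that a faithful contractive trace forces $\tau(e)>0$ on each nonzero idempotent and hence Dedekind finiteness---is entirely packaged inside Theorem~\ref{t:stably.finite}, and the amplification to matrix algebras is handled there via Propositions~\ref{p:rook.matrix} and~\ref{p:amplify.trace}. The only real thing to check is that the hypotheses of the three cited results line up, which is routine. If I had to flag a subtle point, it would be ensuring that the trace produced in the first step is genuinely $\Gamma_c(\mathscr G)$-contractive and not merely faithful, since that contractivity is exactly what Theorem~\ref{t:stably.finite} consumes; but this is guaranteed by the \emph{strong} faithfulness assumed in the hypothesis, which is precisely why statement (2), rather than the weaker (1), is the correct starting point (the equivalence $(1)\iff(2)$ being available only in the Hausdorff case).
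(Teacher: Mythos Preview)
Your proposal is correct and follows essentially the same route as the paper: invoke the implication $(2)\Rightarrow(3)$ of Theorem~\ref{t:faithful.trace.gives.mean} to obtain a faithful $\Gamma_c(\mathscr G)$-contractive trace on $\mathbb C\mathscr G$, apply Theorem~\ref{t:stably.finite} (using that $\Gamma_c(\mathscr G)$ contains local projections) to conclude $\mathbb C\mathscr G$ is stably finite, and then descend to an arbitrary characteristic-zero integral domain via Corollary~\ref{c:field.is.c}. The paper's proof merely reverses the order, first reducing to $K=\mathbb C$ and then applying the two theorems, but the logical content is identical.
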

\begin{proof}
By Corollary~\ref{c:field.is.c}, it suffices to handle the case $K=\mathbb C$. But $\mathbb C\mathscr G$ has a $\Gamma_c(\mathscr G)$-contractive faithful trace by Theorem~\ref{t:faithful.trace.gives.mean}, and so $\mathbb C\mathscr G$ is stably finite by Theorem~\ref{t:stably.finite}.
\end{proof}

The next theorem is inspired by the construction in~\cite{EasdownMunn}.

\begin{Thm}\label{t:finite.orbits.countableD}
Let $\mathscr G$ be an ample groupoid and suppose that there is a countable set $\mathcal O_1,\mathcal O_2,\ldots$ of finite orbits such that $\bigcup_{i\geq 1}\mathcal O_i$ is $\mathbb C$-dense.  Then there is a strongly faithful invariant mean on $\mathscr G\skel 0$ whose extension to an invariant measure is finite.  Hence $K\mathscr G$ is stably finite for any integral domain $K$ of characteristic $0$.
\end{Thm}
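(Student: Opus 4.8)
The plan is to build the required mean directly as a weighted sum of counting measures on the given finite orbits. For each $i\geq 1$, let $\nu_i$ denote the counting measure on the finite set $\mathcal O_i$, so $\nu_i(V)=|\mathcal O_i\cap V|$; since $\mathcal O_i$ is finite this is a finite measure of total mass $|\mathcal O_i|$. I would choose positive reals $c_i$ with $\sum_{i\geq 1}c_i|\mathcal O_i|<\infty$ (for instance $c_i=2^{-i}/|\mathcal O_i|$) and define $\mu$ on $\Gamma_c(\mathscr G\skel 0)$ by
\[\mu(V)=\sum_{i\geq 1}c_i\,|\mathcal O_i\cap V|.\]
Because $|\mathcal O_i\cap V|\leq |\mathcal O_i|$, the series converges and $\mu(V)\leq \sum_i c_i|\mathcal O_i|<\infty$; finite additivity and nonnegativity are immediate, so $\mu$ is a mean whose total mass is finite. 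By Lemma~\ref{l:extension} it then suffices to check that $\mu$ is invariant and that its extension is strongly faithful.

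Invariance is the heart of the argument. For $U\in \Gamma_c(\mathscr G)$ one has $U\inv U=\dom(U)$ and $UU\inv=\ran(U)$, so I must show $|\mathcal O_i\cap\dom(U)|=|\mathcal O_i\cap\ran(U)|$ for every $i$. Since $U$ is a bisection, $\dom|_U$ and $\ran|_U$ are bijections of $U$ onto $\dom(U)$ and $\ran(U)$, whence $\ran|_U\circ(\dom|_U)\inv$ is a bijection $\dom(U)\to\ran(U)$ sending a unit $y$ to $\ran(\gamma)$, where $\gamma\in U$ is the unique arrow with $\dom(\gamma)=y$. As $\gamma$ is an arrow from $y$ to $\ran(\gamma)$, these two units lie in a common orbit; hence this bijection carries $\mathcal O_i\cap\dom(U)$ onto $\mathcal O_i\cap\ran(U)$, and finiteness of $\mathcal O_i$ forces the cardinalities to agree. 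Summing against the $c_i$ gives $\mu(\dom(U))=\mu(\ran(U))$, so $\mu$ is invariant and extends uniquely to a finite invariant measure, which I continue to denote $\mu$.

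For strong faithfulness, fix $0\neq f\in\mathbb C\mathscr G$. By Lemma~\ref{l:positive}, $\ran(\supp(f))$ is a nonempty Borel set, and since $\bigcup_i\mathcal O_i$ is $\mathbb C$-dense there is some $x\in \mathcal O_j\cap\ran(\supp(f))$. The delicate point is that I only control $\mu$ on compact open sets, yet I need a positive lower bound on the Borel set $\ran(\supp(f))$; I would resolve this by passing through the single point $x$ using outer regularity. Indeed, $\mathscr G\skel 0$ has a basis of compact open sets, so any open $O\ni x$ contains a compact open $V$ with $x\in V$, whence $\mu(O)\geq\mu(V)\geq c_j|\mathcal O_j\cap V|\geq c_j$; taking the infimum over open $O\ni x$ and invoking outer regularity of the finite Radon measure $\mu$ yields $\mu(\{x\})\geq c_j>0$. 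Monotonicity then gives $\mu(\ran(\supp(f)))\geq\mu(\{x\})\geq c_j>0$, so $\mu$ is strongly faithful with finite extension, as required. Finally, the stable finiteness of $K\mathscr G$ for every integral domain $K$ of characteristic $0$ follows immediately from Corollary~\ref{c:stable.finite.strongly.faithful}.
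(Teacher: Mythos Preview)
Your proof is correct and follows essentially the same approach as the paper: both build $\mu$ as the weighted sum $\sum_i 2^{-i}|\mathcal O_i|^{-1}$ of counting measures on the $\mathcal O_i$, and both verify invariance via the bisection bijection $\dom(U)\to\ran(U)$ preserving orbits. The only difference is cosmetic: the paper defines $\mu$ directly as the Radon measure $\sum_{i\geq 1}2^{-i}|\mathcal O_i|^{-1}\sum_{x\in\mathcal O_i}\delta_x$, so that the formula $\mu(W)=\sum_i 2^{-i}|\mathcal O_i|^{-1}|W\cap\mathcal O_i|$ is available for every Borel $W$ and strong faithfulness is immediate, whereas you define $\mu$ first as a mean on compact opens, extend via Lemma~\ref{l:extension}, and then recover the point masses through outer regularity---a valid but slightly longer detour.
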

\begin{proof}
Let \[\mu =\sum_{i\geq 1}\frac{1}{2^i}\sum_{x\in \mathcal O_i}\frac{1}{|\mathcal O_i|}\delta_x\] where $\delta_x$ is the Dirac measure supported at $x$.  Then $\mu$ is a finite Radon measure on $\mathscr G\skel 0$.  It is invariant, because if $U\in \Gamma(\mathscr G)$, then \[|U\inv U\cap \mathcal O_i| = |\{\gamma\in U\mid \dom(\gamma)\in \mathcal O_i\}|   =|\{\gamma\in U\mid \ran(\gamma)\in \mathcal O_i\}| = |UU\inv\cap \mathcal O_i|.\]
But for any Borel subset of $\mathscr G\skel 0$, we have that $\mu(W) = \sum_{i\geq 1}\frac{|W\cap \mathcal O_i|}{2^i|O_i|}$, and so $\mu(UU\inv)=\mu(U\inv U)$.

To see that $\mu$ is strongly faithful, note that by definition of $\mathbb C$-density, if $0\neq f\in \mathbb C\mathscr G$, then $\ran(\supp(f))\cap \mathcal O_i\neq \emptyset$ for some $i\geq 1$, and so $\mu(\ran(\supp(f)))>0$.

The final statement follows from Corollary~\ref{c:stable.finite.strongly.faithful}.
\end{proof}

The above theorem, of course, recovers Kaplansky's theorem that group algebras are stably finite over fields of characteristic $0$.

Easdown and Munn~\cite{EasdownMunn} constructed a faithful trace on $\mathbb CS$ if $S$ is an inverse semigroup with countably many $\mathscr D$-classes and each $\mathscr D$-class contains finitely many idempotents.  Such inverse semigroups do not need to have Hausdorff groupoids.  So one cannot immediately deduce from their result that there is a strongly faithful invariant mean on $\wh{E(S)}$.  But the mean implicit in their proof does have this property and inspired Theorem~\ref{t:finite.orbits.countableD}, which of course implies their result and recovers the same trace.

\begin{Cor}\label{c:munn.easdown}
Let $S$ be an inverse semigroup with countably many $\mathscr D$-classes such that each $\mathscr D$-class contains finitely many idempotents.  Then $\mathbb CS$ admits a faithful trace.  In fact, there is a strongly faithful invariant mean on $\widehat{E(S)}$, and so $KS$ is stably finite for any integral domain of characteristic $0$.
\end{Cor}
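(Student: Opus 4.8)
The plan is to reduce everything to Theorem~\ref{t:finite.orbits.countableD} via the isomorphism $KS\cong K\mathscr G(S)$ with the universal groupoid, under the standard identification of $\mathscr G(S)\skel 0$ with $\widehat{E(S)}$. Thus it suffices to exhibit a countable family of finite orbits of $\mathscr G(S)$ whose union is $\mathbb C$-dense. Theorem~\ref{t:finite.orbits.countableD} will then produce a strongly faithful invariant mean on $\widehat{E(S)}$ whose associated invariant measure is finite, and hence (through Theorem~\ref{t:faithful.trace.gives.mean}) a faithful trace on $\mathbb C\mathscr G(S)\cong\mathbb CS$, together with stable finiteness of $K\mathscr G(S)\cong KS$ for every integral domain $K$ of characteristic $0$. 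All three assertions of the corollary are exactly the three outputs of that theorem.

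The family I would use is the collection of orbits of principal characters. Recall from the discussion of principal characters that the orbit of $\theta_e$ consists precisely of the principal characters $\theta_f$ with $f$ an idempotent $\mathscr D$-equivalent to $e$. Since every $\mathscr D$-class of an inverse semigroup contains idempotents (for instance $s^\ast s$ and $ss^\ast$) and each idempotent lies in a unique $\mathscr D$-class, these orbits are indexed by the $\mathscr D$-classes of $S$: choosing one idempotent $e_i$ from each $\mathscr D$-class $D_i$ gives orbits $\mathcal O_i$, the orbit of $\theta_{e_i}$, whose union is exactly the set of all principal characters of $S$.

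Now I would verify the two hypotheses. First, $\lvert\mathcal O_i\rvert$ equals the number of idempotents in $D_i$, which is finite by assumption, so each $\mathcal O_i$ is a finite orbit. Second, there are countably many $\mathscr D$-classes by assumption, so the family $\{\mathcal O_i\}_{i\geq 1}$ is countable. Finally, by Proposition~\ref{p:prin.k.dense} the principal characters are $K$-dense in $\mathscr G(S)\skel 0$, in particular $\mathbb C$-dense, and their union is precisely $\bigcup_{i\geq 1}\mathcal O_i$. Thus the hypotheses of Theorem~\ref{t:finite.orbits.countableD} are met, and the conclusions follow at once.

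I do not expect a genuine obstacle here: the substantive work has already been carried out in the orbit computation for principal characters and in Proposition~\ref{p:prin.k.dense}, so the corollary is essentially a matter of assembling these facts and invoking Theorem~\ref{t:finite.orbits.countableD}. The only point requiring a moment's care is the bookkeeping that the orbits of principal characters are indexed exactly by the $\mathscr D$-classes and together exhaust all principal characters, so that countability of $\mathscr D$-classes and finiteness of idempotents per $\mathscr D$-class translate directly into a countable family of \emph{finite} orbits with $\mathbb C$-dense union.
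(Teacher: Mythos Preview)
Your proposal is correct and follows exactly the paper's approach: the paper's proof is the single sentence that the orbits of principal characters form a countable set of $\mathbb C$-dense finite orbits, so Theorem~\ref{t:finite.orbits.countableD} applies. You have simply unpacked this sentence, correctly identifying the orbits with $\mathscr D$-classes, invoking Proposition~\ref{p:prin.k.dense} for $\mathbb C$-density, and noting that the finiteness and countability hypotheses transfer directly.
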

\begin{proof}
The orbits of principal characters form a countable set of $\mathbb C$-dense finite orbits and so Theorem~\ref{t:finite.orbits.countableD} applies.
\end{proof}

Note that in characteristic $0$, Corollary~\ref{c:Munn.result} can be deduced from Corollary~\ref{c:munn.easdown},  which does not directly appeal to Kaplansky's theorem, since $S$ is a direct limit of its finitely generated inverse subsemigroups and these will have a countable set  of $\mathscr D$-classes each of which contains finitely many idempotents.  Note that if $S$ in Corollary~\ref{c:munn.easdown} has a Hausdorff universal groupoid, then the reduced $C^\ast$-algebra of $S$ will have a faithful trace and hence is stably finite.  However, stable finiteness of $\mathbb CS$ for inverse semigroups with finitely many idempotents in each $\mathscr D$-class  cannot be reduced to the case of inverse semigroups with Hausdorff universal groupoids.  P.~V.~Silva and the author~\cite{silvasteinbergFP} have constructed  finitely presented inverse semigroups meeting the hypothesis of Corollary~\ref{c:Munn.result} (in fact, having finite $\mathscr R$-classes), but not having  Hausdorff universal groupoids.  Let us remark that if $S$ is finitely presented and does not have a Hausdorff universal groupoid, then it cannot be expressed as a direct limit of inverse semigroups with Hausdorff universal groupoids.  Indeed, suppose that $S\cong \varinjlim_{\alpha\in D}S_{\alpha}$ with $S$ finitely presented and the $S_{\alpha}$ having Hausdorff universal groupoids.  Let $X\subseteq S$ be a finite generating set such that $S$ is defined by finitely many relations $r_i=r'_i$, for $i=1,\ldots, n$, over the alphabet  $X$ (actually, any finite generating set will do by a standard argument).   For $\alpha\leq \beta$, let $\rho_{\alpha,\beta}\colon S_{\alpha}\to S_{\beta}$ be the map from the directed system and let $\pi_{\alpha}\colon S_{\alpha}\to S$ be the maps coming from $S\cong \varinjlim S_{\alpha}$.  Note that $S=\bigcup_{\alpha\in D}\pi_{\alpha}(S_{\alpha})$, and this union is directed.    Since $X$ is finite, we can find some $\alpha$ such that $X\subseteq \pi_{\alpha}(S_{\alpha})$.  Thus $\pi_{\beta}$ is surjective for all $\beta\geq \alpha$.  Fix for each $x\in X$ a pre-image $x'\in S_{\alpha}$ and let $X'=\{x'\mid x\in X\}$.   Let $FIS(X)$ be the free inverse semigroup on $X$ and let $\gamma\colon FIS(X)\to S_{\alpha}$ be the homomorphism induced by sending $x$ to $x'$ for $x\in X$.    Since $r_1,\ldots, r_n$ together with $r'_1,\ldots, r'_n$ form a finite set of elements, we can find $\alpha_0\geq \alpha$ such that $\rho_{\alpha,\alpha_0}(\gamma(r_i)) = \rho_{\alpha,\alpha_0}(\gamma(r_i'))$ for $i=1,\ldots, n$.  It then follows that $\pi_{\alpha_0}\colon S_{\alpha_0}\to S$ splits via a homomorphism $\psi\colon S\to S_{\alpha_0}$ with $\psi(x)=\rho_{\alpha,\alpha_0}(\gamma(x))$ for $x\in X$ since the elements $\rho_{\alpha,\alpha_0}(\gamma(x))$ with $x\in X$ satisfy the defining relations of $S$.  Suppose now that $s\in S$ and let  $s'=\psi(s)$.  Since $S_{\alpha_0}$ has a Hausdorff universal groupoid, we can find idempotents $e_1,\cdots, e_m\leq s'$ in $S_{\alpha_0}$ such that if $f\leq s'$ with $f$ an idempotent, then $f\leq e_i$ for some $i=1,\ldots, m$ by~\cite[Theorem~5.17]{mygroupoidalgebra}.  Then $f_i=\pi_{\alpha_0}(e_i)\leq s$, for $i=1,\ldots, n$ are idempotents.  Suppose that $f\in E(S)$ with $f\leq s$.  Then $fs=f$ and so $\psi(f)s'=\psi(f)\psi(s)=\psi(fs)=\psi(f)$, whence $\psi(f)\leq s'$.  Therefore, $\psi(f)\leq e_i$ for some $i$, and so $f\leq f_i$.  We deduce that the universal groupoid of $S$ is Hausdorff by~\cite[Theorem~5.17]{mygroupoidalgebra}.

Crabb and Munn~\cite{CrabbMunn} showed that any free inverse semigroup (or monoid) algebra has a faithful trace (without countability assumptions).
Since free inverse semigroups (monoids) are $E$-unitary, they have Hausdorff groupoids, and so Theorem~\ref{t:faithful.trace.gives.mean} also yields that there is a strongly faithful invariant mean on its spectrum.  Actually, the proof of Crabb and Munn essentially constructs an invariant mean, except that they only define it on a semiring of compact open subsets, rather than the whole generalized Boolean algebra.

We end this section with a result proving that if $\mathscr G$ is an ample groupoid with compact unit space, then stable finiteness of $K\mathscr G$ for some commutative ring $K$ implies that the existence of a normalized invariant mean, and hence the existence of a normalized  trace on the complex algebra of $\mathscr G$ and, in the case that $\mathscr G$ is Hausdorff, a tracial state on the reduced $C^*$-algebra of $\mathscr G$.  To do this, we need some $K$-theory and a result of Goodearl and Handelman~\cite{GoodearlHandelman}.

If $R$ is a unital ring, then $K_0(R)$ is the Grothendieck group for the commutative monoid of isomorphism classes of finitely generated projective $R$-modules under the operation of direct sum.  If we define $K_0(R)^+$ to be the submonoid of $K_0(R)$ generated by the classes $[P]$ of finitely generated projective modules $P$, then one has that $[P]=[Q]$ in $K_0(R)^+$ if and only if $P\oplus R^n\cong Q\oplus R^n$ for some $n\geq 0$, i.e., $P,Q$ are stably isomorphic.

Recall that a partially ordered abelian group $G$ is a group with a partial order $\leq$ that is translation-invariant.  The partial order is determined by its positive cone $G^+=\{g\in G\mid g\geq 0\}$, which is a submonoid satisfying $G\cap (-G)=\{0\}$.  Conversely, any submonoid $G^+$ satisfying this property is the positive cone for the partial order $g\leq h$ if $h-g\in G^+$.  A \emph{strong order unit} in $G$ is an element $u>0$ such that for any $x\in G$, there is a positive integer $n$ with $x\leq nu$.  For example, $1$ is a strong order unit for the additive group of real numbers. One says that $G$ is \emph{directed} if it directed as a poset.  This is well known to be equivalent to $G^+$ generating $G$ as a group. 

\begin{Prop}[Goodearl-Handelman]\label{p:stably.finite.ordered}
Let $R\neq 0$ be a stably finite ring.  Then $K_0(R)$ is a directed abelian group with positive cone $K_0(R)^+$.  Moreover, $[R]$ is a strong order unit in $K_0(R)$.
\end{Prop}

The proof of this in~\cite[Proposition~2.1]{GoodearlHandelman} is straightforward.  No nonzero finitely generated projective module is stably isomorphic to the trivial module because if $P\oplus R^n\cong R^n$ with $n\geq 1$ and $P\neq 0$, then there is a noninvertible surjective homomorphism $R^n\to R^n$ (with kernel $P$), and so $M_n(R)\cong \End_R(R^n)$ is not Dedekind-finite, a contradiction.  In particular, if $[P]=-[Q]$ with $P,Q$ finitely generated projectives, then $[P\oplus Q]=0$ and so $P,Q=0$ by the above. The strong order unit property follows because if $P$ is finitely generated projective, say generated by $n$ elements, then $P\oplus Q=R^n$ for some $n$-generated projective $Q$, and so $n[R]-[P]=[Q]\geq 0$.

If $(G,u)$ is a partially ordered abelian group with strong order unit $u$, then a \emph{state} on $(G,u)$ is an order-preserving additive homomorphism $f\colon G\to \mathbb R$ with $f(u)=1$.  The term ``functional'' is used in~\cite{GoodearlHandelman}; the term ``state'' appears in~\cite{Goodearlreg} and the majority of the literature.  The following crucial result is~\cite[Corollary~3.3]{GoodearlHandelman}.

\begin{Thm}[Goodearl-Handelman]\label{t:state.exists}
Let $G$ be a directed partially ordered abelian group with strong order unit $u$.  Then there exists a state on $(G,u)$.
\end{Thm}

We recall~\cite{mygroupoidalgebra} that $K\mathscr G$ is unital if and only if $\mathscr G\skel 0$ is compact for an ample groupoid $\mathscr G\skel 0$.

\begin{Thm}\label{t:exists.invariant.mean}
Let $\mathscr G$ be an ample groupoid with compact unit space.  Suppose that $K\mathscr G$ is stably finite for some commutative ring $K$.  Then there exists a normalized invariant mean on $\mathscr G\skel 0$.  Consequently, $\mathbb C\mathscr G$ admits a $\Gamma_c(\mathscr G)$-contractive normalized trace, and if $\mathscr G$ is Hausdorff, then  $C^*_r(\mathscr G)$ admits a tracial state.
\end{Thm}
\begin{proof}
Put $R=K\mathscr G$.  Then $K_0(R)$ is a directed partially ordered abelian group with strong order unit $[R]$ by Proposition~\ref{p:stably.finite.ordered}.  Therefore, there is a state $f$ on $(K_0(R),[R])$.  Define $\mu$ by $\mu(U) = f([R1_U])$ for $U\subseteq \mathscr G\skel 0$ compact open.  This makes sense because $1_U$ is idempotent, and so $R1_U$ is a finitely generated projective module.  Also, since $[R1_U]\in K_0(R)^+$, we have that $\mu(U)=f([R1_U])\geq 0$. If $U\cap V=\emptyset$, then $1_{U\cup V}= 1_U+1_V$ and $1_U\ast 1_V=1_V\ast 1_U=1_{U\cap V}=0$, and so $1_U,1_V$ are orthogonal idempotents.  Therefore, $R1_{U\cup V}\cong R1_U\oplus R1_V$, and so $\mu(U\cup V)=f(R1_{U\cup V}) = f([R1_U]+[R1_V])=f([R1_U])+f([R1_V])=\mu(U)+\mu(V)$, establishing that $\mu$ is a mean.
  Moreover, $\mu(\mathscr G\skel 0)=f([R])=1$, and so $\mu$ is normalized.  To check invariance, let $U\in \Gamma_c(\mathscr G)$.  Then $U\inv U$ and $UU\inv$ are compact open subsets of $\mathscr G\skel 0$ and $R1_{UU\inv}\cong R1_{U\inv U}$ as left $R$-modules via $f\mapsto f\ast 1_U$ (with inverse $g\mapsto g\ast 1_{U\inv}$) since $1_{UU\inv} = 1_U\ast 1_{U\inv}$ and $1_{U\inv U}=1_{U\inv}\ast 1_U$.  It follows that $\mu(UU\inv) = f([R1_{UU\inv}])=f([R1_{U\inv U}]) = \mu(U\inv U)$, and so $\mu$ is a normalized invariant mean.  We conclude that $\mathbb C\mathscr G$ admits a normalized trace by   Theorem~\ref{t:traces}.  The final statement follows from the results of Starling~\cite{starlingmean}, which show that a normalized invariant mean on $\mathscr G\skel 0$ yields a tracial state on $C^*_r(\mathscr G)$ when $\mathscr G$ is Hausdorff with compact unit space.
\end{proof}

It would be interesting to compare our work to that of Va\v{s} on traces for Leavitt path algebras over $\ast$-rings~\cite{Vasdirect}.  I believe that for the case of the field of complex numbers, her construction of so-called canonical traces is equivalent to constructing faithful invariant means on the associated gropoid.  I believe that a theory of traces of algebras of Hausdorff ample groupoids over arbitrary $\ast$-rings, generalizing the case of Leavitt path algebras~\cite{Vasdirect}, can be made to work because we only ever need to integrate locally constant functions against an invariant mean, and so likely one can make sense of this over arbitrary $\ast$-rings with an appropriate notion of positivity (as considered in~\cite{Vasdirect}).  For non-Hausdorff groupoids, we need to integrate Borel functions and extend the mean to a measure, and so it less clear to me that one can develop a theory over arbitrary $\ast$-rings.

\section*{Appendix: stable finiteness of algebras of stable semigroups}
This appendix generalizes Corollary~\ref{c:Munn.result} to a class of stable semigroups, extending the main result of~\cite{MunnDirect}.  The approach here does not use groupoids at all, but rather specialized techniques from semigroup theory, and so for this reason we have isolated it as an appendix.  We assume familiarity with Green's relations and preorders~\cite{Green} and structural semigroup theory as found in~\cite{CP,CP2} or~\cite[Appendix~A]{qtheor}.  We also follow the standard notation for equivalence classes of Green's relations.

An element  $s$ of a semigroup $S$ is (von Neumann) \emph{regular} if there exists $t\in S$ with $sts=s$. A semigroup is \emph{regular} if all its elements are regular. Inverse semigroups are precisely the regular semigroups with commuting idempotents.  The multiplicative semigroup of a von Neumann regular ring is, of course, von Neumann regular.

A semigroup $S$ is \emph{stable} if $s\J st$ implies $s\R st$ and $s\J ts$ implies $s\eL ts$.  Finite semigroups, as well as periodic semigroups and compact semigroups, are stable.  If $S$ is a regular semigroup such that each $\mathscr J$-class has either a minimal $\mathscr R$-class or a minimal $\mathscr L$-class, then $S$ is stable by the proofs of~\cite[Theorems~6.45 \& 6.48]{CP2}.  In a stable semigroup, Green's relations $\mathscr J$ and $\mathscr D$ coincide.  A stable semigroup does not contain a bicyclic monoid.  More on stable semigroups can be found in~\cite[Appendix A]{qtheor}.  In particular, if $J$ is a $\mathscr J$-class of a stable semigroup, then either $J^2\cap J=\emptyset$ or each element of $J$ is regular.  A $\mathscr J$-class of the former type is called \emph{null} and of the latter type is called \emph{regular}.  Each $\mathscr L$- and $\mathscr R$-class of a regular $\mathscr J$-class contains an idempotent.

Let $e$ be an idempotent of $S$.  Then  the maximal subgroup $H_e$, which is the $\mathscr H$-class of $e$,  acts freely on the right of $L_e$ and the orbits are exactly the $\mathscr H$-classes contained in $L_e$; these orbits are in bijection with the $\mathscr R$-classes of $D_e$.
Dually, $H_e$ acts freely on the left of $R_e$ and the orbits are described similarly.

With this preparation, we may now prove the main result of this appendix, generalizing~\cite{MunnDirect}.  Our proof is inspired by  Munn's with some improvements.

\begin{Thm}\label{t:munn.gen}
Let $R$ be a unital ring and $S$ a stable semigroup such that each regular $\mathscr J$-class of $S$ contains either finitely many $\mathscr L$-classes or finitely many $\mathscr R$-classes.  Then $RS$ is stably finite if and only if $RG$ is stably finite for each maximal subgroup $G$ of $S$. In particular, if $R$ is an integral domain of characteristic $0$ or if $R$ is locally residually Noetherian (e.g., $R$ is commutative) and each maximal subgroup of $S$ is sofic, then $RS$ is stably finite.
\end{Thm}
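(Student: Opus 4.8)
The forward implication is immediate. For each idempotent $e\in E(S)$ the maximal subgroup $G=H_e$ is a subsemigroup of $S$, so $RG$ is a (possibly non-unital) subring of $RS$ with the same multiplication; since subrings of stably finite rings are stably finite, stable finiteness of $RS$ forces that of each $RG$. So assume conversely that $RG$ is stably finite for every maximal subgroup $G$, and let me show directly that $M_n(RS)$ is Dedekind finite for all $n\ge 1$, which is exactly stable finiteness of $RS$. Suppose $UV=E$ with $E$ an idempotent of $M_n(RS)$ and $U,V\in EM_n(RS)E$; I must show $VU=E$. Set $F=E-VU$, which is an idempotent, and suppose for contradiction that $F\neq 0$.

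The engine is the Schützenberger representation, whose properties I recall from~\cite{CP2} and~\cite[Appendix~A]{qtheor}. Fix a regular $\mathscr J$-class $J$ with idempotent $e$ and maximal subgroup $G=H_e$. By the structure recalled just before Theorem~\ref{t:munn.gen}, the $\mathscr R$-class $R_e$ is a free left $H_e$-set whose orbits are its $\mathscr H$-classes, in bijection with the $\mathscr L$-classes of $J$; write $\Lambda$ for this index set. Right multiplication of $S$ on $R_e$, extended by $0$ whenever it leaves $R_e$, makes $R[R_e]$ a right $RS$-module that is free of rank $|\Lambda|$ over $RG$, yielding a homomorphism $\theta_J\colon RS\to \mathrm{End}_{RG}(R[R_e])\cong M_\Lambda(RG)$. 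Because $xs\in R_e$ forces $e\J xs\le_{\mathscr J}s$, one has $\theta_J(s)\neq 0$ only if $s\ge_{\mathscr J}J$. Dually, using $L_e$ one builds $\theta_J'\colon RS\to M_{I}(RG)$ indexed by the $\mathscr R$-classes $I$ of $J$. By hypothesis at least one of $\Lambda,I$ is finite; choosing the representation on the finite side, the target is an honest finite matrix ring over $RG$ (on the infinite side the image of an element need not be finitely supported, which is precisely why the finiteness hypothesis is needed). Since $RG$ is stably finite, this target is Dedekind finite, and so is its amplification $M_n(M_\Lambda(RG))\cong M_{n|\Lambda|}(RG)$ obtained by applying $\theta_J$ entrywise to $M_n(RS)$.

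The crux is a detection lemma: if $F$ is a nonzero idempotent of $M_n(RS)$, then some such amplified $\theta_J$ is nonzero on $F$. Let $J$ be $\le_{\mathscr J}$-maximal among the finitely many $\mathscr J$-classes meeting the (finite) union of the supports of the entries of $F$, and let $F_J\in M_n(R[J])$ collect the $J$-parts of those entries. A product of two elements of $J$ lies in $J$ or strictly below, and nothing $\le_{\mathscr J}$-below $J$ can multiply back up into $J$, so the $J$-part of $F^2$ equals the product of $F_J$ with itself computed in the principal factor. The principal factor of a regular $\mathscr J$-class is completely $0$-simple, hence a Rees matrix semigroup $\mathcal M^0(G;I,\Lambda;P)$, and its contracted algebra $\mathcal A_J$ carries the product $A\ast B=APB$; thus $F_J$ is an idempotent of $M_n(\mathcal A_J)$, i.e. $F_JPF_J=F_J$ in the evident block sense, with $F_J\neq 0$. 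In particular $J$ cannot be null, since a null principal factor has zero multiplication and hence no nonzero idempotents; so $J$ is regular and the construction above applies. Finally $\theta_J$ annihilates every support term outside $J$ (those classes satisfy $\not\ge_{\mathscr J}J$ by maximality), while on the $J$-part it is, up to the chosen convention, left multiplication by $P$; were $\theta_J(F)=0$ we would get $PF_J=0$ and then $F_J=F_JPF_J=F_J(PF_J)=0$, a contradiction. Hence $\theta_J(F)\neq 0$.

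Granting the lemma, the argument concludes exactly as in Theorem~\ref{t:finite.orbits.improved}: applying the amplified $\theta_J$ to $UV=E$ and $VU=E-F$ gives $\theta_J(U)\theta_J(V)=\theta_J(E)$ and $\theta_J(V)\theta_J(U)=\theta_J(E)-\theta_J(F)$ inside the corner $\theta_J(E)M_{n|\Lambda|}(RG)\theta_J(E)$; since $M_{n|\Lambda|}(RG)$ is Dedekind finite and $\theta_J(E)$ is an idempotent, the first relation forces $\theta_J(V)\theta_J(U)=\theta_J(E)$, whence $\theta_J(F)=0$, contradicting the lemma. Therefore $F=0$, and $RS$ is stably finite. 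The ``In particular'' clause then follows by feeding the known group-algebra results into this equivalence: when $R$ is an integral domain of characteristic $0$, each $RG$ is stably finite by Kaplansky~\cite{Kaplanskydirect} together with Corollary~\ref{c:field.is.c} (a group being an ample groupoid over a one-point unit space), and when $R$ is locally residually Noetherian with each maximal subgroup sofic, each $RG$ is stably finite by Theorem~\ref{t:soficnoeth}. I expect the detection lemma to be the main obstacle: the right Schützenberger representation is \emph{not} injective on $R[J]$ (its kernel is governed by the sandwich matrix $P$), so it is essential both to exploit the idempotency of $F_J$ to rule out $PF_J=0$ and to select the representation on whichever of the $\mathscr L$- or $\mathscr R$-side the hypothesis renders finite.
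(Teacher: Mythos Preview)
Your proposal is correct and follows essentially the same strategy as the paper: pick a maximal $\mathscr J$-class $J$ meeting the support of the obstructing idempotent, observe that $J$ must be regular, and use the Sch\"utzenberger representation on the side with finitely many Green classes to land in a matrix ring over $RG$, where the image of the idempotent is nonzero because it cannot annihilate itself in the principal factor. The only organizational difference is that you amplify $\theta_J$ entrywise to handle $M_n(RS)$ directly, whereas the paper first proves Dedekind finiteness of $RS$ and then promotes to stable finiteness via the $S\times B_n$ trick; your explicit sandwich-matrix computation $\theta_J(F_J)=PF_J$ and the implication $PF_J=0\Rightarrow F_J=F_JPF_J=0$ is exactly the paper's observation that $\pi(f)\in RJ$ cannot annihilate $RJ$, phrased in Rees-matrix coordinates.
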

\begin{proof}
If $G$ is a maximal subgroup of $S$, then $RG$ is a subring of $RS$ and so it must be stably finite if $RS$ is stably finite.  Suppose now that $RG$ is stably finite for each maximal subgroup $G$ of $S$.    We show that $RS$ is stably finite.  In fact, we first show that $RS$ is Dedekind finite and then explain afterwards how to obtain the general result from this.

 Let $e$ be an idempotent of $RS$ and $a,b\in eRSe$ such that $ab=e$ but $ba\neq e$.  Then $f=e-ba$ is a non-zero idempotent.  Choose a maximal $\mathscr J$-class $J$ intersecting the finite support of $f$ (in the usual ordering on $\mathscr J$-classes).  Let $I=\{s\in S\mid J\nleq_{\mathscr J} J_s\}$.  Then $I$ is an ideal of $S$ and hence spans an ideal $RI$ of $RS$.  Consider $A=RS/RI\cong R_0[S/I]$ where $R_0[S/I]$ is the contracted semigroup algebra of $S/I$ (i.e., the zeroes of $R$ and $R[S/I]$ are identified).  So $A$ has basis $S\setminus I$ (dropping the coset notation) and the multiplication is defined on the basis by the rule $s\cdot t=0$ if $st\in I$, and otherwise $s\cdot t=st$.   Let $\pi\colon RS\to A$ be the projection (which can be viewed as sending elements of $S\setminus I$ to themselves and elements of $I$ to $0$).  Note that $\pi(f)\neq 0$ by construction.  By maximality of $J$, we have that the support of $f$ is contained in $J\cup I$ and hence $\pi(f)\in RJ$. Since $\pi(f)^2=\pi(f)\neq 0$, we cannot have $J^2\subseteq I$ and so $J^2\cap J\neq \emptyset$ (as $J\cup I$ is an ideal of $S$).  Thus $J$ is a regular $\mathscr J$-class.

Without loss of generality (because we may replace $S$ by its opposite semigroup) we may assume that $J$ has finitely many $\mathscr R$-classes.   Notice that $RJ$ is a two-sided ideal of $A$ (since $I\cup J$ is an ideal of $S$).  Moreover, the stability of $S$ implies that $RL_f$ is a left ideal of $A$ for each idempotent $f\in J$.  Indeed, if $s\in S$ and $x\in L_f$, then $sx\in J\cup I$ and, by stability, $sx\in J$ if and only if $sx\in L_f$.  Thus in $A$,  $sx\in RL_f$ (possibly  $0$).  If $f'$ is another idempotent in $J$, then since $\mathscr J=\mathscr D$ on $S$ by stability, Green's lemma implies there exists $s\in S$ such that $x\mapsto xs$ a bijection from $L_f$ to $L_{f'}$.
But then right multiplication by $s$ yields a left $A$-module isomorphism $RL_f\to RL_{f'}$.  Since the $\mathscr L$-classes in $J$ are disjoint, each contains an idempotent (by regularity) and their union is $J$, we conclude that $RJ$ is the direct sum of the $RL_{f'}$ as $f'$ ranges over a set of idempotent representatives of the $\mathscr L$-classes of $J$, and this in turn is isomorphic to a direct sum of copies of $RL_f$ (of cardinality the number of $\mathscr L$-classes in $J$, which might be infinite).    In particular, if $a\in A$, then $aRJ=0$ if and only if $aRL_f=0$.

Suppose that $J$ has $n$ $\mathscr R$-classes. Then $H_f$ acts freely on the right of $L_f$ with $n$ orbits.  Thus $RL_f$ is a free right $RH_f$-module of rank $n$.  Moreover, the action of $A$ on the left of $RL_f$ commutes with the action of $RH_f$ on the right, and hence there is a corresponding homomorphism $A\to \mathrm{End}_{RH_f}(RL_f)$.  Composing with $\pi$ and using that $\mathrm{End}_{RH_F}(RL_f)\cong M_n(RH_f)$ (since $RL_f$ is free of rank $n$ over $RH_f$), we obtain a homomorphism $\rho\colon RS\to M_n(RH_f)$ (which is just the linear extension of the classical left Sch\"utzenberger representation by monomial matrices over $H_f$~\cite{CP}) with the property that $\rho(x)=0$ if and only if $\pi(x)RL_f=0$, if and only if $\pi(x)RJ=0$.  Note that since $\pi(f)\in RJ$ and $\pi(f)\pi(f)=\pi(f)\neq 0$, it follows $\pi(f)$ does not annihilate $RJ$.  Thus $0\neq \rho(f)=\rho(e)-\rho(b)\rho(a)$.  Therefore, $\rho(a)\rho(b)=\rho(e)$, $\rho(b)\rho(a)\neq \rho(e)$ and $\rho(a),\rho(b)\in \rho(e)M_n(RH_f)\rho(e)$.  This contradicts the hypothesis that $RH_f$ is stably finite.  Hence, we must have that $RS$ is Dedekind finite.

To obtain the general result, let $B_n$ be the semigroup of $n\times n$-matrix units, together with $0$; it is a finite inverse semigroup.   Note that $RB_n\cong M_n(R)\times R$ via the map sending $0$ to $(0,1)$ and $E_{ij}$ to $(E_{ij}, 1)$.  It follows that if $S$ is a semigroup, then $R[S\times B_n]\cong RS\otimes_R RB_n\cong RS\otimes_R (M_n(R)\times R)\cong M_n(RS)\times RS$.  Since Green's relations are computed coordinatewise and $B_n$ is finite,  if $S$ is stable and each regular $\mathscr J$-class of $S$ has either finitely many $\mathscr R$- or $\mathscr L$-classes, then the same is true for $S\times B_n$.  Moreover, since $B_n$ has only trivial maximal subgroups, the maximal subgroups of $S\times B_n$ are the same (up to isomorphism) as those of $S$.  Therefore, $R[S\times B_n]$ is Dedekind finite by the above, and so $M_n(RS)$ is Dedekind finite, being isomorphic to a subring.  This concludes the proof that $RS$ is stably finite.
\end{proof}

Munn proved the analogue of this result for Dedekind finiteness under the stronger assumption that either $R$ is a field of characteristic $0$ or $R$ is field and each maximal subgroup is abelian~\cite{MunnDirect}.

The following corollary generalizes Corollary~\ref{c:Munn.result} (where we note that, for an inverse semigroup, each $\mathscr J$-class having finitely many $\mathscr L$- or $\mathscr R$-classes is equivalent to each $\mathscr D$-class having finitely many idempotents).

\begin{Cor}
Let $R$ be a unital ring and $S$ a regular semigroup such that each $\mathscr J$-class of $S$ contains either finitely many $\mathscr L$-classes or finitely many $\mathscr R$-classes.  Then $RS$ is stably finite if and only if $RG$ is stably finite for each maximal subgroup $G$ of $S$.  In particular, if $R$ is an integral domain of characteristic $0$ or if $R$ is locally residually Noetherian (e.g., $R$ is commutative) and each maximal subgroup of $S$ is sofic, then $R$S is stably finite.
\end{Cor}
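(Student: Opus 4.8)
The plan is to deduce this corollary directly from Theorem~\ref{t:munn.gen}, whose conclusion is identical; the only thing to verify is that a regular semigroup satisfying the stated finiteness condition is automatically \emph{stable}, so that the theorem applies. Once stability is in hand, nothing further needs to be done beyond transcribing the conclusion and its two specializations.

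First I would record that, since $S$ is regular, every one of its $\mathscr J$-classes is regular: if $s = sts$ with $tst = t$, then $st$ is an idempotent that is $\mathscr R$-equivalent to $s$, so $J_s^2\cap J_s\neq\emptyset$ and no $\mathscr J$-class is null. In particular the hypothesis that each $\mathscr J$-class contains finitely many $\mathscr L$-classes or finitely many $\mathscr R$-classes is literally the finiteness hypothesis of Theorem~\ref{t:munn.gen} applied to the regular $\mathscr J$-classes (which here are all of them).

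Next I would establish stability from this finiteness. Fix a $\mathscr J$-class $J$. Restricted to the $\mathscr R$-classes contained in $J$, the preorder $\leq_{\mathscr R}$ is in fact a partial order: if $a,b\in J$ satisfy $a\leq_{\mathscr R}b$ and $b\leq_{\mathscr R}a$, then $aS^1=bS^1$, so $a\mathrel{\mathscr R}b$. Hence if $J$ has only finitely many $\mathscr R$-classes, this finite nonempty poset has a minimal element, i.e. $J$ has a minimal $\mathscr R$-class; dually, finitely many $\mathscr L$-classes yields a minimal $\mathscr L$-class. Thus every $\mathscr J$-class of $S$ has a minimal $\mathscr R$-class or a minimal $\mathscr L$-class, and by the fact recalled just before Theorem~\ref{t:munn.gen} (from the proofs of~\cite[Theorems~6.45 \& 6.48]{CP2}), the regular semigroup $S$ is stable.

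With $S$ now known to be stable and to satisfy the finiteness condition on its (necessarily regular) $\mathscr J$-classes, I would apply Theorem~\ref{t:munn.gen} to conclude that $RS$ is stably finite if and only if $RG$ is stably finite for each maximal subgroup $G$ of $S$. The final assertion then follows exactly as in that theorem: Kaplansky's result~\cite{Kaplanskydirect} gives stable finiteness of $RG$ when $R$ is an integral domain of characteristic $0$, and Theorem~\ref{t:soficnoeth} covers the case of a locally residually Noetherian base ring with sofic maximal subgroups. There is no genuine obstacle here; the single point demanding care is the observation that ``finitely many $\mathscr R$-classes'' produces a minimal $\mathscr R$-class, which hinges on $\leq_{\mathscr R}$ being an honest partial order within a single $\mathscr J$-class.
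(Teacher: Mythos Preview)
Your proposal is correct and follows essentially the same route as the paper: show that the finiteness hypothesis forces each $\mathscr J$-class to contain a minimal $\mathscr R$- or $\mathscr L$-class, invoke the stability criterion from~\cite[Theorems~6.45 \& 6.48]{CP2}, and then apply Theorem~\ref{t:munn.gen}. The paper's proof is terser---it simply asserts the existence of a minimal class---whereas you spell out why (the $\mathscr R$-preorder induces an honest partial order on the finitely many $\mathscr R$-classes inside a fixed $\mathscr J$-class), which is a welcome clarification but not a different argument.
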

\begin{proof}
These hypotheses imply that each $\mathscr J$-class of $S$ contains either a minimal $\mathscr L$-class or a minimal $\mathscr R$-class.  Therefore,  $S$ is stable by the proofs of~\cite[Theorems~6.45 \& 6.48]{CP2}.  Thus Theorem~\ref{t:munn.gen} applies.
\end{proof}

\def\malce{\mathbin{\hbox{$\bigcirc$\rlap{\kern-7.75pt\raise0,50pt\hbox{${\tt
  m}$}}}}}\def\cprime{$'$} \def\cprime{$'$} \def\cprime{$'$} \def\cprime{$'$}
  \def\cprime{$'$} \def\cprime{$'$} \def\cprime{$'$} \def\cprime{$'$}
  \def\cprime{$'$} \def\cprime{$'$}


\end{document}